\newtheorem{theorem}{Theorem}[section]
\newtheorem{corollary}[theorem]{Corollary}
\newtheorem{lemma}[theorem]{Lemma}
\newtheorem{proposition}[theorem]{Proposition}
\theoremstyle{definition}
\newtheorem{definition}[theorem]{Definition}
\theoremstyle{remark}
\newtheorem{remark}[theorem]{Remark}
\numberwithin{equation}{section}
\begin{document}

%opening
\title[Lefschetz fibrations via Stallings twist]{Lefschetz fibrations on knot surgery  $4$-manifolds via Stallings twist}
\author{Jongil Park}
\address{Department of Mathematical Sciences, Seoul National University, Seoul 151-747, \& Korea Institute for Advanced Study, Seoul 130-722, Republic of Korea}
\email{jipark@snu.ac.kr}
\author{Ki-Heon Yun}
\address{Department of Mathematics, Sungshin Women's University, 
  Seoul 136-742, Republic of Korea}
\email{kyun@sungshin.ac.kr}%

%\thanks{ }

\subjclass[2010]{57N13, 57R17, 53D35, 57M07}%
\keywords{Knot surgery $4$-manifold, Lefschetz fibration, mapping class group, monodromy factorization, Stallings twist}
\date{\today}

\begin{abstract}
 In this article we construct a family of knot surgery $4$-manifolds admitting arbitrarily many nonisomorphic Lefschetz fibration structures with the same genus fiber. We obtain such families by performing knot surgery on an elliptic surface $E(2)$ using connected sums of fibered knots obtained by Stallings twist from a slice knot $3_1 \sharp 3^*_1$. 
 By comparing their monodromy groups induced from the corresponding monodromy factorizations, we show that they admit mutually nonisomorphic Lefschetz fibration structures. 
\end{abstract}

\maketitle

%%%%%%%%%%%%%%%%%%%%%%%%%%%%%%%%%%%%%%%%%%
%%%%   Section
%%%%   Introduction
%%%%%%%%%%%%%%%%%%%%%%%%%%%%%%%%%%%%%%%%%%

\section{Introduction}\label{section:intro}

 Since it was known that any closed symplectic $4$-manifold admits a     Lefschetz pencil~\cite{Donaldson:99} and a Lefschetz fibration structure 
can be obtained from a Lefschetz pencil by blowing-up the base loci, 
a study on Lefschetz fibrations has become an important research theme to understand  symplectic $4$-manifolds topologically. 
In fact, Lefschetz pencils and Lefschetz fibrations have long been studied
extensively by algebraic geometers and topologists in complex category
and these notions are extended in symplectic category.
 It is also well known that an isomorphism class of Lefschetz fibrations over $S^2$ is completely characterized by {\em monodromy factorization}, an ordered sequence of right handed Dehn twists whose product becomes identity in the surface mapping class group corresponding to the generic fiber, up to Hurwitz equivalence and global conjugation equivalence.  
 Note that the Hurwitz equivalence problem of monodromy factorizations is one of very interesting but hard questions in topology. For example, people working on this area want to get an answer for the following question:
 
\begin{quote}
 Is the Hurwitz problem for mapping class group factorizations decidable? 
Are there interestting criteria which can be used to conclude that two given 
factorizations are equivalent, or inequivalent, 
up to Hurwitz moves and global conjugation? ~\cite{Auroux:2006}
\end{quote}

 On the other hand, since the gauge theory, in particular Seiberg-Witten theory, was introduced, topologists and geometers working on $4$-manifolds have developed various techniques and they have obtained many fruitful and remarkable results on $4$-manifolds topology in last 30 years. Among them, a knot surgery technique introduced by R.~Fintushel and R.~Stern turned out to be one of most effective techniques to modify smooth structures without changing the topological type of a given $4$-manifold.
  Note that Fintushel-Stern's knot surgery $4$-manifold $X_K$ is the following~\cite{FS:98}:
 Suppose that $X$ is a simply connected smooth $4$-manifold containing
an embedded torus $T$ of square $0$ and $\pi_1(X\setminus T) = 1$.
Then, for any knot $K \subset S^3$,
one can construct a new $4$-manifold, called {\em a knot surgery $4$-manifold},
 \begin{equation*}
  X_K = X\sharp_{T=T_m} (S^1\times M_K)
 \end{equation*}
by taking a fiber sum along a torus $T$ in $X$ and $T_m = S^1\times m$ in
\mbox{$S^1 \times M_K$}, where $M_K$ is the $3$-manifold obtained by doing
$0$-framed surgery along $K$ and $m$ is the meridian of $K$.
Then Fintushel and Stern proved that, under a mild condition on $X$
and $T$, the knot surgery $4$-manifold $X_K$ is homeomorphic,
but not diffeomorphic, to a given $X$.
 Furthermore, if $X$ is a simply connected elliptic surface $E(n)$,
$T$ is a generic elliptic fiber, and $K$ is a fibered knot in $S^3$,
then it is also known that the knot surgery $4$-manifold $E(n)_K$ admits
not only a symplectic structure but also a genus $2g(K)+n-1$ Lefschetz
fibration structure~\cite{FS:2004}.

 In this article we continue to investigate inequivalent Lefschetz fibration structures on the knot surgery $4$-manifold $E(2)_K$ and we answer the following question proposed by I. Smith~\cite{Smith:98}:

\begin{quotation}
 Does the diffeomorphism type of a smooth $4$-manifold determine the
 equivalence class of a Lefschetz fibration by curves of some given genus?
 \end{quotation}

\noindent
 Regarding this question, I. Smith first showed that $(T^2 \times \Sigma_2)\sharp 9\overline{\mathbb{CP}}^2$ admits two nonisomorphic genus $9$ Lefschetz fibrations by using the divisibility of the second integral cohomology group~\cite{Smith:98}.
We also studied Lefschetz fibration structures on $E(n)_K$ using several families of fibered knots $K$ such as $2$-bridge knots and Kanenobu knots, and we obtained some fruitful results on them such as a family of some knot surgery $4$-manifolds admit at least two nonisomorphic Lefschetz fibration structures~\cite{Park-Yun:2009, Park-Yun:2011}.

 In this article we show that, for each integer $n>0$, some $E(2)_K$ admit at least  $2^n$ nonisomorphic Lefschetz fibration structures, which is a significant extension of our previous result mentioned above. In order to find such examples, we first perform a knot surgery on $E(2)$ using a family of connected sums of fibered knots obtained by Stallings twist from a slice knot $3_1 \sharp 3^*_1$ and we consider the corresponding monodromy factorizations and monodromy groups. 
Then, using a graph method developed by S. Humphries~\cite{Humphries:79}, 
we prove that these monodromy groups (so the corresponding monodromy factorizations) are mutually distinct, which is a key step in the proof. By further investigation of these monodromy factorizations, we finally conclude that the corresponding Lefschetz fibration structures are mutually nonisomorphic to each other. The main result of this article is following:  
 
\begin{theorem}
For each integer $n > 0$ and $(m_1, m_2, \cdots, m_n) \in \mathbb{Z}^n$, 
a knot surgery $4$-manifold
\[
E(2)_{K_{m_1} \sharp K_{m_2} \sharp \cdots \sharp K_{m_n}}
\]
admits at least $2^n$ nonisomorphic genus $(4n+1)$ Lefschetz fibrations 
over $S^2$. Here $K_{m_i} (1 \leq i \leq n)$ denotes a fibered knot obtained by performing $|m_i|$ left/right handed full twist on a slice knot $K_0=3_1 \sharp 3^{*}_1$ as in Figure~\ref{fig:Kn}.
\end{theorem}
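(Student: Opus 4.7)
The plan is to exhibit, for each fixed $(m_1,\dots,m_n)\in\mathbb{Z}^n$, a family $\{W_\varepsilon\}_{\varepsilon\in\{0,1\}^n}$ of $2^n$ monodromy factorizations in the mapping class group $\mathrm{Mod}(\Sigma_{4n+1})$, each realizing a Lefschetz fibration on the same $4$-manifold $E(2)_{K_{m_1}\sharp\cdots\sharp K_{m_n}}$, and then to distinguish them pairwise up to Hurwitz equivalence and global conjugation.

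For the base factorization, note that $K_0=3_1\sharp 3_1^*$ is a fibered knot of Seifert genus $2$ and that Stallings twist preserves both fiberedness and the fiber genus; hence each $K_{m_i}$ is fibered of genus $2$, and the connected sum $K:=K_{m_1}\sharp\cdots\sharp K_{m_n}$ is fibered of genus $2n$. Applying the Fintushel-Stern theorem recalled above produces a genus $2g(K)+1=4n+1$ Lefschetz fibration $f_0$ on $E(2)_K$, whose monodromy factorization $W_0$ is obtained by splicing the elliptic Lefschetz factorization of $E(2)$ with Dehn twists encoding the bundle monodromy of the fiber surface of $K$ over $S^1$. Next, each Stallings twist turning $K_0$ into $K_{m_i}$ is realized by a $\pm 1/m_i$-surgery along an unknot $c_i\subset S^3$ lying in the complement of the Seifert surface. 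After the knot surgery, each $c_i$ becomes a curve in $E(2)_K$ which, by the simple connectivity of the complement of the torus $T$, bounds an embedded disk with an appropriate framing, so the full twist is realized by an ambient self-diffeomorphism of $E(2)_K$. For each subset $S\subseteq\{1,\dots,n\}$ with indicator $\varepsilon$, pulling back $f_0$ by the composite diffeomorphism obtained from twisting at the summands in $S$ produces a Lefschetz fibration $f_\varepsilon$ on the same $4$-manifold, whose monodromy factorization $W_\varepsilon$ differs from $W_0$ by prescribed local modifications at those summands.

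The main obstacle is proving that the $2^n$ factorizations $W_\varepsilon$ are pairwise inequivalent. The strategy is to work with the monodromy group $G_\varepsilon\subset\mathrm{Mod}(\Sigma_{4n+1})$ generated by the Dehn twists in $W_\varepsilon$: Hurwitz moves preserve $G_\varepsilon$ and global conjugation changes it only by conjugation, so it suffices to prove the $G_\varepsilon$ pairwise non-conjugate in $\mathrm{Mod}(\Sigma_{4n+1})$. Following Humphries, I would encode the geometric intersection data of the vanishing cycles of $W_\varepsilon$ as a weighted graph $\Gamma_\varepsilon$ and extract combinatorial conjugacy invariants of $G_\varepsilon$ from $\Gamma_\varepsilon$ (such as valence sequences or distinguished induced subgraphs) that depend measurably on $\varepsilon$. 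The hardest step is verifying that the local graph modification at each summand cannot be erased by any combination of Hurwitz moves and conjugation, that is, that the natural $n$-fold product structure on $\{W_\varepsilon\}$ refines into $2^n$ genuinely distinct conjugacy classes of subgroups; this is exactly where Humphries' graph method must be adapted to the specific vanishing cycle configurations produced by connected sums of Stallings-twisted knots. Once non-conjugacy of the $G_\varepsilon$ is established, the $2^n$ Lefschetz fibrations $f_\varepsilon$ on $E(2)_K$ are pairwise nonisomorphic and the theorem follows.
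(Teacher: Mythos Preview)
Your construction of the $2^n$ factorizations does not work as stated. If $\psi$ is a self-diffeomorphism of the total space $E(2)_K$, then $f_\varepsilon := f_0\circ\psi$ is \emph{isomorphic} to $f_0$ as a Lefschetz fibration (take $H=\psi^{-1}$ and $h=\mathrm{id}$ in the commuting square). So pulling back by ambient diffeomorphisms cannot produce nonisomorphic fibrations, regardless of whether the Stallings curves bound framed disks. The paper's construction is quite different and is the real starting point: the Stallings twist on $K_0$ can be performed along either of two distinct curves $c$ and $d$ on the fiber surface, so for any decomposition $m_i=p_i+q_i$ one has $K_{m_i}\cong K_{p_i,q_i}$ as knots but with genuinely different fiber-surface monodromies $\phi_{K_{p_i,q_i}}$. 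Plugging these into the Fintushel--Stern factorization $\Phi_{p_1,q_1,\dots,p_n,q_n}(\eta_{1,2n}^2)\cdot\eta_{1,2n}^2$ gives, for each of the $2^n$ parity patterns of $(p_i,q_i)$, a distinct explicit factorization on the \emph{same} $4$-manifold, all written with respect to a single fixed generic fiber $F$.

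Your account of Humphries' method is also off target. One does not compare intersection graphs of vanishing cycles via valence sequences. Instead, for each parity vector one selects a carefully chosen basis $\mathcal{B}$ of $H_1(\Sigma_{4n+1};\mathbb{Z}_2)$, forms the associated graph $\Gamma(\mathcal{B})$, and uses Humphries' mod-$2$ Euler number $\chi_\Gamma$: the subgroup $G_\Gamma$ generated by Dehn twists along curves with $\chi_\Gamma=1$ is proper in the mapping class group, and twists along curves with $\chi_\Gamma=0$ lie outside it. The bases are engineered so that the monodromy group for parity $(p_1,q_1,\dots,p_n,q_n)$ lies inside $G_{\Gamma(\mathcal{B})}$, while for any other parity some explicit vanishing cycle $\Phi(B_j)$ has $\chi_\Gamma=0$, forcing its twist (and hence that monodromy group) outside $G_{\Gamma(\mathcal{B})}$. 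This yields \emph{inequality} of the monodromy groups as subgroups, not merely non-conjugacy; combined with the observation (made in the remark following Theorem~\ref{theorem:FS2004}) that for factorizations of this specific shape global conjugation is absorbed into the choice of $\phi_K$, inequality of $G_F$ already implies nonisomorphism of the fibrations. Your proposal identifies neither the correct source of the $2^n$ factorizations nor the actual mechanism by which Humphries' invariant separates them.
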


\begin{remark}
 Recently I. Baykur obtained a similar result for non-minimal cases. That is, he proved that, for any closed symplectic $4$-manifold $X$ which is not a rational or ruled surface and any integer $n>0$, there are $n$ nonisomorphic Lefschetz pencils of the same genus on a blowing-up of $X$ ~\cite{Baykur:2014}. 
\end{remark}

\subsection*{Acknowledgment}
 Part of this work was done during the authors' visit to the Max Planck Institute for Mathematics in Bonn. The authors wish to thank MPIM for hospitality and financial support.
 Jongil Park was supported by Leaders Research Grant funded by Seoul National University and by the National Research Foundation of Korea Grant (2010-0019516). He also holds a joint appointment at KIAS and in the Research Institute of Mathematics, SNU.
 Ki-Heon Yun was supported by the National Research Foundation of Korea Grant  (2012R1A1B4003427 and 2014K2A7A1043948).

%%%%%%%%%%%%%%%%%%%%%%%%%%%%%%%%%%%%%%%%%%%%%%%%%%%%
%%%%    Section
%%%%    Preliminary
%%%%%%%%%%%%%%%%%%%%%%%%%%%%%%%%%%%%%%%%%%%%%%%%%%%%

\section{Preliminaries}\label{section:prelim}

In this section we briefly review some basic facts about knot surgery $4$-manifolds, Lefschetz fibrations and monodromy factorizations, and Humphries' graph method on the mapping class group of surfaces for completeness. 

\subsection{Knot surgery $4$-manifold} 
\begin{definition}
 Let $X$ be a closed simply connected smooth $4$-manifold which contains
 an embedded torus $T$ of square $0$. Then, for any knot $K \subset S^3$,
 one can construct a new $4$-manifold $X_K$,
 called {\em a knot surgery $4$-manifold},
 \[ X_K= X\sharp_{T = T_m}  S^1 \times M_K  = [X\setminus (T\times D^2)]
    \cup [S^1 \times (S^3 \setminus N(K))] \]
 by taking a fiber sum along a torus $T$ in $X$ and
 $T_m = S^1\times m$ in \mbox{$S^1 \times M_K$}
 requiring that in the second expression the two pieces are glued together
 in such a way that the homology class $[pt \times \partial D^2]$
 is identified with $[pt \times l ]$,
 where $M_K$ is the $3$-manifold obtained by
 doing $0$-framed surgery along $K$, and $m$ and $l$ are the meridian and
 the longitude of $K$ respectively.
\end{definition}

\begin{theorem}[\cite{FS:98}]
 \label{thm:FS-knot}
 Suppose that $X$ is a smooth 4-manifold
 containing a $c$-embedded torus $T$ and
 $\pi_1(X) = 1 = \pi_1(X\setminus T)$.
 Then a knot surgery $4$-manifold $X_K$ is homeomorphic to $X$
 and the Seiberg-Witten invariant is given by
 \[\mathcal{SW}_{X_K} = \mathcal{SW}_X\cdot \Delta_K(t) ,\]
 where $t=exp(2[T])$ and $\Delta_K$ is the symmetrized Alexander
 polynomial of $K$.
\end{theorem}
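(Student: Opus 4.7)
The plan is to establish the homeomorphism statement and the Seiberg--Witten product formula separately.

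For the homeomorphism, I would appeal to Freedman's classification of simply connected closed topological $4$-manifolds. Since $X$ and $X_K$ are both smooth, their Kirby--Siebenmann invariants vanish, so it suffices to show that $X_K$ is simply connected and has the same Euler characteristic, signature, and intersection form as $X$. Simple connectivity follows from van Kampen applied to the decomposition
\[ X_K \;=\; (X\setminus \nu T)\;\cup_{T^3}\;\bigl(S^1\times(M_K\setminus \nu m)\bigr): \]
the hypothesis $\pi_1(X\setminus T)=1$ forces every loop on the gluing $3$-torus to be null-homotopic in the first piece, and carrying these nullhomotopies across the gluing kills both the $S^1$-factor and the meridian $m$ of $K$ in the second piece; since $m$ normally generates $\pi_1(S^3\setminus K)$, we obtain $\pi_1(X_K)=1$. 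Additivity of the Euler characteristic (each torus cap has $\chi=0$) and Novikov additivity of signature (the $S^1\times M_K$-part contributes zero as it is an oriented boundary) show $\chi$ and $\sigma$ are preserved. A Mayer--Vietoris calculation identifies the intersection forms: the new $H_2$-classes on the $S^1\times M_K$ side (a capped Seifert surface of $K$ times $S^1$, and the gluing torus itself), together with their Poincar\'e duals, form a hyperbolic summand that cancels against the corresponding summand appearing when one excises $\nu T$.

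For the Seiberg--Witten formula, the strategy is an induction on the unknotting number of $K$. The geometric input is that a single crossing change on $K$ is realized by $\pm 1$ surgery on an unknotted circle $c$ bounding a crossing disk, which in $X_K$ corresponds to a torus surgery on the nullhomologous Lagrangian torus $L=S^1\times c\subset S^1\times(S^3\setminus K)\subset X_K$. The three natural surgeries on $L$ produce the manifolds $X_{K_+}, X_{K_-}, X_{K_0}$ associated to a Conway skein triple for $K$. Applying a torus surgery formula for Seiberg--Witten invariants, which can be derived from the Morgan--Mrowka--Szab\'o gluing formula for fiber sums along tori together with the Meng--Taubes identification of the three-dimensional SW invariant of $M_K$ with $\Delta_K(t)$, yields the skein identity
\[ \mathcal{SW}_{X_{K_+}} \;-\; \mathcal{SW}_{X_{K_-}} \;=\; (t-t^{-1})\,\mathcal{SW}_{X_{K_0}}, \]
with $t=\exp(2[T])$. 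This matches the defining skein relation for the symmetrized Alexander polynomial. For the base case $K=U$ (the unknot), $M_U=S^1\times S^2$, so $X_U = X\#_{T=T_m}(T^2\times S^2)$ is diffeomorphic to $X$ (fiber sum with $T^2\times S^2$ along a product torus is a trivial cut-and-paste), and $\Delta_U(t)=1$. Inducting on the number of crossing changes needed to unknot $K$ and propagating the factor $\Delta_K(t)$ via the skein relation then yields the asserted product formula.

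The main obstacle is establishing the torus surgery formula for Seiberg--Witten invariants, which is the technical heart of Fintushel--Stern's argument. It requires a careful gluing analysis of monopole moduli spaces on cylindrical-end pieces $T^3\times\mathbb{R}$, in which one exploits that the reducible locus forms a flat torus controlled by $H^1(T^3)$. The $c$-embeddedness hypothesis on $T$ enters precisely here: it guarantees that a neighborhood of $T$ contains the cusp (nodal fiber) structure needed for the relative invariants to be computable and for spurious contributions from other basic classes to be excluded. Granting this analytic input, both the topological identification of the three surgered manifolds with $X_{K_\pm}, X_{K_0}$ and the final combinatorial induction are largely bookkeeping.
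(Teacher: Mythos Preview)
The paper does not give its own proof of this theorem: it is stated as a background result cited from \cite{FS:98}, so there is no argument in the text to compare against. Your outline is a faithful sketch of the original Fintushel--Stern strategy (Freedman classification for the homeomorphism; a skein induction driven by a torus-surgery formula for the Seiberg--Witten invariant), and you correctly identify the analytic gluing formula as the point where the real work lies and where the $c$-embedded hypothesis is used.
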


\begin{remark}
Let $E(2)$ be a simply connected elliptic surface with holomorphic Euler characteristic $2$ and no multiple fibers. Suppose that $T$ is a generic elliptic fiber of $E(2)$ and $K$ is any fibered knot in $S^3$. Then $E(2)_K$ admits not only a symplectic structure but also a Lefscetz fibration structure ~\cite{FS:2004}.
\end{remark}

\subsection{Lefschetz fibration}

\begin{definition}
 \label{defn:lefschetz}
 Let $X$ be a compact, oriented smooth 4-manifold. A
 Lefschetz fibration is a proper smooth map $\pi : X \to B$, where $B$
 is a compact connected oriented surface and $\pi^{-1}(\partial B) =
 \partial X$ such that
\begin{itemize}
 \item[(1)]  the set of critical points $C= \{p_1, p_2, \cdots, p_n\}$ of $\pi$
        is non-empty and lies in $int(X)$ and $\pi$ is injective on $C$
 \item[(2)]  for each $p_i$ and $b_i:=\pi(p_i)$, there are local complex
       coordinate charts agreeing with the orientations of $X$ and $B$ such that
        $\pi$ can be expressed as $\pi(z_1, z_2) = z_1^2 + z_2^2$.
\end{itemize}
\end{definition}

 Since each singular point in a Lefschetz fibration is related to a right-handed Dehn twist, if $X$ is a Lefschetz fibration over $S^2$ with generic fiber $F$, then it gives a sequence of right-handed Dehn twists 
whose product becomes the identity element in the mapping class group of $F$. 
This ordered sequence of right-handed Dehn twists is called 
\emph{monodromy factorization} of the Lefschetz fibration.
 Note that monodromy factorization is well defined up to Hurwitz equivalence and global conjugation equivalence~\cite{Kas:80,Matsumoto:96,GS:99}.

\begin{definition}
Two monodromy factorizations $W_1$ and $W_2$ are called \emph{Hurwitz equivalence} 
if $W_1$ can be changed to $W_2$ in finitely many steps of the following two operations:
\begin{itemize}
 \item[(1)] \emph{Hurwitz move:}
       $t_{c_n}\cdot ... \cdot t_{c_{i+1}} \cdot t_{c_i}
       \cdot ... \cdot t_{c_1} \sim  t_{c_n}\cdot ... \cdot t_{c_{i+1}}(t_{c_i})
       \cdot t_{ c_{i+1}} \cdot ... \cdot t_{c_1}$
 \item[(2)] \emph{inverse Hurwitz move:}
       $t_{c_n}\cdot ... \cdot t_{c_{i+1}} \cdot
       t_{ c_i} \cdot ... \cdot t_{c_1} \sim  t_{c_n}\cdot ...  \cdot t_{ c_{i}}
       \cdot t_{c_{i}}^{-1}(t_{c_{i+1}}) \cdot ... \cdot t_{c_1}$
\end{itemize}
and this relation comes from the choice of Hurwitz system, a set of arcs which connecting the base point $b_0$ to $b_i$.

 Furthermore, a choice of generic fiber also gives another equivalence relation. Two monodromy factorizations $W_1$ and $W_2$ are called 
\emph{global conjugation equivalence} 
if $W_2 = f(W_1)$ for some $f\in \mathcal{M}_g$, where $\Sigma_g$ is a generic fiber of the Lefschetz fibration $W_1$.
\end{definition}

\begin{definition}
 Two Lefschetz fibrations $f_1: X_1\to B_1$, $f_2:  X_2\to B_2$ are
 called \emph{isomorphic} if
 there are orientation preserving diffeomorphisms $H: X_1\to X_2$ and
 $h:B_1\to B_2$ such that the following diagram commutes:
 \begin{equation}
 \begin{CD}
    X_1  @>H>> X_2 \\
    @V{f_1}VV    @VV{f_2}V \\
    B_1 @>h>> B_2
 \end{CD}
 \end{equation}
\end{definition}

\begin{definition}\label{defn:monodromy_group}
Let $\pi : X \to S^2$ be a Lefschetz fibration and let $F$ be a fixed generic
fiber of the Lefschetz fibration. Let $W = w_n\cdot ... \cdot w_2 \cdot w_1$ be a monodromy factorization of the Lefschetz fibration corresponding to $F$.
Then \emph{monodromy group} $G_F(W) \subset \mathcal{M}_F$ is defined to be a subgroup of $\mathcal{M}_F$ generated by $\{w_1, w_2, \cdots, w_n\}$. 
\end{definition}

\begin{lemma}\label{lemma:mon-Group}
 If two monodromy factorizations $W_1$ and $W_2$ give isomorphic Lefschetz fibrations over $S^2$ with respect to the chosen generic fibers $F_1$ and $F_2$ respectively which are homeomorphic to $F$, then monodromy groups $G_{F_1}(W_1)$ and $G_{F_2}(W_2)$ are isomorphic as a subgroup of the mapping class group $\mathcal{M}_F$. Moreover if  a generic fiber $F = F_1 =F_2$ is fixed, then $G_F(W_1) = G_F(W_2)$.
\end{lemma}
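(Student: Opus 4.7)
The plan is to invoke the standard characterization that isomorphic Lefschetz fibrations over $S^2$ have monodromy factorizations related by a finite sequence of Hurwitz moves together with a single global conjugation, and then to track how each of these two operations acts on the subgroup of $\mathcal{M}_F$ generated by the factors.

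First I would check that a Hurwitz move leaves the monodromy group unchanged on the nose. The move replaces the consecutive pair $(t_{c_{i+1}}, t_{c_i})$ by $(t_{c_{i+1}} t_{c_i} t_{c_{i+1}}^{-1},\, t_{c_{i+1}})$, while keeping the other entries intact. The new generators are manifestly words in the old ones, and conversely $t_{c_i} = t_{c_{i+1}}^{-1}\bigl(t_{c_{i+1}} t_{c_i} t_{c_{i+1}}^{-1}\bigr) t_{c_{i+1}}$. Hence the subgroup generated by the factors is unaffected, and the same holds for the inverse Hurwitz move.

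Next I would analyze global conjugation. If $W_2 = \phi(W_1)$ for some $\phi \in \mathcal{M}_F$, then each factor is replaced by $\phi t_{c_i} \phi^{-1} = t_{\phi(c_i)}$, so $G_F(W_2) = \phi\, G_F(W_1)\, \phi^{-1}$, which is a conjugate (hence abstractly isomorphic) subgroup of $\mathcal{M}_F$. Combining the two observations, for the first claim I would identify $F_1$ and $F_2$ with $F$ via the diffeomorphism induced on fibers by the isomorphism $H$ of Definition~2.4; the resulting factorizations in $\mathcal{M}_F$ differ only by Hurwitz moves and a global conjugation, so their monodromy groups are conjugate in $\mathcal{M}_F$ and therefore isomorphic as abstract groups.

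For the second assertion I would argue that once the generic fiber $F = F_1 = F_2$ is fixed (i.e.\ a common basepoint in $S^2$ and a common identification of the fiber over it have been chosen), the global conjugation degree of freedom disappears: it arose precisely from the freedom to reidentify the reference fiber with an abstract model $\Sigma_g$, and fixing $F$ eliminates this choice. Hence any two monodromy factorizations of the isomorphic fibrations with respect to the same $F$ differ by Hurwitz moves alone, and the previous paragraph gives $G_F(W_1) = G_F(W_2)$. I expect the main point that requires care is this last step, namely justifying that fixing $F$ really does remove the global conjugation ambiguity; this reduces to the standard fact that two monodromy factorizations of the same Lefschetz fibration, obtained from different Hurwitz systems but with a common basepoint and a common identification of the reference fiber, are related purely by Hurwitz moves (cf.~\cite{Kas:80,Matsumoto:96,GS:99}).
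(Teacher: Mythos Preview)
Your proposal is correct and follows essentially the same approach as the paper. In fact the paper does not give a formal proof of this lemma at all: it states the result and follows it with a brief remark observing that global conjugation corresponds precisely to the choice of generic fiber, so that fixing $F$ removes this ambiguity and yields $G_F(W_1)=G_F(W_2)$; your argument spells out the same idea in full, including the (unstated in the paper but routine) verification that Hurwitz moves preserve the generated subgroup.
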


\begin{remark}
As we mentioned above, a role of global conjugation equivalence is the choice of a generic fiber.
If we use the same fixed generic fiber for $W_1$ and $W_2$, that is, $F_1 = F = F_2$, then the global conjugation cannot happen. Therefore we get $G_F(W_1) = G_F(W_2)$.
\end{remark}

 Note that monodromy factorizations of knot surgery $4$-manifolds $E(n)_K$ were originally studied by R.~Fintushel and R.~Stern~\cite{FS:2004} and the second author could also find an explicit monodromy factorization of $E(n)_K$ ~\cite{Yun:2008} with the help of a factorization of the identity element in the mapping class group which were discovered by Y. Matsumoto~\cite{Matsumoto:96}, M. Korkmaz~\cite{Korkmaz:2001} and Y. Gurtas~\cite{Gurtas:2004}.

\smallskip

 Let $M(n,g)$ be the desingularization of a double cover of $\Sigma_g \times S^2$ branched over $2n(\{pt.\}\times S^2 )\cup 2(\Sigma_g \times \{ pt.\})$.

\medskip

\begin{lemma}[\cite{Korkmaz:2001}] \label{lemma:Yun}
$M(2,g)$ has monodromy factorization $\eta_{1,g}^2$ with
\[
\eta_{1,g} = t_{B_0} \cdot t_{B_1} \cdot t_{B_2}\cdot \cdots \cdot t_{B_{2g}} 
\cdot t_{B_{2g+1}} \cdot t_{b_{g+1}}^2 \cdot t_{b_{g+1}'}^2 ,
\]
where  $B_j$, $b_{g+1}$, $b_{g+1}'$ are simple closed curves on $\Sigma_{2g+1}$ as in Figure~\ref{fig:generator}.
\end{lemma}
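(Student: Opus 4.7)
The plan is to exhibit $M(2,g)$ explicitly as a Lefschetz fibration over $S^2$ via the composition
\[
M(2,g)\;\longrightarrow\;\Sigma_g\times S^2\;\longrightarrow\;S^2,
\]
where the first arrow is the desingularized double cover and the second is the projection on the second factor. A generic fiber of the composition is a double cover of $\Sigma_g$ branched at the $4$ points $\{p_1,\ldots,p_4\}\times\{s\}$ cut out by the vertical part of the branch locus, so by Riemann--Hurwitz its genus equals $2g+1$, matching the claim. First I would check that this composition really is a Lefschetz fibration in the sense of Definition~\ref{defn:lefschetz}, noting that singular fibers can only occur over the two points $q_1,q_2\in S^2$ where $\Sigma_g\times\{q_j\}$ sits in the branch divisor, since elsewhere the branch locus meets each fiber transversely in $4$ reduced points.

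Next I would resolve the $4\cdot 2=8$ nodal intersections of the two branch components inside $\Sigma_g\times S^2$ by blowing them up, so that the resulting branch divisor is smooth and the double cover is well defined. Over each $q_j$, the blow-ups produce a chain of rational components together with the proper transform of $\Sigma_g\times\{q_j\}$, and the double cover of this configuration contributes precisely $2g+2$ simple vanishing cycles coming from the chain of spheres, together with two further non-reduced contributions coming from the proper transform. I would then identify these cycles on the generic fiber $\Sigma_{2g+1}$ with the curves $B_0,B_1,\ldots,B_{2g+1}$ and the two extra curves $b_{g+1},b_{g+1}'$ pictured in Figure~\ref{fig:generator}; the key local calculation is that a branch component of multiplicity two lifts to a local monodromy given by the \emph{square} of a Dehn twist, which accounts for the exponents on $t_{b_{g+1}}$ and $t_{b_{g+1}'}$.

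Putting the two singular fibers over $q_1$ and $q_2$ together yields the word $\eta_{1,g}\cdot\eta_{1,g}=\eta_{1,g}^2$, and the relation $\eta_{1,g}^2=1$ in $\mathcal{M}_{2g+1}$ follows automatically from the fact that the total monodromy around a loop bounding a disk in $S^2$ containing all critical values is trivial. The main obstacle is the third step, namely the explicit matching of the vanishing cycles with the curves drawn in Figure~\ref{fig:generator}: this requires writing down a local model near each blown-up node, tracing how a loop in the base encircles a critical value and then determining, on the double cover, which simple closed curve collapses. Once this local identification is in place, the global factorization assembles from the two bunches of $2g+4$ critical values coming from the two horizontal branch components, and the claim of the lemma follows exactly as in the argument of Korkmaz~\cite{Korkmaz:2001}.
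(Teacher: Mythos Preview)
The paper does not give its own proof of this lemma; it is quoted from Korkmaz~\cite{Korkmaz:2001} (with antecedents in Matsumoto~\cite{Matsumoto:96} for $g=1$) as a known input, so there is nothing in the paper to compare your argument against directly.

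That said, your outline is not the route Korkmaz takes. Korkmaz works in the opposite direction: he first establishes the relation $\eta_{1,g}^2=1$ in $\mathcal{M}_{2g+1}$ purely algebraically in the mapping class group (by manipulating known relations among Dehn twists, generalizing Matsumoto's genus-$2$ word), and only afterwards identifies the total space of the resulting Lefschetz fibration by computing its Euler characteristic and signature and matching them with those of $M(2,g)$. Your plan instead starts from the geometric definition of $M(2,g)$ as a desingularized double cover and tries to read off the vanishing cycles directly. This is a legitimate and more conceptual approach, and it explains \emph{why} the squared twists $t_{b_{g+1}}^2,\,t_{b_{g+1}'}^2$ appear (from the multiplicity-two horizontal branch component), something the algebraic approach leaves opaque.

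However, as you yourself flag, the heart of your argument---identifying the vanishing cycles with the specific curves $B_0,\ldots,B_{2g+1},b_{g+1},b_{g+1}'$ of Figure~\ref{fig:generator}---is not carried out, and this is genuinely the hard step. Your description of the singular fibers is also somewhat imprecise: after blowing up the eight nodes of the branch locus in $\Sigma_g\times S^2$, the fiber over $q_j$ is not a ``chain of rational components,'' and one must be careful about which curves get contracted after passing to the double cover and blowing down the $(-1)$-spheres coming from the exceptional divisors. Finally, note that in a Lefschetz fibration as in Definition~\ref{defn:lefschetz} each critical point has the standard nodal model, so the squared twists correspond to two distinct critical values each, giving $2g+6$ (not $2g+4$) critical values per copy of $\eta_{1,g}$. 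None of this is fatal, but filling in these details would amount to reproving Korkmaz's theorem by a different method rather than merely citing it.
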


%\vspace{-.5 em}

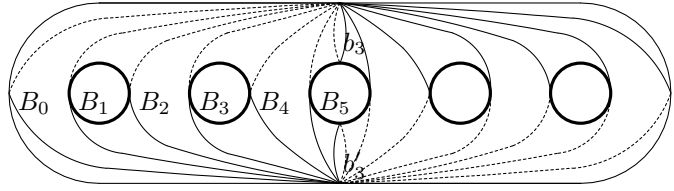
\begin{figure}[htbp]

\begin{center}

\begin{tikzpicture}[line cap=round,line join=round,>=triangle 45,x=2.0cm,y=2.0cm]
\clip(-0.093863288622,0.288605871722) rectangle (4.53308001838,1.70961467762);
\draw [line width=1.2pt] (0.6,1.) circle (0.4cm);
\draw [line width=1.2pt] (1.4,1.) circle (0.4cm);
\draw [line width=1.2pt] (2.2,1.) circle (0.4cm);
\draw (2.2,1.6)-- (0.6,1.6);
\draw [shift={(0.6,1.)}] plot[domain=1.57079632679:4.71238898038,variable=\t]({1.*0.6*cos(\t r)+0.*0.6*sin(\t r)},{0.*0.6*cos(\t r)+1.*0.6*sin(\t r)});
\draw (0.6,0.4)-- (2.2,0.4);
\draw [shift={(0.719736842105,1.26315789474)}] plot[domain=3.49212396048:4.62126274781,variable=\t]({1.*0.766337523188*cos(\t r)+0.*0.766337523188*sin(\t r)},{0.*0.766337523188*cos(\t r)+1.*0.766337523188*sin(\t r)});
\draw [shift={(3.775,36.875)}] plot[domain=4.62668875494:4.66923552698,variable=\t]({1.*36.5089886192*cos(\t r)+0.*36.5089886192*sin(\t r)},{0.*36.5089886192*cos(\t r)+1.*36.5089886192*sin(\t r)});
\draw [shift={(0.795,0.9925)}] plot[domain=3.12260759309:4.59823771333,variable=\t]({1.*0.395071196115*cos(\t r)+0.*0.395071196115*sin(\t r)},{0.*0.395071196115*cos(\t r)+1.*0.395071196115*sin(\t r)});
\draw [shift={(3.555,15.58)}] plot[domain=4.52728287323:4.62336273892,variable=\t]({1.*15.2403551468*cos(\t r)+0.*15.2403551468*sin(\t r)},{0.*15.2403551468*cos(\t r)+1.*15.2403551468*sin(\t r)});
\draw [shift={(1.21138385354,1.02955989538)}] plot[domain=3.21332414839:4.31036013076,variable=\t]({1.*0.412444496103*cos(\t r)+0.*0.412444496103*sin(\t r)},{0.*0.412444496103*cos(\t r)+1.*0.412444496103*sin(\t r)});
\draw [shift={(2.96590909091,6.69318181818)}] plot[domain=4.40537658781:4.59128000583,variable=\t]({1.*6.33961782225*cos(\t r)+0.*6.33961782225*sin(\t r)},{0.*6.33961782225*cos(\t r)+1.*6.33961782225*sin(\t r)});
\draw [shift={(1.58214285714,0.960714285714)}] plot[domain=3.03914880008:4.36123513778,variable=\t]({1.*0.384156908844*cos(\t r)+0.*0.384156908844*sin(\t r)},{0.*0.384156908844*cos(\t r)+1.*0.384156908844*sin(\t r)});
\draw [shift={(-2.875,-17.125)}] plot[domain=1.28892054902:1.33146732107,variable=\t]({1.*18.2450335708*cos(\t r)+0.*18.2450335708*sin(\t r)},{0.*18.2450335708*cos(\t r)+1.*18.2450335708*sin(\t r)});
\draw [shift={(2.39,1.27)}] plot[domain=3.47091882064:3.89310548708,variable=\t]({1.*0.83486525859*cos(\t r)+0.*0.83486525859*sin(\t r)},{0.*0.83486525859*cos(\t r)+1.*0.83486525859*sin(\t r)});
\draw [shift={(3.39,2.51)}] plot[domain=3.98540382175:4.19887516705,variable=\t]({1.*2.42243678968*cos(\t r)+0.*2.42243678968*sin(\t r)},{0.*2.42243678968*cos(\t r)+1.*2.42243678968*sin(\t r)});
\draw [shift={(2.90333333333,0.967777777778)}] plot[domain=3.10593741403:3.82074900194,variable=\t]({1.*0.903907839725*cos(\t r)+0.*0.903907839725*sin(\t r)},{0.*0.903907839725*cos(\t r)+1.*0.903907839725*sin(\t r)});
\draw [shift={(2.67502710352,0.6)}] plot[domain=2.74309061429:3.54009469289,variable=\t]({1.*0.515413182868*cos(\t r)+0.*0.515413182868*sin(\t r)},{0.*0.515413182868*cos(\t r)+1.*0.515413182868*sin(\t r)});
\draw [shift={(3.68026315789,0.736842105263)}] plot[domain=0.350531306885:1.47967009422,variable=\t]({1.*0.766337523188*cos(\t r)+0.*0.766337523188*sin(\t r)},{0.*0.766337523188*cos(\t r)+1.*0.766337523188*sin(\t r)});
\draw [shift={(0.625,-34.875)}] plot[domain=1.48509610135:1.52764287339,variable=\t]({1.*36.5089886192*cos(\t r)+0.*36.5089886192*sin(\t r)},{0.*36.5089886192*cos(\t r)+1.*36.5089886192*sin(\t r)});
\draw [shift={(3.605,1.0075)}] plot[domain=-0.0189850604988:1.45664505974,variable=\t]({1.*0.395071196115*cos(\t r)+0.*0.395071196115*sin(\t r)},{0.*0.395071196115*cos(\t r)+1.*0.395071196115*sin(\t r)});
\draw [shift={(0.845,-13.58)}] plot[domain=1.38569021964:1.48177008533,variable=\t]({1.*15.2403551468*cos(\t r)+0.*15.2403551468*sin(\t r)},{0.*15.2403551468*cos(\t r)+1.*15.2403551468*sin(\t r)});
\draw [shift={(3.18861614646,0.970440104616)}] plot[domain=0.0717314947969:1.16876747717,variable=\t]({1.*0.412444496103*cos(\t r)+0.*0.412444496103*sin(\t r)},{0.*0.412444496103*cos(\t r)+1.*0.412444496103*sin(\t r)});
\draw [shift={(1.43409090909,-4.69318181818)}] plot[domain=1.26378393422:1.44968735224,variable=\t]({1.*6.33961782225*cos(\t r)+0.*6.33961782225*sin(\t r)},{0.*6.33961782225*cos(\t r)+1.*6.33961782225*sin(\t r)});
\draw [shift={(2.81785714286,1.03928571429)}] plot[domain=-0.102443853507:1.21964248419,variable=\t]({1.*0.384156908844*cos(\t r)+0.*0.384156908844*sin(\t r)},{0.*0.384156908844*cos(\t r)+1.*0.384156908844*sin(\t r)});
\draw [shift={(7.275,19.125)}] plot[domain=4.43051320261:4.47305997466,variable=\t]({1.*18.2450335708*cos(\t r)+0.*18.2450335708*sin(\t r)},{0.*18.2450335708*cos(\t r)+1.*18.2450335708*sin(\t r)});
\draw [shift={(2.01,0.73)}] plot[domain=0.329326167048:0.751512833493,variable=\t]({1.*0.83486525859*cos(\t r)+0.*0.83486525859*sin(\t r)},{0.*0.83486525859*cos(\t r)+1.*0.83486525859*sin(\t r)});
\draw [shift={(1.01,-0.51)}] plot[domain=0.843811168163:1.05728251346,variable=\t]({1.*2.42243678968*cos(\t r)+0.*2.42243678968*sin(\t r)},{0.*2.42243678968*cos(\t r)+1.*2.42243678968*sin(\t r)});
\draw [shift={(1.49666666667,1.03222222222)}] plot[domain=-0.0356552395604:0.679156348354,variable=\t]({1.*0.903907839725*cos(\t r)+0.*0.903907839725*sin(\t r)},{0.*0.903907839725*cos(\t r)+1.*0.903907839725*sin(\t r)});
\draw [shift={(1.72497289648,1.4)}] plot[domain=-0.398502039303:0.398502039303,variable=\t]({1.*0.515413182868*cos(\t r)+0.*0.515413182868*sin(\t r)},{0.*0.515413182868*cos(\t r)+1.*0.515413182868*sin(\t r)});
\draw (3.8,1.6)-- (2.2,1.6);
\draw [shift={(0.719736842105,0.736842105263)},dash pattern=on 1pt off 1pt]  plot[domain=1.66192255937:2.7910613467,variable=\t]({1.*0.766337523188*cos(\t r)+0.*0.766337523188*sin(\t r)},{0.*0.766337523188*cos(\t r)+1.*0.766337523188*sin(\t r)});
\draw [shift={(3.775,-34.875)},dash pattern=on 1pt off 1pt]  plot[domain=1.6139497802:1.65649655224,variable=\t]({1.*36.5089886192*cos(\t r)+0.*36.5089886192*sin(\t r)},{0.*36.5089886192*cos(\t r)+1.*36.5089886192*sin(\t r)});
\draw [shift={(0.795,1.0075)},dash pattern=on 1pt off 1pt]  plot[domain=1.68494759385:3.16057771409,variable=\t]({1.*0.395071196115*cos(\t r)+0.*0.395071196115*sin(\t r)},{0.*0.395071196115*cos(\t r)+1.*0.395071196115*sin(\t r)});
\draw [shift={(3.555,-13.58)},dash pattern=on 1pt off 1pt]  plot[domain=1.65982256826:1.75590243395,variable=\t]({1.*15.2403551468*cos(\t r)+0.*15.2403551468*sin(\t r)},{0.*15.2403551468*cos(\t r)+1.*15.2403551468*sin(\t r)});
\draw [shift={(1.21138385354,0.970440104616)},dash pattern=on 1pt off 1pt]  plot[domain=1.97282517642:3.06986115879,variable=\t]({1.*0.412444496103*cos(\t r)+0.*0.412444496103*sin(\t r)},{0.*0.412444496103*cos(\t r)+1.*0.412444496103*sin(\t r)});
\draw [shift={(2.96590909091,-4.69318181818)},dash pattern=on 1pt off 1pt]  plot[domain=1.69190530135:1.87780871937,variable=\t]({1.*6.33961782225*cos(\t r)+0.*6.33961782225*sin(\t r)},{0.*6.33961782225*cos(\t r)+1.*6.33961782225*sin(\t r)});
\draw [shift={(1.58214285714,1.03928571429)},dash pattern=on 1pt off 1pt]  plot[domain=1.9219501694:3.2440365071,variable=\t]({1.*0.384156908844*cos(\t r)+0.*0.384156908844*sin(\t r)},{0.*0.384156908844*cos(\t r)+1.*0.384156908844*sin(\t r)});
\draw [shift={(-2.875,19.125)},dash pattern=on 1pt off 1pt]  plot[domain=4.95171798611:4.99426475816,variable=\t]({1.*18.2450335708*cos(\t r)+0.*18.2450335708*sin(\t r)},{0.*18.2450335708*cos(\t r)+1.*18.2450335708*sin(\t r)});
\draw [shift={(2.39,0.73)},dash pattern=on 1pt off 1pt]  plot[domain=2.3900798201:2.81226648654,variable=\t]({1.*0.83486525859*cos(\t r)+0.*0.83486525859*sin(\t r)},{0.*0.83486525859*cos(\t r)+1.*0.83486525859*sin(\t r)});
\draw [shift={(3.39,-0.51)},dash pattern=on 1pt off 1pt]  plot[domain=2.08431014013:2.29778148543,variable=\t]({1.*2.42243678968*cos(\t r)+0.*2.42243678968*sin(\t r)},{0.*2.42243678968*cos(\t r)+1.*2.42243678968*sin(\t r)});
\draw [shift={(2.90333333333,1.03222222222)},dash pattern=on 1pt off 1pt]  plot[domain=2.46243630524:3.17724789315,variable=\t]({1.*0.903907839725*cos(\t r)+0.*0.903907839725*sin(\t r)},{0.*0.903907839725*cos(\t r)+1.*0.903907839725*sin(\t r)});
\draw [shift={(2.67502710352,1.4)},dash pattern=on 1pt off 1pt]  plot[domain=2.74309061429:3.54009469289,variable=\t]({1.*0.515413182868*cos(\t r)+0.*0.515413182868*sin(\t r)},{0.*0.515413182868*cos(\t r)+1.*0.515413182868*sin(\t r)});
\draw (0.6,1.6)-- (2.2,1.6);
\draw [shift={(3.68026315789,1.26315789474)},dash pattern=on 1pt off 1pt]  plot[domain=4.80351521296:5.93265400029,variable=\t]({1.*0.766337523188*cos(\t r)+0.*0.766337523188*sin(\t r)},{0.*0.766337523188*cos(\t r)+1.*0.766337523188*sin(\t r)});
\draw [shift={(0.625,36.875)},dash pattern=on 1pt off 1pt]  plot[domain=4.75554243379:4.79808920583,variable=\t]({1.*36.5089886192*cos(\t r)+0.*36.5089886192*sin(\t r)},{0.*36.5089886192*cos(\t r)+1.*36.5089886192*sin(\t r)});
\draw [shift={(3.605,0.9925)},dash pattern=on 1pt off 1pt]  plot[domain=-1.45664505974:0.0189850604988,variable=\t]({1.*0.395071196115*cos(\t r)+0.*0.395071196115*sin(\t r)},{0.*0.395071196115*cos(\t r)+1.*0.395071196115*sin(\t r)});
\draw [shift={(0.845,15.58)},dash pattern=on 1pt off 1pt]  plot[domain=4.80141522185:4.89749508754,variable=\t]({1.*15.2403551468*cos(\t r)+0.*15.2403551468*sin(\t r)},{0.*15.2403551468*cos(\t r)+1.*15.2403551468*sin(\t r)});
\draw [shift={(3.18861614646,1.02955989538)},dash pattern=on 1pt off 1pt]  plot[domain=5.11441783001:6.21145381238,variable=\t]({1.*0.412444496103*cos(\t r)+0.*0.412444496103*sin(\t r)},{0.*0.412444496103*cos(\t r)+1.*0.412444496103*sin(\t r)});
\draw [shift={(1.43409090909,6.69318181818)},dash pattern=on 1pt off 1pt]  plot[domain=4.83349795494:5.01940137296,variable=\t]({1.*6.33961782225*cos(\t r)+0.*6.33961782225*sin(\t r)},{0.*6.33961782225*cos(\t r)+1.*6.33961782225*sin(\t r)});
\draw [shift={(2.81785714286,0.960714285714)},dash pattern=on 1pt off 1pt]  plot[domain=-1.21964248419:0.102443853507,variable=\t]({1.*0.384156908844*cos(\t r)+0.*0.384156908844*sin(\t r)},{0.*0.384156908844*cos(\t r)+1.*0.384156908844*sin(\t r)});
\draw [shift={(7.275,-17.125)},dash pattern=on 1pt off 1pt]  plot[domain=1.81012533252:1.85267210457,variable=\t]({1.*18.2450335708*cos(\t r)+0.*18.2450335708*sin(\t r)},{0.*18.2450335708*cos(\t r)+1.*18.2450335708*sin(\t r)});
\draw [shift={(2.01,1.27)},dash pattern=on 1pt off 1pt]  plot[domain=5.53167247369:5.95385914013,variable=\t]({1.*0.83486525859*cos(\t r)+0.*0.83486525859*sin(\t r)},{0.*0.83486525859*cos(\t r)+1.*0.83486525859*sin(\t r)});
\draw [shift={(1.01,2.51)},dash pattern=on 1pt off 1pt]  plot[domain=5.22590279372:5.43937413902,variable=\t]({1.*2.42243678968*cos(\t r)+0.*2.42243678968*sin(\t r)},{0.*2.42243678968*cos(\t r)+1.*2.42243678968*sin(\t r)});
\draw [shift={(1.49666666667,0.967777777778)},dash pattern=on 1pt off 1pt]  plot[domain=-0.679156348354:0.0356552395604,variable=\t]({1.*0.903907839725*cos(\t r)+0.*0.903907839725*sin(\t r)},{0.*0.903907839725*cos(\t r)+1.*0.903907839725*sin(\t r)});
\draw [shift={(1.72497289648,0.6)},dash pattern=on 1pt off 1pt]  plot[domain=-0.398502039303:0.398502039303,variable=\t]({1.*0.515413182868*cos(\t r)+0.*0.515413182868*sin(\t r)},{0.*0.515413182868*cos(\t r)+1.*0.515413182868*sin(\t r)});
\draw (2.2,0.4)-- (3.8,0.4);
\draw (3.8,0.4)-- (2.2,0.4);
\draw [line width=1.2pt] (3.,1.) circle (0.4cm);
\draw [line width=1.2pt] (3.8,1.) circle (0.4cm);
\draw [shift={(3.8,1.)}] plot[domain=-1.57079632679:1.57079632679,variable=\t]({1.*0.6*cos(\t r)+0.*0.6*sin(\t r)},{0.*0.6*cos(\t r)+1.*0.6*sin(\t r)});
\draw (-0.00143995165326,1.0684277774) node[anchor=north west] {$B_0$};
\draw (0.397135689025,1.0684277774) node[anchor=north west] {$B_1$};
\draw (0.801487788263,1.0684277774) node[anchor=north west] {$B_2$};
\draw (1.20006342894,1.0684277774) node[anchor=north west] {$B_3$};
\draw (1.59863906962,1.0684277774) node[anchor=north west] {$B_4$};
\draw (1.9972147103,1.0684277774) node[anchor=north west] {$B_5$};
\draw (2.15895554999,0.669852136719) node[anchor=north west] {$b_3'$};
\draw (2.15895554999,1.46700341807) node[anchor=north west] {$b_3$};
\end{tikzpicture}
\end{center}
\caption{Vanishing cycles on $\eta_{1,g}$ with $g=2$}\label{fig:generator}
\end{figure}

 Next, it is also well-known fact that a fibered knot or link can be obtained from an unknot by a sequence of Hopf plumbings, deplumbings and Stallings twists. Let $L\subset S^3$ be a fibered link, then it has a fibration structure over $S^1$ 
\[
S^3 \setminus N(L) \approx  (\Sigma_L \times [0,1])/_{(x, 1) \sim (\phi_L(x), 0)}.
\]
We denote this fibration structure by $(\Sigma_L, \phi_L)$.

\begin{lemma}[\cite{Harer:82}]
\label{lemma:Harer}
 Every fibered link in $S^3$ is related to the unknot
 by a sequence of Hopf plumbings, deplumbings and twistings.
 \begin{itemize}
 \item[(1)] If $(\Sigma_L, \phi_L)$ is a fibration structure of a fibered link $L$ and 
 $\Sigma_{L'}$ is a plumbing of positive(left handed) Hopf band $H^+$ and $\Sigma_L$, then $L'$ 
 has a fibration structure $(\Sigma_{L'}, t_c \circ \phi_L)$,
 where $c$ is the core circle of the Hopf band. 
 If we perform negative(right handed) Hopf band plumbing, 
 then the monodromy map becomes $t_c^{-1}\circ \phi_L$.
 \item[(2)] Let $(\Sigma_L, \phi_L)$ be a fibration structure of a fibered link $L$ 
 and suppose that there is an embedded circle $c$ in $\Sigma_L$ 
 which is unknotted in $S^3$ and $\mathrm{lk}(c, c^+) = 0$ where $c^+$ is a push off 
 of $c$ in the chosen positive direction of $\Sigma_L$. 
 Then a $(\pm 1)$-Dehn surgery along $c^+$ yield a new fibration structure
 $(\Sigma_{L'}, t_c^{\pm 1} \circ \phi_L)$ - This operation is called Stallings twist.
 \end{itemize}
\end{lemma}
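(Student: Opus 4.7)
The plan is to address the three claims of the lemma separately. For part (1), I would realize the Hopf band plumbing $\Sigma_{L'} = \Sigma_L \ast H^{\pm}$ as a Murasugi sum along a rectangular disk $D$ in which $\Sigma_L$ and $H^{\pm}$ meet transversally in a cross-shaped arc configuration. Stallings' theorem on the Murasugi sum then ensures that $\Sigma_{L'}$ is a fiber surface for its boundary link $L'$, with monodromy equal to the composition of the two monodromies, each extended by the identity outside its own half of the plumbing. Since the standard fibration of the positive Hopf band $H^{+}$ has monodromy $t_c$ along its core $c$ and that of $H^{-}$ has monodromy $t_c^{-1}$, this immediately yields $\phi_{L'} = t_c^{\pm 1} \circ \phi_L$ after embedding $c$ into $\Sigma_{L'}$.

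For part (2), the first task is to verify that the ambient $3$-manifold is unchanged. The hypothesis $\mathrm{lk}(c, c^+) = 0$ says exactly that the pushoff framing on $c^+$ coincides with the Seifert framing, and since $c$ (and therefore $c^+$) is unknotted in $S^3$, performing $(\pm 1)$-surgery on $c^+$ with respect to this framing is the classical $(\pm 1)$-surgery on an unknot and returns $S^3$. Geometrically, the surgery can be realized by cutting $S^3$ along a disk $\Delta$ with $\partial \Delta = c$ and regluing after a full twist of $\pm 2\pi$. This twist preserves $\Sigma_L$ as a set but alters its embedding in the new copy of $S^3$; inside a collar neighborhood of $c \subset \Sigma_L$, the fibration acquires one full twist, so $\phi_{L'} = t_c^{\pm 1} \circ \phi_L$.

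For the top-level statement that every fibered link is related to the unknot by a sequence of Hopf (de)plumbings and Stallings twists, I would follow Harer's inductive approach on a complexity invariant of the fibered link, for example the lexicographically ordered pair $(-\chi(\Sigma_L),\,|\partial \Sigma_L|)$. The base case is the unknot with its disk fiber. For the inductive step one seeks an essential simple closed curve in $\Sigma_L$ that is either (i) the cocore of a Hopf band attached to a subsurface, permitting a deplumbing that strictly reduces $-\chi(\Sigma_L)$, or (ii) an unknotted curve with zero self-linking whose Stallings twist simplifies the monodromy factorization so that a subsequent deplumbing becomes available. The existence of such a curve is extracted from the open book decomposition $(\Sigma_L, \phi_L)$ of $S^3$ via Kirby calculus on the associated framed link presentation. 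The main obstacle is precisely this last step: guaranteeing that the required curves always exist and that each admissible move strictly decreases the complexity requires the delicate surgery-theoretic manipulations that form the technical core of Harer's original argument. Parts (1) and (2), by contrast, are essentially routine local computations following from the definitions of Murasugi sum and of integer surgery along framed unknots.
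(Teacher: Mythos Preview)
The paper does not supply a proof of this lemma at all: it is quoted verbatim as a background result from Harer~\cite{Harer:82} (and, for the Stallings twist, ultimately from Stallings~\cite{Stallings:78}), and the authors use only its statement in what follows. So there is no ``paper's own proof'' against which to compare your proposal.

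That said, your sketch is a fair outline of the classical arguments. Parts~(1) and~(2) are indeed local computations: the Murasugi-sum description of Hopf plumbing and the surgery description of the Stallings twist are standard, and your treatment of both is correct in spirit. Your honest assessment of the global statement is also accurate: the claim that every fibered link is connected to the unknot by such moves is the genuine content of Harer's paper, and it relies on a Kirby-calculus analysis that you rightly identify as the nontrivial core rather than attempt to reproduce. One minor caution: your proposed complexity function and the existence of a simplifying curve are plausible heuristics, but Harer's actual argument does not proceed by a clean induction on $(-\chi(\Sigma_L), |\partial\Sigma_L|)$; it passes through framed-link presentations and handle moves in a way that is not easily summarized as ``find a good curve and reduce.'' If you ever need the full statement rather than just parts~(1)--(2), you should consult the original source directly.
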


\begin{theorem}[\cite{FS:2004}]\label{theorem:FS2004}
Let $K \subset S^3$ be a fibered knot of genus $g$. Then $E(2)_K$ has 
a monodromy factorization of the form
\[
\Phi_K (\eta_{1, g}^2) \cdot  \eta_{1, g}^2 ,
\]
where $\eta_{1, g}$ as in Lemma~\ref{lemma:Yun} and
\[
\Phi_K = \phi_K\oplus id \oplus id:
\Sigma_g \sharp \Sigma_{1} \sharp \Sigma_g \to \Sigma_g \sharp \Sigma_{1} \sharp \Sigma_g
\]
is an extension of a monodromy $\phi_K$ of the fibered knot $K$ such that
\[
S^3 \setminus N(K) = (\Sigma_g^1 \times [0,1])/ ((x, 1) \sim (\phi_K(x), 0))
\]
where $\Sigma_g^1$ is an oriented surface of genus $g\!=\! g(K)$
with one boundary component.
\end{theorem}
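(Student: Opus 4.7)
The plan is to derive the factorization by realizing the genus $2g+1$ Lefschetz fibration on $E(2)_K$ as a $\phi_K$-twisted variant of the symmetric Lefschetz fibration on $M(2,g)$. By Lemma~\ref{lemma:Yun}, $M(2,g)$ is a genus $2g+1$ Lefschetz fibration over $S^2$ with monodromy factorization $\eta_{1,g}^2 \cdot \eta_{1,g}^2$, constructed as the desingularized double branched cover of $\Sigma_g \times S^2$ along $4(\{pt\}\times S^2)\cup 2(\Sigma_g \times \{pt\})$. Its generic fiber inherits a connect-sum decomposition $\Sigma_g \sharp \Sigma_1 \sharp \Sigma_g$, in which the central $\Sigma_1$ is a lift of a vertical torus (an ``elliptic-style'' fiber) and the two outer $\Sigma_g$'s are the two preimages of a branching copy of $\Sigma_g\times\{pt\}$. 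When $g=0$, this specializes to $M(2,0)=E(2)$ with its standard horizontal genus-$1$ fibration, and $\eta_{1,0}^2\cdot\eta_{1,0}^2$ recovers the Matsumoto factorization on $E(2)$. This is the base model onto which the knot surgery will be imposed.

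Next, I would cut $M(2,g)$ along a regular fiber $F=\Sigma_g\sharp\Sigma_1\sharp\Sigma_g$ lying over the equator of the base $S^2$, splitting it as a union of two Lefschetz fibrations over $D^2_\pm$, each contributing the factor $\eta_{1,g}^2$ separately. The key geometric claim is then that, under the branched double cover map, the Fintushel--Stern elliptic fiber $T\subset E(2)=M(2,0)\subset M(2,g)$ corresponds to the central $\Sigma_1$-summand of $F$, and a tubular neighborhood $T\times D^2$ lifts to $F\times D^2_+$ sitting over a small disk in the base. Since $K$ is fibered with $S^3\setminus N(K)\cong (\Sigma_g^1\times[0,1])/(x,1)\sim(\phi_K(x),0)$, the Fintushel--Stern knot-surgery operation $T\times D^2\rightsquigarrow S^1\times(S^3\setminus N(K))$ lifts in the branched cover to a cut-and-reglue of the upper half of $M(2,g)$ along $F$ by the fiber self-diffeomorphism $\Phi_K=\phi_K\oplus\mathrm{id}\oplus\mathrm{id}$, where $\phi_K$ acts on the $\Sigma_g$-summand carrying the doubled Seifert surface of $K$ while the central elliptic $\Sigma_1$ and the other $\Sigma_g$ are left fixed.

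Granted this geometric identification, the monodromy factorization of $E(2)_K$ reads off immediately: the unaltered lower half still contributes $\eta_{1,g}^2$, while the upper half's vanishing cycles are all pushed forward by $\Phi_K$, producing $\Phi_K(\eta_{1,g}^2)$; concatenating yields $\Phi_K(\eta_{1,g}^2)\cdot\eta_{1,g}^2$. The main obstacle is the verification just above: one must check that the $T^3$-gluing prescribed by the Fintushel--Stern surgery (matching $[pt\times\partial D^2]$ with $[pt\times\ell]$) is taken, under the two-fold branched cover along $2(\Sigma_g\times\{pt\})\cup 4(\{pt\}\times S^2)$, precisely to the $\Phi_K$-twisted reidentification of $\partial(F\times D^2_+)$ with $\partial(F\times D^2_-)$, and no other twist is introduced. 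This is where one uses that the meridian $m$ and longitude $\ell$ of $K$ lift consistently in the branched cover to the elliptic factor $\Sigma_1$ of $F$, so that the $\phi_K$-monodromy of the Seifert fibration of $M_K$ is exactly what gets grafted onto the $\Sigma_g$ summand. Once this branched-cover bookkeeping is completed, the factorization $\Phi_K(\eta_{1,g}^2)\cdot\eta_{1,g}^2$ for $E(2)_K$ is an immediate consequence of Lemma~\ref{lemma:Yun}.
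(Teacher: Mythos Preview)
The paper does not give its own proof of this statement; it is quoted from \cite{FS:2004} (with the explicit factorization coming from \cite{Yun:2008}). The Remark immediately following the theorem does, however, record the underlying geometry: $E(2)_K$ is realized as a \emph{twisted fiber sum of two copies of $M(2,g)$}, and this is what produces the factorization $\Phi_K(\eta_{1,g}^2)\cdot\eta_{1,g}^2$.

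Your proposal misidentifies the building block. By Lemma~\ref{lemma:Yun} the monodromy factorization of $M(2,g)$ is $\eta_{1,g}^2$, not $\eta_{1,g}^2\cdot\eta_{1,g}^2$; counting vanishing cycles, $M(2,0)$ has twelve nodal fibers and is $E(1)$, not $E(2)$. So there is no embedded $E(2)=M(2,0)\subset M(2,g)$ in which to locate the Fintushel--Stern torus $T$, and cutting a single $M(2,g)$ along a regular fiber cannot yield two pieces each carrying the word $\eta_{1,g}^2$. The actual construction starts from \emph{two} copies of $M(2,g)$ and glues them along a regular genus~$2g{+}1$ fiber via $\Phi_K=\phi_K\oplus\mathrm{id}\oplus\mathrm{id}$; concatenation of monodromy words then gives $\Phi_K(\eta_{1,g}^2)\cdot\eta_{1,g}^2$ immediately. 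The nontrivial content---which your last paragraph gestures at but cannot reach from the wrong model---is the diffeomorphism between this twisted fiber sum and the knot-surgery manifold $E(2)_K$, and that is where the analysis of $S^1\times M_K$ and the $T^3$-gluing is actually used.
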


\begin{remark}
Note that $M(2,g)$ has a genus $(2g+1)$ Lefschetz fibration structure over $S^2$ with monodromy factorization $\eta_{1,g}^2$ corresponding to a fixed generic fiber 
\[
F = \Sigma_{2g+1} =\Sigma_g \sharp \Sigma_{1} \sharp \Sigma_g= \Sigma_{g}^1 \cup \Sigma_{1}^2 \cup \Sigma_{g}^1 
\]
and $E(2)_K$ can also be considered as a twisted fiber sum of two $M(2,g)$'s. 
Here a fiber surface of the fibered knot $K$ is identified 
with the first $\Sigma^1_g$.

Let us investigate a role of the choice of generic fiber in Lefschetz fibrations of $E(2)_K$.
Suppose $\tilde{F}$ is another choice of generic fiber of genus $(2g+1)$ and 
$f: F \to \tilde{F}$ be an orientation preserving diffeomorphism as 
in the following diagram:
\begin{equation}
 \begin{CD}
    \Sigma_{2g+1}  @= F @>f>> \tilde{F} \\
    @V{\Phi}VV    @V{\Phi}VV  @VV{\tilde{\Phi}}V \\
    \Sigma_{2g+1}  @= F @>f>> \tilde{F}
 \end{CD}
 \end{equation}
Then a monodromy factorization of $M(2,g)$ corresponding to the generic fiber $\tilde{F}$ becomes $f(\eta_{1,2}^2)$ and a monodromy map of fibered knot $K$ satisfies
\begin{align*}
S^3 \setminus N(K) &= (\Sigma_g^1 \times [0,1])/ ((x, 1) \sim (\phi_K(x), 0)) \\
&= (f(\Sigma_g^1) \times [0,1])/ ((x, 1) \sim (f\circ\phi_K\circ f^{-1}(x), 0)).
\end{align*}

Therefore a monodromy factorization of $E(2)_K$ corresponding to the generic fiber $\tilde{F}$ can be read as
\[
(f\circ \Phi_K \circ f^{-1})(f(\eta_{1,g}^2)) \cdot f(\eta_{1,g}^2) = f(\Phi_K(\eta_{1,g}^2) \cdot \eta_{1,g}^2).
\] 
Hence, in each equivalent class of monodromy factorizations of $E(2)_K$, we can always choose a monodromy factorization with respect to the chosen generic fiber $F$. This monodromy factorization depends on the choice of monodromy map $\phi_K$ corresponding to the fiber surface $\Sigma_g^1$ and is well defined up to Hurwitz equivalence.
This means that if two monodromy factorizations of $E(2)_K$ with respect to the chosen same generic fiber $F$ are not related by a sequence of Hurwitz moves and inverse Hurwitz moves, then they are located in different equivalence class of monodromy factorizations and therefore they are not isomorphic to each other.
\end{remark}

\subsection{Humphries' graph method}

\begin{definition} \label{definition:graph}
 Suppose $\{\gamma_1, \gamma_2, \cdots, \gamma_{2g}\}$ is a set of simple closed curves on $\Sigma_{g}$ which generates $H_1(\Sigma_g;\mathbb{Z}_2)$. 
Let us define a graph $\Gamma(\gamma_1, \gamma_2, \cdots, \gamma_{2g})$ as follows:
\begin{itemize}
 \item a vertex for each simple closed curve $\gamma_i (1 \leq i \leq 2g)$
 \item an edge between $\gamma_i$ and $\gamma_j$ 
       if $i_2(\gamma_i, \gamma_j) = 1$, where $i_2(\gamma_i, \gamma_j)$ 
       is the modulo $2$ intersection number between two curves $\gamma_i$
       and $\gamma_j$
 \item we assume there is no intersection between any two edges. 
\end{itemize}
Then, for any simple closed curve $\gamma$ on $\Sigma_g$, we can express
$\gamma = \sum_{i=1}^{2g} \varepsilon_i \gamma_i, (\varepsilon_i = 0, 1)$, 
as an element of $H_1(\Sigma_g; \mathbb{Z}_2)$. 
By denoting $\overline{\gamma} := \sum_{\varepsilon_i = 1} \overline{\gamma_i}$, where $\overline{\gamma_i}$ is the union of all closure of half edges with one end vertex $\gamma_i$,
we define $\chi_\Gamma(\gamma):= \chi_\Gamma(\overline{\gamma})$ as 
modulo $2$ Euler number.
\end{definition}

 In~\cite{Humphries:79}, Humphries showed that the minimal number of Dehn twist generators of the mapping class group $\mathcal{M}_g$ or $\mathcal{M}_g^1$ is $2g+1$ by using symplectic transvection and modulo $2$ Euler number of a graph. Furthermore, he also proved the following:

\begin{lemma}[\cite{Humphries:79}, \cite{Park-Yun:2011}]\label{lemma:chi=1}
Let $\Gamma(\gamma_1, \gamma_2, \cdots, \gamma_{2g})$ be a graph corresponding to a set of simple closed curves $\{ \gamma_1, \gamma_2, \cdots, \gamma_{2g} \}$ which generates $H_1(\Sigma_g; \mathbb{Z}_2)$.  
Suppose $G_{\Gamma,g}$ is a subgroup of $\mathcal{M}_g$ generated by
\[
 \{ t_\alpha \ | \   \alpha \textrm{ is a nonseparating simple closed curve on } 
 \Sigma_g \textrm{ such that } \chi_\Gamma(\alpha)=1 \}.
\]
Then $G_{\Gamma, g}$ is a nontrivial proper subgroup of $\mathcal{M}_g$. 
Moreover, if $\beta$ is a nonseparating simple closed curve on $\Sigma_g$ 
with $\chi_\Gamma (\beta) = 0$, then $t_\beta \not\in G_{\Gamma, g}$.
\end{lemma}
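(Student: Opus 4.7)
The plan is to exhibit $G_{\Gamma,g}$ as contained in the preimage, under the symplectic representation of $\mathcal{M}_g$, of the orthogonal group of a quadratic refinement of the intersection form that is built directly from the graph $\Gamma$. The key observation is that the mod-$2$ Euler characteristic $\chi_\Gamma$ is such a quadratic refinement.

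First I would verify that
\[
q_\Gamma : H_1(\Sigma_g;\mathbb{Z}_2) \longrightarrow \mathbb{Z}_2, \qquad q_\Gamma(\gamma) := \chi_\Gamma(\gamma) \bmod 2,
\]
is a well-defined quadratic refinement of the intersection pairing. Concretely, if $\gamma = \sum_i \varepsilon_i \gamma_i$, counting vertices and (half-)edges in the subcomplex $\overline{\gamma}$ gives
\[
q_\Gamma(\gamma) \;\equiv\; \sum_i \varepsilon_i \;+\; \sum_{\{i,j\} \in E(\Gamma)} \varepsilon_i \varepsilon_j \pmod{2},
\]
and a direct expansion then yields $q_\Gamma(\gamma+\gamma') = q_\Gamma(\gamma) + q_\Gamma(\gamma') + i_2(\gamma,\gamma')$, because the cross term $\sum_{\{i,j\}\in E(\Gamma)}(\varepsilon_i\varepsilon_j' + \varepsilon_i'\varepsilon_j)$ is exactly the mod-$2$ intersection number read off from the adjacencies of $\Gamma$.

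Next I would recall the symplectic representation $\rho : \mathcal{M}_g \twoheadrightarrow \mathrm{Sp}(2g,\mathbb{Z}_2)$, under which a Dehn twist $t_\alpha$ is sent to the transvection $T_\alpha : x \mapsto x + i_2(x,\alpha)\alpha$. A short computation, using only that $q_\Gamma$ refines the intersection form and that squaring is the identity on $\mathbb{Z}_2$, produces
\[
q_\Gamma(T_\alpha x) \;=\; q_\Gamma(x) + i_2(x,\alpha)\bigl(q_\Gamma(\alpha)+1\bigr),
\]
so $T_\alpha$ lies in the orthogonal group $O(q_\Gamma) \subset \mathrm{Sp}(2g,\mathbb{Z}_2)$ if and only if $q_\Gamma(\alpha)=1$, i.e.\ if and only if $\chi_\Gamma(\alpha)\equiv 1 \pmod 2$. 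This is exactly the condition selecting our generators.

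The lemma now follows quickly. Every generator of $G_{\Gamma,g}$ maps into the proper subgroup $O(q_\Gamma) \lneq \mathrm{Sp}(2g,\mathbb{Z}_2)$, so $\rho(G_{\Gamma,g}) \subseteq O(q_\Gamma)$; since $\rho$ is surjective, $G_{\Gamma,g}$ is a proper subgroup of $\mathcal{M}_g$. For nontriviality, each $\gamma_i$ is nonseparating (the $\gamma_i$'s form a mod-$2$ basis of $H_1(\Sigma_g;\mathbb{Z}_2)$) and satisfies $\chi_\Gamma(\gamma_i)=1$ since $\overline{\gamma_i}$ is a star with Euler characteristic $1$; hence $t_{\gamma_i}\in G_{\Gamma,g}$. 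Finally, for a nonseparating $\beta$ with $\chi_\Gamma(\beta)=0$ the preservation criterion gives $\rho(t_\beta)=T_\beta\notin O(q_\Gamma)$, and therefore $t_\beta \notin G_{\Gamma,g}$. The only genuinely combinatorial step, and the one I expect to require the most care, is the initial verification that $\chi_\Gamma$ reduces mod $2$ to the quadratic polynomial displayed above and that its associated bilinear form is exactly $i_2$; once this identity is in place, the rest is standard symplectic linear algebra over $\mathbb{F}_2$.
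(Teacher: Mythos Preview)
Your argument is correct and is precisely Humphries' original line of proof: the mod-$2$ reduction of $\chi_\Gamma$ is a quadratic refinement of the intersection form, and a transvection preserves it if and only if $\chi_\Gamma$ of the underlying curve is odd. The paper does not supply its own proof of this lemma---it is quoted from \cite{Humphries:79} and \cite{Park-Yun:2011}---so there is nothing further to compare against; your write-up simply reconstructs the cited argument. One small caveat worth noting: the step $O(q_\Gamma)\lneq\mathrm{Sp}(2g,\mathbb{Z}_2)$ needs the existence of a nonzero class with $q_\Gamma=0$, which holds for $g\ge 2$ but can fail for $g=1$ (the unique graph on two vertices forces Arf invariant~$1$ and then $G_{\Gamma,1}=\mathcal{M}_1$); this is a boundary case of the lemma's statement rather than a defect in your method, and is immaterial here since every application in the paper is in genus at least~$5$.
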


\begin{corollary}[\cite{Humphries:79}, \cite{Park-Yun:2011}]\label{corollary:Humphries}
 Under the same notation as in Lemma~\ref{lemma:chi=1} above, 
 for any nonseparating simple closed curves $c$ and $\gamma$ on $\Sigma_g$,
 we have   
\begin{enumerate}
\item if $\chi_\Gamma (c) = 1$, then $\chi_\Gamma(t_c(\gamma)) = \chi_\Gamma(\gamma)$
\item if $\chi_\Gamma (c) =0$, then 
$\chi_\Gamma(t_c(\gamma)) = \chi_\Gamma(\gamma) + i_2(c, \gamma)$.
\end{enumerate}
\end{corollary}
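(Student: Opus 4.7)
The plan is to extend $\chi_\Gamma$ to all of $H_1(\Sigma_g;\mathbb{Z}_2)$ as a quadratic refinement of the mod $2$ intersection form $i_2$, and then combine this with the standard fact that a Dehn twist $t_c$ acts on $H_1(\Sigma_g;\mathbb{Z}_2)$ as the symplectic transvection $\gamma \mapsto \gamma + i_2(c,\gamma)\,c$. Granted such a quadratic-refinement identity, the corollary reduces to a short case analysis.

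First I would record a direct counting formula: if $\alpha = \sum_i \varepsilon_i \gamma_i$ in $H_1(\Sigma_g;\mathbb{Z}_2)$ and $V_\alpha := \{\gamma_i : \varepsilon_i = 1\}$, then
\[
 \chi_\Gamma(\alpha) \equiv v_\alpha - e_\alpha \pmod 2,
\]
where $v_\alpha = |V_\alpha|$ and $e_\alpha$ is the number of edges of $\Gamma$ both of whose endpoints lie in $V_\alpha$. This comes from the natural CW-decomposition of $\overline{\alpha}$: each ``half-edge'' dangling off a vertex in $V_\alpha$ contributes one midpoint $0$-cell and one $1$-cell, so these cancel in the Euler characteristic, leaving only the contributions from the vertices in $V_\alpha$ and the full edges wholly inside $V_\alpha$.

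The key identity to establish is
\[
 \chi_\Gamma(\alpha + \beta) \equiv \chi_\Gamma(\alpha) + \chi_\Gamma(\beta) + i_2(\alpha,\beta) \pmod 2.
\]
The vertex contribution vanishes mod $2$ since $v_{\alpha+\beta} = v_\alpha + v_\beta - 2|V_\alpha \cap V_\beta|$, so everything reduces to an edge-by-edge verification. For a single edge $\{i,j\}$ of $\Gamma$, the combined contribution to $e_\alpha + e_\beta + e_{\alpha + \beta}$ expands as $\varepsilon_i\varepsilon_j + \delta_i\delta_j + (\varepsilon_i+\delta_i)(\varepsilon_j+\delta_j)$, which reduces modulo $2$ to $\varepsilon_i\delta_j + \varepsilon_j\delta_i$, exactly the contribution of that edge to $i_2(\alpha,\beta)$ under the symplectic-type basis dual to $\{\gamma_i\}$ encoded by $\Gamma$.

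With this identity in hand, the corollary is immediate. Writing $t_c(\gamma) \equiv \gamma + i_2(c,\gamma)\,c$ in $H_1(\Sigma_g;\mathbb{Z}_2)$, the case $i_2(c,\gamma)=0$ gives $\chi_\Gamma(t_c(\gamma)) = \chi_\Gamma(\gamma)$, which is consistent with both (1) and (2). If $i_2(c,\gamma) = 1$, then
\[
 \chi_\Gamma(t_c(\gamma)) = \chi_\Gamma(\gamma + c) = \chi_\Gamma(\gamma) + \chi_\Gamma(c) + 1,
\]
which equals $\chi_\Gamma(\gamma)$ when $\chi_\Gamma(c) = 1$ (giving (1)) and equals $\chi_\Gamma(\gamma) + 1 = \chi_\Gamma(\gamma) + i_2(c,\gamma)$ when $\chi_\Gamma(c) = 0$ (giving (2)). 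The only real obstacle I anticipate is careful bookkeeping in the quadratic identity; the CW-structure on $\overline{\alpha}$ with its auxiliary midpoint vertices is easy to miscount, but once the formula $\chi_\Gamma(\alpha) \equiv v_\alpha - e_\alpha$ is in place, every remaining step is a formal algebraic manipulation modulo $2$.
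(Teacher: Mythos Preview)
Your argument is correct. The paper does not supply its own proof of this corollary; it is simply quoted from \cite{Humphries:79} and \cite{Park-Yun:2011}. Your approach---identifying $\chi_\Gamma$ as a $\mathbb{Z}_2$-quadratic form refining the intersection pairing via the counting formula $\chi_\Gamma(\alpha)\equiv v_\alpha - e_\alpha$, and then applying the Picard--Lefschetz formula $t_c(\gamma)=\gamma+i_2(c,\gamma)\,c$---is exactly the standard one underlying Humphries' original argument, and every step you outline goes through as written.
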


\medskip

%%%%%%%%%%%%%%%%%%%%%%%%%%%%%%%%%%%%%%%%%%%%%%
%%%%
%%%%   Section 3
%%%%
%%%%%%%%%%%%%%%%%%%%%%%%%%%%%%%%%%%%%%%%%%%%%%

\section{Nonisomorphic Lefschetz fibrations}\label{section:nonisom}

As mentioned in Introduction above, we have studied nonisomorphic Lefschetz fibration structures on knot surgery $4$-manifolds $E(n)_K$
for $2$-bridge knot case~\cite{Park-Yun:2009} and for Kanenobu knot case~\cite{Park-Yun:2011}, and we could show that they have at least two nonisomorphic Lefschetz fibration structures in each case. 
Recently I. Baykur obtained a similar result about Lefschetz fibration structures on non-minmal symplectic $4$-manifolds, 
which is following 

\vspace{.5 em}

\begin{theorem}[\cite{Baykur:2014}] 
Given any closed symplectic $4$-manifold $X$ which is not a rational or ruled surface and any integer $n>0$, there are $n$ nonisomorphic Lefschetz pencils of the same genus on a blow-up of $X$, which are not equivalent via fibered Luttinger surgeries. These pencils can be chosen so that they only have nonseparating vanishing cycles. 
\end{theorem}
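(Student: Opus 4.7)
The plan is to combine Donaldson's existence result for Lefschetz pencils on symplectic $4$-manifolds with monodromy substitution techniques in the mapping class group to produce many pencil structures on a common blow-up, and then distinguish them by an invariant that is insensitive to fibered Luttinger surgery.

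First I would invoke Donaldson's theorem to obtain a Lefschetz pencil $f_d$ of sufficiently high degree $d$ on the given symplectic $4$-manifold $X$. Blowing up the base locus yields a Lefschetz fibration $\widetilde{f}_d$ on a blow-up $X \sharp b\, \overline{\mathbb{CP}}^2$, with generic fiber of high genus and a monodromy factorization containing many nonseparating vanishing cycles. The hypothesis that $X$ is not a rational or ruled surface guarantees that such a pencil exists in every sufficiently high degree and rules out collapses at later stages of the construction.

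Next, starting from $\widetilde{f}_d$, I would apply a sequence of monodromy substitutions --- for instance combinations of lantern, chain, and star relations, together with localized plug-type modifications of the kind used in the twisted-fiber-sum constructions of this paper --- to produce monodromy factorizations $W_1, W_2, \ldots, W_n$ which all correspond to Lefschetz pencil structures on the \emph{same} smooth $4$-manifold $X \sharp k\, \overline{\mathbb{CP}}^2$, and such that each $W_j$ involves only nonseparating vanishing cycles. The substitutions should be arranged so that the effect on the Euler characteristic, the signature, and the intersection form of the total space is the same in all $n$ cases, so that the underlying smooth manifold does not change.

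To show that the resulting pencils are mutually nonisomorphic and are not equivalent via fibered Luttinger surgery, I would attach to each $W_j$ a numerical invariant extracted from the monodromy group. In the spirit of Humphries' graph method recalled in Section~\ref{section:prelim}, a natural choice is a modulo-$2$ Euler-type invariant associated to a carefully chosen graph of vanishing cycles in $H_1$ of the generic fiber, designed so that fibered Luttinger surgery along a Lagrangian torus in a smooth fiber does not alter it. Tuning the $n$ substitutions so that this invariant takes $n$ distinct values then separates the $W_j$. The main obstacle, and where essentially all the work sits, is designing the substitution moves so that they simultaneously preserve the diffeomorphism type of the blown-up $4$-manifold, keep the all-nonseparating property of the vanishing cycles, and change the chosen invariant by a controlled amount while the invariant provably remains unchanged under fibered Luttinger surgery; verifying this last point requires a careful understanding of how Luttinger surgery interacts with the homology representation of the monodromy around each Lagrangian torus used in the construction.
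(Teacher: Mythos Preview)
This theorem is not proved in the paper at all: it is quoted verbatim from \cite{Baykur:2014} as a related result, with no argument given. So there is no ``paper's own proof'' to compare your proposal against. Your sketch is therefore not competing with anything in this paper, and the methods you invoke (Humphries' graph method, twisted fiber sums) are the tools the authors use for their \emph{own} main theorem about $E(2)_K$, not for Baykur's result.

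As a standalone attempt at Baykur's theorem, your proposal is not a proof but a wish list. You do not specify which monodromy substitutions to perform, you do not verify that they preserve the diffeomorphism type of the blown-up manifold, you do not exhibit an invariant that is actually unchanged by fibered Luttinger surgery, and you do not show that your substitutions realize $n$ distinct values of such an invariant. You yourself flag that ``essentially all the work sits'' in exactly these unaddressed points. Baykur's actual argument (in the cited paper) goes by a different mechanism: he produces inequivalent pencils via partial conjugations of a Donaldson pencil and distinguishes them using a combinatorial invariant of the monodromy factorization tied to the signatures of associated double branched covers, after showing that fibered Luttinger surgery corresponds to partial conjugation and hence preserves that invariant. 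None of this structure appears in your outline.
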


In this section we construct a family of simply connected minimal symplectic  $4$-manifolds $E(2)_K$, each of which admits arbitrarily many nonismorphic Lefschetz fibration structures with the same genus fiber. 
 In order to obtain such families, we first construct a family of connected sums of fibered knots obtained by Stallings twist from a slice knot $3_1 \sharp 3^*_1$ as follows:

\subsection{Square knot as a building block}
Let $K_0$ be a slice knot $3_1 \sharp 3_1^*$ and $K_n$ be a knot obtained by Stallings twist from the slice knot $3_1 \sharp 3^*_1$ as in Figure~\ref{fig:Kn}, where $n$ in the box means $n$ left handed full twists when $n$ is a positive integer and $|n|$ right handed full twist when $n$ is a negative integer. Then it is well known that
 
\begin{lemma}[\cite{Stallings:78},\cite{Quach_Weber:79}]\label{Lem:Stallings}
 Let $K_n$ be a knot as in Figure~\ref{fig:Kn}, then
\begin{itemize}
\item[(1)] $\Delta_{K_n} = (t^2 - t + 1)^2$ 
\item[(2)] $K_n$ has an Alexander matrix 
\[\begin{pmatrix} 
1 & -1 & 0 & 0 \\ 
0 & n+1 & -n & 0 \\ 
0 & -n & n-1 & 0 \\ 
0 & 0 & 1 & -1 
\end{pmatrix} \]
\item[(3)] $K_n$ has the second Alexander module 
\[
\begin{pmatrix} t^2 - t + 1 & n t \\ 0 & t^2 - t + 1 \end{pmatrix}
\]
\end{itemize}
\end{lemma}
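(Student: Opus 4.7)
The plan is to compute all three invariants simultaneously by starting from an explicit fiber surface for $K_0 = 3_1 \sharp 3^*_1$ and tracking how the Stallings twist modifies the relevant matrices. First I would fix a genus-$2$ Seifert surface $F_0 \subset S^3$ for $K_0$ obtained by plumbing two positive Hopf bands (for $3_1$) with two negative Hopf bands (for $3^*_1$) along a disk, and take as a basis of $H_1(F_0; \mathbb{Z})$ the cores of these four Hopf bands. In this basis the Seifert matrix $V_0$ is block-diagonal with two $2 \times 2$ trefoil blocks, and the standard computation $\Delta_{K_0}(t) = \det(tV_0 - V_0^T)$ already gives $(t^2 - t + 1)^2$.

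Next I would identify the Stallings-twist curve $c$ in Figure~\ref{fig:Kn} as a specific simple closed curve lying on $F_0$, determine its homology class $[c] \in H_1(F_0)$, and verify the two hypotheses of Lemma~\ref{lemma:Harer}(2): $c$ bounds a disk in $S^3 \setminus F_0$ (so is unknotted in $S^3$) and its self-linking $\mathrm{lk}(c, c^+)$ vanishes. Performing $n$ Stallings twists along $c$ modifies the Seifert form by a rank-one update $V_n = V_0 + n\, a\, a^T$, where the entries of $a$ are the algebraic intersection numbers of $c$ with the chosen basis curves. Plugging in the specific class $[c]$ and then performing elementary row-column operations to match Fox-calculus conventions yields exactly the integer Alexander matrix displayed in (2), which settles that part.

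For part (1) I would then apply the matrix-determinant lemma to $\det(t V_n - V_n^T)$: the rank-one correction contributes a factor that is forced to be $1$ by the condition $\mathrm{lk}(c, c^+) = 0$ together with the fact that $a$ lies in the appropriate kernel, so $\Delta_{K_n}(t) = \Delta_{K_0}(t) = (t^2 - t + 1)^2$. For part (3), the second Alexander module is presented by the second elementary ideal of the Alexander matrix, obtained by deleting the first and last (invertible) rows and columns of the $4 \times 4$ matrix in (2) and reintroducing the $t$-dependence coming from the Fox Jacobian of the associated Wirtinger presentation; the surviving $2 \times 2$ block is precisely $\begin{pmatrix} t^2 - t + 1 & n t \\ 0 & t^2 - t + 1 \end{pmatrix}$.

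The main obstacle will be the bookkeeping in part (2): although the twist formula $V_n = V_0 + n\, a\, a^T$ is routine in principle, one must carefully reconcile conventions between the Fox-calculus Alexander matrix over $\mathbb{Z}[t, t^{-1}]$ and the integer matrix displayed in the lemma, and pin down the exact homology class of $c$ from the somewhat schematic Figure~\ref{fig:Kn}. The cleanest route is probably to follow the explicit Wirtinger-style presentation used by Quach-Weber, where each Stallings twist corresponds to a single substitution in the knot group presentation and the Fox Jacobian reads off the stated matrix directly; parts (1) and (3) then reduce to an elementary determinant and minor computation on the output.
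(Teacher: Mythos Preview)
The paper does not prove this lemma at all; it is quoted from Stallings and Quach--Weber with no argument given. So there is no ``paper's proof'' to compare against, and your sketch is in fact more than the paper supplies.

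Your Seifert-matrix approach is the right one and matches what Quach--Weber actually do, but two points in your write-up are off. First, the matrix in part~(2) \emph{is} the Seifert matrix $V_n$, not a Fox--calculus Alexander matrix; there is no ``reconciling conventions'' to do, and no Wirtinger presentation is needed. With the basis of Hopf-band cores you describe, $V_0$ is exactly the $n=0$ specialization of the displayed matrix, the twist curve $c$ has intersection vector $a=(0,1,-1,0)^T$, and $V_n=V_0+n\,aa^T$ gives the stated matrix on the nose. Second, your account of part~(3) is garbled: you cannot ``delete the first and last rows and columns of the integer matrix and reintroduce the $t$-dependence.'' The Alexander module is presented by the $4\times 4$ matrix $tV_n-V_n^T$ over $\mathbb{Z}[t,t^{-1}]$, and the displayed $2\times 2$ matrix is what you get after row and column operations on \emph{that} matrix (the outer rows and columns contain units $1$ and $t$, so they can be used to clear and then discarded). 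The phrase ``second Alexander module'' in the lemma just means this reduced presentation, not the second elementary ideal.

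For part~(1), your claim that the rank-one correction factor is forced to be $1$ solely by $\mathrm{lk}(c,c^+)=a^TV_0a=0$ is not a general fact; here it follows from the mirror symmetry between the two trefoil blocks, which makes $a^T(tV_0-V_0^T)^{-1}a=0$ by a direct two-line computation. Once you correct these points the argument is complete.
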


\vspace{-.5 em}

\begin{figure}[htbp]
\begin{center}
\definecolor{qqqqff}{rgb}{0.,0.,1.}
\definecolor{ffqqqq}{rgb}{1.,0.,0.}
\begin{tikzpicture}[line cap=round,line join=round,>=triangle 45,x=1.0cm,y=1.0cm]
\clip(-0.408763256412,1.05) rectangle (8.65365351446,3.85);
\draw [shift={(1.4,2.)}] plot[domain=2.35619449019:3.92699081699,variable=\t]({1.*0.565685424949*cos(\t r)+0.*0.565685424949*sin(\t r)},{0.*0.565685424949*cos(\t r)+1.*0.565685424949*sin(\t r)});
\draw [shift={(0.8,2.)}] plot[domain=-0.785398163397:0.785398163397,variable=\t]({1.*0.565685424949*cos(\t r)+0.*0.565685424949*sin(\t r)},{0.*0.565685424949*cos(\t r)+1.*0.565685424949*sin(\t r)});
\draw [shift={(1.4,3.)}] plot[domain=2.35619449019:3.92699081699,variable=\t]({1.*0.565685424949*cos(\t r)+0.*0.565685424949*sin(\t r)},{0.*0.565685424949*cos(\t r)+1.*0.565685424949*sin(\t r)});
\draw (1.,2.4)-- (1.2,2.6);
\draw (1.,1.4)-- (1.2,1.6);
\draw [shift={(0.6,3.)}] plot[domain=0.982793723247:2.55359005004,variable=\t]({1.*0.721110255093*cos(\t r)+0.*0.721110255093*sin(\t r)},{0.*0.721110255093*cos(\t r)+1.*0.721110255093*sin(\t r)});
\draw [shift={(1.6,2.4)}] plot[domain=2.58299333825:3.70019196893,variable=\t]({1.*1.88679622641*cos(\t r)+0.*1.88679622641*sin(\t r)},{0.*1.88679622641*cos(\t r)+1.*1.88679622641*sin(\t r)});
\draw (1.,3.4)-- (1.2,3.6);
\draw [shift={(1.6,3.2)}] plot[domain=2.35619449019:3.13352128024,variable=\t]({0.707106781187*0.565685424949*cos(\t r)+0.707106781187*0.565685424949*sin(\t r)},{-0.707106781187*0.565685424949*cos(\t r)+0.707106781187*0.565685424949*sin(\t r)});
\draw (1.2,3.2)-- (1.2,2.8);
\draw (1.2,2.8)-- (2.,2.8);
\draw (2.,2.8)-- (2.,3.2);
\draw (2.,3.2)-- (1.2,3.2);
\draw [shift={(0.8,3.)}] plot[domain=0.361367123907:0.785398163397,variable=\t]({1.*0.565685424949*cos(\t r)+0.*0.565685424949*sin(\t r)},{0.*0.565685424949*cos(\t r)+1.*0.565685424949*sin(\t r)});
\draw [shift={(0.8,3.)}] plot[domain=5.49778714378:5.92181818327,variable=\t]({1.*0.565685424949*cos(\t r)+0.*0.565685424949*sin(\t r)},{0.*0.565685424949*cos(\t r)+1.*0.565685424949*sin(\t r)});
\draw [shift={(1.6,1.6)}] plot[domain=3.60524026259:4.71238898038,variable=\t]({1.*0.4472135955*cos(\t r)+0.*0.4472135955*sin(\t r)},{0.*0.4472135955*cos(\t r)+1.*0.4472135955*sin(\t r)});
\draw [shift={(0.5,1.9)}] plot[domain=3.92699081699:5.49778714378,variable=\t]({1.*0.707106781187*cos(\t r)+0.*0.707106781187*sin(\t r)},{0.*0.707106781187*cos(\t r)+1.*0.707106781187*sin(\t r)});
\draw [shift={(6.4,2.)}] plot[domain=2.35619449019:3.92699081699,variable=\t]({1.*0.565685424949*cos(\t r)+0.*0.565685424949*sin(\t r)},{0.*0.565685424949*cos(\t r)+1.*0.565685424949*sin(\t r)});
\draw [shift={(5.8,2.)}] plot[domain=-0.785398163397:0.785398163397,variable=\t]({1.*0.565685424949*cos(\t r)+0.*0.565685424949*sin(\t r)},{0.*0.565685424949*cos(\t r)+1.*0.565685424949*sin(\t r)});
\draw [shift={(6.4,3.)}] plot[domain=2.35619449019:3.92699081699,variable=\t]({1.*0.565685424949*cos(\t r)+0.*0.565685424949*sin(\t r)},{0.*0.565685424949*cos(\t r)+1.*0.565685424949*sin(\t r)});
\draw [shift={(5.6,3.)}] plot[domain=0.982793723247:2.55359005004,variable=\t]({1.*0.721110255093*cos(\t r)+0.*0.721110255093*sin(\t r)},{0.*0.721110255093*cos(\t r)+1.*0.721110255093*sin(\t r)});
\draw [shift={(6.6,2.4)}] plot[domain=2.58299333825:3.70019196893,variable=\t]({1.*1.88679622641*cos(\t r)+0.*1.88679622641*sin(\t r)},{0.*1.88679622641*cos(\t r)+1.*1.88679622641*sin(\t r)});
\draw [shift={(6.6,3.2)}] plot[domain=2.35619449019:3.13352128024,variable=\t]({0.707106781187*0.565685424949*cos(\t r)+0.707106781187*0.565685424949*sin(\t r)},{-0.707106781187*0.565685424949*cos(\t r)+0.707106781187*0.565685424949*sin(\t r)});
\draw [shift={(5.8,3.)}] plot[domain=0.361367123907:0.785398163397,variable=\t]({1.*0.565685424949*cos(\t r)+0.*0.565685424949*sin(\t r)},{0.*0.565685424949*cos(\t r)+1.*0.565685424949*sin(\t r)});
\draw [shift={(5.8,3.)}] plot[domain=5.49778714378:5.92181818327,variable=\t]({1.*0.565685424949*cos(\t r)+0.*0.565685424949*sin(\t r)},{0.*0.565685424949*cos(\t r)+1.*0.565685424949*sin(\t r)});
\draw [shift={(6.6,1.6)}] plot[domain=3.60524026259:4.71238898038,variable=\t]({1.*0.4472135955*cos(\t r)+0.*0.4472135955*sin(\t r)},{0.*0.4472135955*cos(\t r)+1.*0.4472135955*sin(\t r)});
\draw [shift={(5.5,1.9)}] plot[domain=3.92699081699:5.49778714378,variable=\t]({1.*0.707106781187*cos(\t r)+0.*0.707106781187*sin(\t r)},{0.*0.707106781187*cos(\t r)+1.*0.707106781187*sin(\t r)});
\draw (6.,2.4)-- (6.2,2.6);
\draw (6.,1.4)-- (6.2,1.6);
\draw (6.,3.4)-- (6.2,3.6);
\draw [shift={(1.8,2.)}] plot[domain=2.35619449019:3.92699081699,variable=\t]({-1.*0.565685424949*cos(\t r)+0.*0.565685424949*sin(\t r)},{0.*0.565685424949*cos(\t r)+1.*0.565685424949*sin(\t r)});
\draw [shift={(2.4,2.)}] plot[domain=-0.785398163397:0.785398163397,variable=\t]({-1.*0.565685424949*cos(\t r)+0.*0.565685424949*sin(\t r)},{0.*0.565685424949*cos(\t r)+1.*0.565685424949*sin(\t r)});
\draw [shift={(1.8,3.)}] plot[domain=2.35619449019:3.92699081699,variable=\t]({-1.*0.565685424949*cos(\t r)+0.*0.565685424949*sin(\t r)},{0.*0.565685424949*cos(\t r)+1.*0.565685424949*sin(\t r)});
\draw [shift={(2.6,3.)}] plot[domain=0.982793723247:2.55359005004,variable=\t]({-1.*0.721110255093*cos(\t r)+0.*0.721110255093*sin(\t r)},{0.*0.721110255093*cos(\t r)+1.*0.721110255093*sin(\t r)});
\draw [shift={(1.6,2.4)}] plot[domain=2.58299333825:3.70019196893,variable=\t]({-1.*1.88679622641*cos(\t r)+0.*1.88679622641*sin(\t r)},{0.*1.88679622641*cos(\t r)+1.*1.88679622641*sin(\t r)});
\draw [shift={(1.6,3.2)}] plot[domain=2.35619449019:3.13352128024,variable=\t]({-0.707106781187*0.565685424949*cos(\t r)+-0.707106781187*0.565685424949*sin(\t r)},{-0.707106781187*0.565685424949*cos(\t r)+0.707106781187*0.565685424949*sin(\t r)});
\draw [shift={(2.4,3.)}] plot[domain=0.361367123907:0.785398163397,variable=\t]({-1.*0.565685424949*cos(\t r)+0.*0.565685424949*sin(\t r)},{0.*0.565685424949*cos(\t r)+1.*0.565685424949*sin(\t r)});
\draw [shift={(2.4,3.)}] plot[domain=5.49778714378:5.92181818327,variable=\t]({-1.*0.565685424949*cos(\t r)+0.*0.565685424949*sin(\t r)},{0.*0.565685424949*cos(\t r)+1.*0.565685424949*sin(\t r)});
\draw [shift={(1.6,1.6)}] plot[domain=3.60524026259:4.71238898038,variable=\t]({-1.*0.4472135955*cos(\t r)+0.*0.4472135955*sin(\t r)},{0.*0.4472135955*cos(\t r)+1.*0.4472135955*sin(\t r)});
\draw [shift={(2.7,1.9)}] plot[domain=3.92699081699:5.49778714378,variable=\t]({-1.*0.707106781187*cos(\t r)+0.*0.707106781187*sin(\t r)},{0.*0.707106781187*cos(\t r)+1.*0.707106781187*sin(\t r)});
\draw (2.2,2.4)-- (2.,2.6);
\draw (2.2,1.4)-- (2.,1.6);
\draw (2.2,3.4)-- (2.,3.6);
\draw (2.,3.2)-- (2.,2.8);
\draw [shift={(5.8,3.)}] plot[domain=-0.785398163397:0.785398163397,variable=\t]({1.*0.565685424949*cos(\t r)+0.*0.565685424949*sin(\t r)},{0.*0.565685424949*cos(\t r)+1.*0.565685424949*sin(\t r)});
\draw [color=ffqqqq] (6.6,3.5)-- (6.2,3.5);
\draw [shift={(6.16,3.02)},color=ffqqqq]  plot[domain=1.79607310601:4.38426908537,variable=\t]({1.*0.49244289009*cos(\t r)+0.*0.49244289009*sin(\t r)},{0.*0.49244289009*cos(\t r)+1.*0.49244289009*sin(\t r)});
\draw [color=ffqqqq] (6.6,2.45)-- (6.3,2.45);
\draw [shift={(6.3,2.85)},color=ffqqqq]  plot[domain=4.46117647478:4.71238898038,variable=\t]({1.*0.400344898933*cos(\t r)+0.*0.400344898933*sin(\t r)},{0.*0.400344898933*cos(\t r)+1.*0.400344898933*sin(\t r)});
\draw [color=qqqqff] (6.6,2.55)-- (6.25,2.55);
\draw [shift={(6.25,2.25)},color=qqqqff]  plot[domain=1.57079632679:1.95130270391,variable=\t]({1.*0.3*cos(\t r)+0.*0.3*sin(\t r)},{0.*0.3*cos(\t r)+1.*0.3*sin(\t r)});
\draw [shift={(6.15,2.)},color=qqqqff]  plot[domain=1.76819188664:4.51499342053,variable=\t]({1.*0.509901951359*cos(\t r)+0.*0.509901951359*sin(\t r)},{0.*0.509901951359*cos(\t r)+1.*0.509901951359*sin(\t r)});
\draw [color=qqqqff] (6.6,1.5)-- (6.2,1.5);
\draw [shift={(6.8,2.)}] plot[domain=2.35619449019:3.92699081699,variable=\t]({-1.*0.565685424949*cos(\t r)+0.*0.565685424949*sin(\t r)},{0.*0.565685424949*cos(\t r)+1.*0.565685424949*sin(\t r)});
\draw [shift={(7.4,2.)}] plot[domain=-0.785398163397:0.785398163397,variable=\t]({-1.*0.565685424949*cos(\t r)+0.*0.565685424949*sin(\t r)},{0.*0.565685424949*cos(\t r)+1.*0.565685424949*sin(\t r)});
\draw [shift={(6.8,3.)}] plot[domain=2.35619449019:3.92699081699,variable=\t]({-1.*0.565685424949*cos(\t r)+0.*0.565685424949*sin(\t r)},{0.*0.565685424949*cos(\t r)+1.*0.565685424949*sin(\t r)});
\draw [shift={(7.6,3.)}] plot[domain=0.982793723247:2.55359005004,variable=\t]({-1.*0.721110255093*cos(\t r)+0.*0.721110255093*sin(\t r)},{0.*0.721110255093*cos(\t r)+1.*0.721110255093*sin(\t r)});
\draw [shift={(6.6,2.4)}] plot[domain=2.58299333825:3.70019196893,variable=\t]({-1.*1.88679622641*cos(\t r)+0.*1.88679622641*sin(\t r)},{0.*1.88679622641*cos(\t r)+1.*1.88679622641*sin(\t r)});
\draw [shift={(6.6,3.2)}] plot[domain=2.35619449019:3.13352128024,variable=\t]({-0.707106781187*0.565685424949*cos(\t r)+-0.707106781187*0.565685424949*sin(\t r)},{-0.707106781187*0.565685424949*cos(\t r)+0.707106781187*0.565685424949*sin(\t r)});
\draw [shift={(7.4,3.)}] plot[domain=0.361367123907:0.785398163397,variable=\t]({-1.*0.565685424949*cos(\t r)+0.*0.565685424949*sin(\t r)},{0.*0.565685424949*cos(\t r)+1.*0.565685424949*sin(\t r)});
\draw [shift={(7.4,3.)}] plot[domain=5.49778714378:5.92181818327,variable=\t]({-1.*0.565685424949*cos(\t r)+0.*0.565685424949*sin(\t r)},{0.*0.565685424949*cos(\t r)+1.*0.565685424949*sin(\t r)});
\draw [shift={(6.6,1.6)}] plot[domain=3.60524026259:4.71238898038,variable=\t]({-1.*0.4472135955*cos(\t r)+0.*0.4472135955*sin(\t r)},{0.*0.4472135955*cos(\t r)+1.*0.4472135955*sin(\t r)});
\draw [shift={(7.7,1.9)}] plot[domain=3.92699081699:5.49778714378,variable=\t]({-1.*0.707106781187*cos(\t r)+0.*0.707106781187*sin(\t r)},{0.*0.707106781187*cos(\t r)+1.*0.707106781187*sin(\t r)});
\draw [shift={(7.4,3.)}] plot[domain=-0.785398163397:0.785398163397,variable=\t]({-1.*0.565685424949*cos(\t r)+0.*0.565685424949*sin(\t r)},{0.*0.565685424949*cos(\t r)+1.*0.565685424949*sin(\t r)});
\draw [shift={(7.04,3.02)},color=ffqqqq]  plot[domain=1.79607310601:4.38426908537,variable=\t]({-1.*0.49244289009*cos(\t r)+0.*0.49244289009*sin(\t r)},{0.*0.49244289009*cos(\t r)+1.*0.49244289009*sin(\t r)});
\draw [shift={(6.9,2.85)},color=ffqqqq]  plot[domain=4.46117647478:4.71238898038,variable=\t]({-1.*0.400344898933*cos(\t r)+0.*0.400344898933*sin(\t r)},{0.*0.400344898933*cos(\t r)+1.*0.400344898933*sin(\t r)});
\draw [shift={(6.95,2.25)},color=qqqqff]  plot[domain=1.57079632679:1.95130270391,variable=\t]({-1.*0.3*cos(\t r)+0.*0.3*sin(\t r)},{0.*0.3*cos(\t r)+1.*0.3*sin(\t r)});
\draw [shift={(7.05,2.)},color=qqqqff]  plot[domain=1.76819188664:4.51499342053,variable=\t]({-1.*0.509901951359*cos(\t r)+0.*0.509901951359*sin(\t r)},{0.*0.509901951359*cos(\t r)+1.*0.509901951359*sin(\t r)});
\draw (7.2,2.4)-- (7.,2.6);
\draw (7.2,1.4)-- (7.,1.6);
\draw (7.2,3.4)-- (7.,3.6);
\draw [color=ffqqqq] (6.6,3.5)-- (7.,3.5);
\draw [color=ffqqqq] (6.6,2.45)-- (6.9,2.45);
\draw [color=qqqqff] (6.6,2.55)-- (6.95,2.55);
\draw [color=qqqqff] (6.6,1.5)-- (7.,1.5);
\draw (1.35,3.2) node[anchor=north west] {$n$};
\draw [color=ffqqqq](7.6,3.28537649486) node[anchor=north west] {$c$};
\draw [color=qqqqff](7.6,2.1307897428) node[anchor=north west] {$d$};
\end{tikzpicture}

\caption{Stallings knot $K_n$}
\label{fig:Kn}
\end{center}
\end{figure}
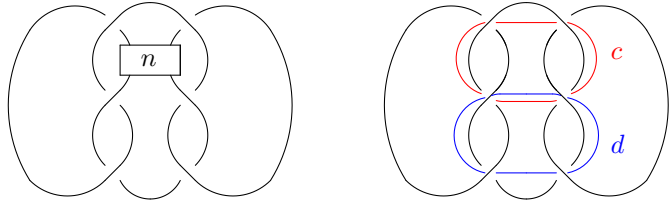

\subsection{Main Construction}
 Even though monodromy group is a very rough invariant for Lefschetz fibrations, it sometimes gives enough information to distinguish some pairs of Lefschetz fibration structures. For example, we were successful to distinguish some Lefschetz fibration structures on $E(2)_{K}$ for a family of  Kanenobu knots up to parity~\cite{Park-Yun:2011}. The main tool used in the proof was Humphries' graph method. In this paper we use Humphries' graph method again to show that the corresponding monodromy groups appeared in the main theorem are mutually distinct. For this, we start with the following lemma.

\vspace{-.5 em}

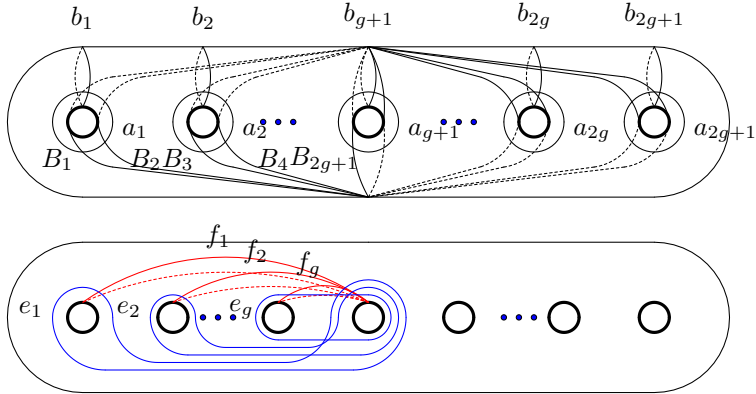
\begin{figure}[htbp]
\begin{center}
\definecolor{qqqqff}{rgb}{0.,0.,1.}
\definecolor{ffqqqq}{rgb}{1.,0.,0.}
\begin{tikzpicture}[line cap=round,line join=round,>=triangle 45,x=1.0cm,y=1.0cm]
\clip(0.0,-2.05) rectangle (11.0,3.2);
\draw [shift={(1.2,1.6)}] plot[domain=0.:3.14159265359,variable=\t]({1.*0.4*cos(\t r)+0.*0.4*sin(\t r)},{0.*0.4*cos(\t r)+1.*0.4*sin(\t r)});
\draw [shift={(1.2,1.6)}] plot[domain=-3.14159265359:0.,variable=\t]({1.*0.4*cos(\t r)+0.*0.4*sin(\t r)},{0.*0.4*cos(\t r)+1.*0.4*sin(\t r)});
\draw [shift={(2.8,1.6)}] plot[domain=0.:3.14159265359,variable=\t]({1.*0.4*cos(\t r)+0.*0.4*sin(\t r)},{0.*0.4*cos(\t r)+1.*0.4*sin(\t r)});
\draw [shift={(2.8,1.6)}] plot[domain=-3.14159265359:0.,variable=\t]({1.*0.4*cos(\t r)+0.*0.4*sin(\t r)},{0.*0.4*cos(\t r)+1.*0.4*sin(\t r)});
\draw [shift={(5.,1.6)}] plot[domain=0.:3.14159265359,variable=\t]({1.*0.4*cos(\t r)+0.*0.4*sin(\t r)},{0.*0.4*cos(\t r)+1.*0.4*sin(\t r)});
\draw [shift={(5.,1.6)}] plot[domain=-3.14159265359:0.,variable=\t]({1.*0.4*cos(\t r)+0.*0.4*sin(\t r)},{0.*0.4*cos(\t r)+1.*0.4*sin(\t r)});
\draw [shift={(1.2,1.6)},line width=1.2pt]  plot[domain=0.:3.14159265359,variable=\t]({1.*0.2*cos(\t r)+0.*0.2*sin(\t r)},{0.*0.2*cos(\t r)+1.*0.2*sin(\t r)});
\draw [shift={(1.2,1.6)},line width=1.2pt]  plot[domain=-3.14159265359:0.,variable=\t]({1.*0.2*cos(\t r)+0.*0.2*sin(\t r)},{0.*0.2*cos(\t r)+1.*0.2*sin(\t r)});
\draw [shift={(2.8,1.6)},line width=1.2pt]  plot[domain=0.:3.14159265359,variable=\t]({1.*0.2*cos(\t r)+0.*0.2*sin(\t r)},{0.*0.2*cos(\t r)+1.*0.2*sin(\t r)});
\draw [shift={(2.8,1.6)},line width=1.2pt]  plot[domain=-3.14159265359:0.,variable=\t]({1.*0.2*cos(\t r)+0.*0.2*sin(\t r)},{0.*0.2*cos(\t r)+1.*0.2*sin(\t r)});
\draw [shift={(5.,1.6)},line width=1.2pt]  plot[domain=0.:3.14159265359,variable=\t]({1.*0.2*cos(\t r)+0.*0.2*sin(\t r)},{0.*0.2*cos(\t r)+1.*0.2*sin(\t r)});
\draw [shift={(5.,1.6)},line width=1.2pt]  plot[domain=-3.14159265359:0.,variable=\t]({1.*0.2*cos(\t r)+0.*0.2*sin(\t r)},{0.*0.2*cos(\t r)+1.*0.2*sin(\t r)});
\draw [shift={(1.2,1.6)}] plot[domain=1.57079632679:4.71238898038,variable=\t]({1.*1.*cos(\t r)+0.*1.*sin(\t r)},{0.*1.*cos(\t r)+1.*1.*sin(\t r)});
\draw (1.2,2.6)-- (5.,2.6);
\draw (1.2,0.6)-- (5.,0.6);
\draw [shift={(0.45,2.2)}] plot[domain=-0.489957326254:0.489957326254,variable=\t]({1.*0.85*cos(\t r)+0.*0.85*sin(\t r)},{0.*0.85*cos(\t r)+1.*0.85*sin(\t r)});
\draw [shift={(2.05,2.2)}] plot[domain=-0.489957326254:0.489957326254,variable=\t]({1.*0.85*cos(\t r)+0.*0.85*sin(\t r)},{0.*0.85*cos(\t r)+1.*0.85*sin(\t r)});
\draw [shift={(1.95,2.2)},dash pattern=on 1pt off 1pt]  plot[domain=2.65163532734:3.63154997984,variable=\t]({1.*0.85*cos(\t r)+0.*0.85*sin(\t r)},{0.*0.85*cos(\t r)+1.*0.85*sin(\t r)});
\draw [shift={(3.55,2.2)},dash pattern=on 1pt off 1pt]  plot[domain=2.65163532734:3.63154997984,variable=\t]({1.*0.85*cos(\t r)+0.*0.85*sin(\t r)},{0.*0.85*cos(\t r)+1.*0.85*sin(\t r)});
\draw [shift={(1.7,1.7)}] plot[domain=3.28348970819:4.57049192578,variable=\t]({1.*0.707106781187*cos(\t r)+0.*0.707106781187*sin(\t r)},{0.*0.707106781187*cos(\t r)+1.*0.707106781187*sin(\t r)});
\draw [shift={(2.1,1.7)}] plot[domain=3.28348970819:4.57049192578,variable=\t]({1.*0.707106781187*cos(\t r)+0.*0.707106781187*sin(\t r)},{0.*0.707106781187*cos(\t r)+1.*0.707106781187*sin(\t r)});
\draw [shift={(3.3,1.7)}] plot[domain=3.28348970819:4.57049192578,variable=\t]({1.*0.707106781187*cos(\t r)+0.*0.707106781187*sin(\t r)},{0.*0.707106781187*cos(\t r)+1.*0.707106781187*sin(\t r)});
\draw [shift={(3.75487086278,1.72743543139)}] plot[domain=3.30883332614:4.37667530848,variable=\t]({1.*0.765551963384*cos(\t r)+0.*0.765551963384*sin(\t r)},{0.*0.765551963384*cos(\t r)+1.*0.765551963384*sin(\t r)});
\draw [shift={(6.4,1.4)}] plot[domain=3.01723765904:3.66073876784,variable=\t]({1.*1.61245154966*cos(\t r)+0.*1.61245154966*sin(\t r)},{0.*1.61245154966*cos(\t r)+1.*1.61245154966*sin(\t r)});
\draw (3.50266506208,1.00462007271)-- (5.,0.6);
\draw (3.2,1.)-- (5.,0.6);
\draw (2.,1.)-- (5.,0.6);
\draw (1.6,1.)-- (5.,0.6);
\draw [shift={(1.7,1.5)},dash pattern=on 1pt off 1pt]  plot[domain=1.7126933814:2.99969559899,variable=\t]({1.*0.707106781187*cos(\t r)+0.*0.707106781187*sin(\t r)},{0.*0.707106781187*cos(\t r)+1.*0.707106781187*sin(\t r)});
\draw [dash pattern=on 1pt off 1pt] (1.6,2.2)-- (5.,2.6);
\draw [shift={(2.1,1.5)},dash pattern=on 1pt off 1pt]  plot[domain=1.7126933814:2.99969559899,variable=\t]({1.*0.707106781187*cos(\t r)+0.*0.707106781187*sin(\t r)},{0.*0.707106781187*cos(\t r)+1.*0.707106781187*sin(\t r)});
\draw [dash pattern=on 1pt off 1pt] (2.,2.2)-- (5.,2.6);
\draw [shift={(3.3,1.5)},dash pattern=on 1pt off 1pt]  plot[domain=1.7126933814:2.99969559899,variable=\t]({1.*0.707106781187*cos(\t r)+0.*0.707106781187*sin(\t r)},{0.*0.707106781187*cos(\t r)+1.*0.707106781187*sin(\t r)});
\draw [dash pattern=on 1pt off 1pt] (3.2,2.2)-- (5.,2.6);
\draw [shift={(3.75487086278,1.47256456861)},dash pattern=on 1pt off 1pt]  plot[domain=1.9065099987:2.97435198104,variable=\t]({1.*0.765551963384*cos(\t r)+0.*0.765551963384*sin(\t r)},{0.*0.765551963384*cos(\t r)+1.*0.765551963384*sin(\t r)});
\draw [dash pattern=on 1pt off 1pt] (3.50266506208,2.19537992729)-- (5.,2.6);
\draw [shift={(6.4,1.8)},dash pattern=on 1pt off 1pt]  plot[domain=2.62244653934:3.26594764814,variable=\t]({1.*1.61245154966*cos(\t r)+0.*1.61245154966*sin(\t r)},{0.*1.61245154966*cos(\t r)+1.*1.61245154966*sin(\t r)});
\draw [shift={(8.8,1.6)}] plot[domain=-3.14159265359:0.,variable=\t]({1.*0.4*cos(\t r)+0.*0.4*sin(\t r)},{0.*0.4*cos(\t r)+1.*0.4*sin(\t r)});
\draw [shift={(8.8,1.6)}] plot[domain=0.:3.14159265359,variable=\t]({1.*0.4*cos(\t r)+0.*0.4*sin(\t r)},{0.*0.4*cos(\t r)+1.*0.4*sin(\t r)});
\draw [shift={(7.2,1.6)}] plot[domain=-3.14159265359:0.,variable=\t]({1.*0.4*cos(\t r)+0.*0.4*sin(\t r)},{0.*0.4*cos(\t r)+1.*0.4*sin(\t r)});
\draw [shift={(7.2,1.6)}] plot[domain=0.:3.14159265359,variable=\t]({1.*0.4*cos(\t r)+0.*0.4*sin(\t r)},{0.*0.4*cos(\t r)+1.*0.4*sin(\t r)});
\draw [shift={(8.8,1.6)},line width=1.2pt]  plot[domain=-3.14159265359:0.,variable=\t]({1.*0.2*cos(\t r)+0.*0.2*sin(\t r)},{0.*0.2*cos(\t r)+1.*0.2*sin(\t r)});
\draw [shift={(8.8,1.6)},line width=1.2pt]  plot[domain=0.:3.14159265359,variable=\t]({1.*0.2*cos(\t r)+0.*0.2*sin(\t r)},{0.*0.2*cos(\t r)+1.*0.2*sin(\t r)});
\draw [shift={(7.2,1.6)},line width=1.2pt]  plot[domain=-3.14159265359:0.,variable=\t]({1.*0.2*cos(\t r)+0.*0.2*sin(\t r)},{0.*0.2*cos(\t r)+1.*0.2*sin(\t r)});
\draw [shift={(7.2,1.6)},line width=1.2pt]  plot[domain=0.:3.14159265359,variable=\t]({1.*0.2*cos(\t r)+0.*0.2*sin(\t r)},{0.*0.2*cos(\t r)+1.*0.2*sin(\t r)});
\draw [shift={(8.8,1.6)}] plot[domain=-1.57079632679:1.57079632679,variable=\t]({1.*1.*cos(\t r)+0.*1.*sin(\t r)},{0.*1.*cos(\t r)+1.*1.*sin(\t r)});
\draw [shift={(8.3,1.5)}] plot[domain=0.141897054604:1.42889927219,variable=\t]({1.*0.707106781187*cos(\t r)+0.*0.707106781187*sin(\t r)},{0.*0.707106781187*cos(\t r)+1.*0.707106781187*sin(\t r)});
\draw [shift={(7.9,1.5)}] plot[domain=0.141897054604:1.42889927219,variable=\t]({1.*0.707106781187*cos(\t r)+0.*0.707106781187*sin(\t r)},{0.*0.707106781187*cos(\t r)+1.*0.707106781187*sin(\t r)});
\draw [shift={(6.7,1.5)}] plot[domain=0.141897054604:1.42889927219,variable=\t]({1.*0.707106781187*cos(\t r)+0.*0.707106781187*sin(\t r)},{0.*0.707106781187*cos(\t r)+1.*0.707106781187*sin(\t r)});
\draw [shift={(6.24512913722,1.47256456861)}] plot[domain=0.16724067255:1.23508265489,variable=\t]({1.*0.765551963384*cos(\t r)+0.*0.765551963384*sin(\t r)},{0.*0.765551963384*cos(\t r)+1.*0.765551963384*sin(\t r)});
\draw [shift={(3.6,1.8)}] plot[domain=-0.124354994547:0.519146114247,variable=\t]({1.*1.61245154966*cos(\t r)+0.*1.61245154966*sin(\t r)},{0.*1.61245154966*cos(\t r)+1.*1.61245154966*sin(\t r)});
\draw [shift={(8.3,1.7)},dash pattern=on 1pt off 1pt]  plot[domain=4.85428603499:6.14128825258,variable=\t]({1.*0.707106781187*cos(\t r)+0.*0.707106781187*sin(\t r)},{0.*0.707106781187*cos(\t r)+1.*0.707106781187*sin(\t r)});
\draw [shift={(7.9,1.7)},dash pattern=on 1pt off 1pt]  plot[domain=4.85428603499:6.14128825258,variable=\t]({1.*0.707106781187*cos(\t r)+0.*0.707106781187*sin(\t r)},{0.*0.707106781187*cos(\t r)+1.*0.707106781187*sin(\t r)});
\draw [shift={(6.7,1.7)},dash pattern=on 1pt off 1pt]  plot[domain=4.85428603499:6.14128825258,variable=\t]({1.*0.707106781187*cos(\t r)+0.*0.707106781187*sin(\t r)},{0.*0.707106781187*cos(\t r)+1.*0.707106781187*sin(\t r)});
\draw [shift={(6.24512913722,1.72743543139)},dash pattern=on 1pt off 1pt]  plot[domain=5.04810265229:6.11594463463,variable=\t]({1.*0.765551963384*cos(\t r)+0.*0.765551963384*sin(\t r)},{0.*0.765551963384*cos(\t r)+1.*0.765551963384*sin(\t r)});
\draw [shift={(3.6,1.4)},dash pattern=on 1pt off 1pt]  plot[domain=-0.519146114247:0.124354994547,variable=\t]({1.*1.61245154966*cos(\t r)+0.*1.61245154966*sin(\t r)},{0.*1.61245154966*cos(\t r)+1.*1.61245154966*sin(\t r)});
\draw (8.8,0.6)-- (5.,0.6);
\draw (8.8,2.6)-- (5.,2.6);
\draw (6.49733493792,2.19537992729)-- (5.,2.6);
\draw (6.8,2.2)-- (5.,2.6);
\draw (8.,2.2)-- (5.,2.6);
\draw (8.4,2.2)-- (5.,2.6);
\draw [dash pattern=on 1pt off 1pt] (8.4,1.)-- (5.,0.6);
\draw [dash pattern=on 1pt off 1pt] (8.,1.)-- (5.,0.6);
\draw [dash pattern=on 1pt off 1pt] (6.8,1.)-- (5.,0.6);
\draw [dash pattern=on 1pt off 1pt] (6.49733493792,1.00462007271)-- (5.,0.6);
\draw [shift={(4.25,2.2)}] plot[domain=-0.489957326254:0.489957326254,variable=\t]({1.*0.85*cos(\t r)+0.*0.85*sin(\t r)},{0.*0.85*cos(\t r)+1.*0.85*sin(\t r)});
\draw [shift={(5.75,2.2)},dash pattern=on 1pt off 1pt]  plot[domain=2.65163532734:3.63154997984,variable=\t]({1.*0.85*cos(\t r)+0.*0.85*sin(\t r)},{0.*0.85*cos(\t r)+1.*0.85*sin(\t r)});
\draw [shift={(7.95,2.2)},dash pattern=on 1pt off 1pt]  plot[domain=2.65163532734:3.63154997984,variable=\t]({1.*0.85*cos(\t r)+0.*0.85*sin(\t r)},{0.*0.85*cos(\t r)+1.*0.85*sin(\t r)});
\draw [shift={(6.45,2.2)}] plot[domain=-0.489957326254:0.489957326254,variable=\t]({1.*0.85*cos(\t r)+0.*0.85*sin(\t r)},{0.*0.85*cos(\t r)+1.*0.85*sin(\t r)});
\draw [shift={(8.05,2.2)}] plot[domain=-0.489957326254:0.489957326254,variable=\t]({1.*0.85*cos(\t r)+0.*0.85*sin(\t r)},{0.*0.85*cos(\t r)+1.*0.85*sin(\t r)});
\draw [shift={(9.55,2.2)},dash pattern=on 1pt off 1pt]  plot[domain=2.65163532734:3.63154997984,variable=\t]({1.*0.85*cos(\t r)+0.*0.85*sin(\t r)},{0.*0.85*cos(\t r)+1.*0.85*sin(\t r)});
\draw (1.2,1.1) node[anchor=east] {$B_1$};
\draw (1.7,1.1) node[anchor= west] {$B_2$};
\draw (2.8,1.1) node[anchor=east] {$B_3$};
\draw (3.4,1.1) node[anchor=west] {$B_4$};
\draw (1.19539397879,2.71427007172) node[anchor=south] {$b_1$};
\draw (2.80390866895,2.71427007172) node[anchor=south] {$b_2$};
\draw (5.0040609463,2.71427007172) node[anchor=south] {$b_{g+1}$};
\draw (1.60214481998,1.70663730604) node[anchor=north west] {$a_1$};
\draw (3.20141517284,1.70663730604) node[anchor=north west] {$a_2$};
\draw (5.40156745019,1.70663730604) node[anchor=north west] {$a_{g+1}$};
\draw (7.20421322365,2.71427007172) node[anchor=south] {$b_{2g}$};
\draw (7.60171972754,1.70663730604) node[anchor=north west] {$a_{2g}$};
\draw (8.80348357651,2.71427007172) node[anchor=south] {$b_{2g+1}$};
\draw (9.2009900804,1.70663730604) node[anchor=north west] {$a_{2g+1}$};
\draw (5.0,1.1) node[anchor=east] {$B_{2g+1}$};
\draw [shift={(1.2,-1.)}] plot[domain=1.57079632679:4.71238898038,variable=\t]({1.*1.*cos(\t r)+0.*1.*sin(\t r)},{0.*1.*cos(\t r)+1.*1.*sin(\t r)});
\draw [shift={(1.2,-1.)},line width=1.2pt]  plot[domain=0.:pi,variable=\t]({1.*0.2*cos(\t r)+0.*0.2*sin(\t r)},{0.*0.2*cos(\t r)+1.*0.2*sin(\t r)});
\draw [shift={(1.2,-1.)},line width=1.2pt]  plot[domain=-3.14159265359:0.,variable=\t]({1.*0.2*cos(\t r)+0.*0.2*sin(\t r)},{0.*0.2*cos(\t r)+1.*0.2*sin(\t r)});
\draw [shift={(2.4,-1.)},line width=1.2pt]  plot[domain=0.:3.14159265359,variable=\t]({1.*0.2*cos(\t r)+0.*0.2*sin(\t r)},{0.*0.2*cos(\t r)+1.*0.2*sin(\t r)});
\draw [shift={(2.4,-1.)},line width=1.2pt]  plot[domain=-3.14159265359:0.,variable=\t]({1.*0.2*cos(\t r)+0.*0.2*sin(\t r)},{0.*0.2*cos(\t r)+1.*0.2*sin(\t r)});
\draw [shift={(5.,-1.)},line width=1.2pt]  plot[domain=0.:3.14159265359,variable=\t]({1.*0.2*cos(\t r)+0.*0.2*sin(\t r)},{0.*0.2*cos(\t r)+1.*0.2*sin(\t r)});
\draw [shift={(5.,-1.)},line width=1.2pt]  plot[domain=-3.14159265359:0.,variable=\t]({1.*0.2*cos(\t r)+0.*0.2*sin(\t r)},{0.*0.2*cos(\t r)+1.*0.2*sin(\t r)});
\draw [shift={(3.8,-1.)},line width=1.2pt]  plot[domain=0.:3.14159265359,variable=\t]({1.*0.2*cos(\t r)+0.*0.2*sin(\t r)},{0.*0.2*cos(\t r)+1.*0.2*sin(\t r)});
\draw [shift={(3.8,-1.)},line width=1.2pt]  plot[domain=-3.14159265359:0.,variable=\t]({1.*0.2*cos(\t r)+0.*0.2*sin(\t r)},{0.*0.2*cos(\t r)+1.*0.2*sin(\t r)});
\draw (1.2,0.)-- (5.,0.);
\draw (1.2,-2.)-- (5.,-2.);
\draw [shift={(3.1,-3.5)},color=ffqqqq]  plot[domain=0.95758897792:2.18400367567,variable=\t]({1.*3.30151480384*cos(\t r)+0.*3.30151480384*sin(\t r)},{0.*3.30151480384*cos(\t r)+1.*3.30151480384*sin(\t r)});
\draw [shift={(3.7,-2.7)},color=ffqqqq]  plot[domain=0.970746113393:2.1708465402,variable=\t]({1.*2.30217288664*cos(\t r)+0.*2.30217288664*sin(\t r)},{0.*2.30217288664*cos(\t r)+1.*2.30217288664*sin(\t r)});
\draw [shift={(4.4,-1.5)},color=ffqqqq]  plot[domain=0.862170054667:2.27942259892,variable=\t]({1.*0.921954445729*cos(\t r)+0.*0.921954445729*sin(\t r)},{0.*0.921954445729*cos(\t r)+1.*0.921954445729*sin(\t r)});
\draw [shift={(3.1,-5.1)},dash pattern=on 1pt off 1pt,color=ffqqqq]  plot[domain=1.15473182107:1.98686083252,variable=\t]({1.*4.70106370942*cos(\t r)+0.*4.70106370942*sin(\t r)},{0.*4.70106370942*cos(\t r)+1.*4.70106370942*sin(\t r)});
\draw [shift={(3.7,-4.7)},dash pattern=on 1pt off 1pt,color=ffqqqq]  plot[domain=1.2490457724:1.89254688119,variable=\t]({1.*4.11096095822*cos(\t r)+0.*4.11096095822*sin(\t r)},{0.*4.11096095822*cos(\t r)+1.*4.11096095822*sin(\t r)});
\draw [shift={(4.4,-2.69552707791)},dash pattern=on 1pt off 1pt,color=ffqqqq]  plot[domain=1.26423999689:1.8773526567,variable=\t]({1.*1.98822103979*cos(\t r)+0.*1.98822103979*sin(\t r)},{0.*1.98822103979*cos(\t r)+1.*1.98822103979*sin(\t r)});
\draw [shift={(8.8,-1.)},line width=1.2pt]  plot[domain=0.:3.14159265359,variable=\t]({1.*0.2*cos(\t r)+0.*0.2*sin(\t r)},{0.*0.2*cos(\t r)+1.*0.2*sin(\t r)});
\draw [shift={(8.8,-1.)},line width=1.2pt]  plot[domain=-3.14159265359:0.,variable=\t]({1.*0.2*cos(\t r)+0.*0.2*sin(\t r)},{0.*0.2*cos(\t r)+1.*0.2*sin(\t r)});
\draw [shift={(7.6,-1.)},line width=1.2pt]  plot[domain=0.:3.14159265359,variable=\t]({1.*0.2*cos(\t r)+0.*0.2*sin(\t r)},{0.*0.2*cos(\t r)+1.*0.2*sin(\t r)});
\draw [shift={(7.6,-1.)},line width=1.2pt]  plot[domain=-3.14159265359:0.,variable=\t]({1.*0.2*cos(\t r)+0.*0.2*sin(\t r)},{0.*0.2*cos(\t r)+1.*0.2*sin(\t r)});
\draw [shift={(6.2,-1.)},line width=1.2pt]  plot[domain=0.:3.14159265359,variable=\t]({1.*0.2*cos(\t r)+0.*0.2*sin(\t r)},{0.*0.2*cos(\t r)+1.*0.2*sin(\t r)});
\draw [shift={(6.2,-1.)},line width=1.2pt]  plot[domain=-3.14159265359:0.,variable=\t]({1.*0.2*cos(\t r)+0.*0.2*sin(\t r)},{0.*0.2*cos(\t r)+1.*0.2*sin(\t r)});
\draw (8.8,0.)-- (5.,0.);
\draw (8.8,-2.)-- (5.,-2.);
\draw [shift={(8.8,-1.)}] plot[domain=-1.57079632679:1.57079632679,variable=\t]({1.*1.*cos(\t r)+0.*1.*sin(\t r)},{0.*1.*cos(\t r)+1.*1.*sin(\t r)});
\draw [shift={(3.8,-1.)},color=qqqqff]  plot[domain=1.57079632679:4.71238898038,variable=\t]({1.*0.3*cos(\t r)+0.*0.3*sin(\t r)},{0.*0.3*cos(\t r)+1.*0.3*sin(\t r)});
\draw [shift={(5.,-1.)},color=qqqqff]  plot[domain=-1.57079632679:1.57079632679,variable=\t]({1.*0.3*cos(\t r)+0.*0.3*sin(\t r)},{0.*0.3*cos(\t r)+1.*0.3*sin(\t r)});
\draw [color=qqqqff] (3.8,-0.7)-- (5.,-0.7);
\draw [color=qqqqff] (3.8,-1.3)-- (5.,-1.3);
\draw [shift={(2.4,-1.)},color=qqqqff]  plot[domain=0.:pi,variable=\t]({1.*0.3*cos(\t r)+0.*0.3*sin(\t r)},{0.*0.3*cos(\t r)+1.*0.3*sin(\t r)});
\draw [shift={(3.1,-1.)},color=qqqqff]  plot[domain=pi:4.71238898038,variable=\t]({1.*0.4*cos(\t r)+0.*0.4*sin(\t r)},{0.*0.4*cos(\t r)+1.*0.4*sin(\t r)});
\draw [shift={(2.6,-1.)},color=qqqqff]  plot[domain=pi:4.71238898038,variable=\t]({1.*0.5*cos(\t r)+0.*0.5*sin(\t r)},{0.*0.5*cos(\t r)+1.*0.5*sin(\t r)});
\draw [shift={(5.,-1.)},color=qqqqff]  plot[domain=0.:3.14159265359,variable=\t]({1.*0.4*cos(\t r)+0.*0.4*sin(\t r)},{0.*0.4*cos(\t r)+1.*0.4*sin(\t r)});
\draw [shift={(5.,-1.)},color=qqqqff]  plot[domain=0.:pi,variable=\t]({1.*0.5*cos(\t r)+0.*0.5*sin(\t r)},{0.*0.5*cos(\t r)+1.*0.5*sin(\t r)});
\draw [shift={(4.2,-1.)},color=qqqqff]  plot[domain=-1.57079632679:0.,variable=\t]({1.*0.4*cos(\t r)+0.*0.4*sin(\t r)},{0.*0.4*cos(\t r)+1.*0.4*sin(\t r)});
\draw [shift={(4.9,-1.)},color=qqqqff]  plot[domain=-1.57079632679:0.,variable=\t]({1.*0.5*cos(\t r)+0.*0.5*sin(\t r)},{0.*0.5*cos(\t r)+1.*0.5*sin(\t r)});
\draw [shift={(3.9,-1.)},color=qqqqff]  plot[domain=-1.57079632679:0.,variable=\t]({1.*0.6*cos(\t r)+0.*0.6*sin(\t r)},{0.*0.6*cos(\t r)+1.*0.6*sin(\t r)});
\draw [shift={(4.8,-1.)},color=qqqqff]  plot[domain=-1.57079632679:0.,variable=\t]({1.*0.7*cos(\t r)+0.*0.7*sin(\t r)},{0.*0.7*cos(\t r)+1.*0.7*sin(\t r)});
\draw [color=qqqqff] (3.1,-1.4)-- (4.2,-1.4);
\draw [color=qqqqff] (2.6,-1.5)-- (4.9,-1.5);
\draw [shift={(1.2,-1.)},color=qqqqff]  plot[domain=0.:pi,variable=\t]({1.*0.4*cos(\t r)+0.*0.4*sin(\t r)},{0.*0.4*cos(\t r)+1.*0.4*sin(\t r)});
\draw [shift={(2.2,-1.)},color=qqqqff]  plot[domain=pi:4.71238898038,variable=\t]({1.*0.6*cos(\t r)+0.*0.6*sin(\t r)},{0.*0.6*cos(\t r)+1.*0.6*sin(\t r)});
\draw [color=qqqqff] (2.2,-1.6)-- (3.9,-1.6);
\draw [shift={(1.5,-1.)},color=qqqqff]  plot[domain=pi:4.71238898038,variable=\t]({1.*0.7*cos(\t r)+0.*0.7*sin(\t r)},{0.*0.7*cos(\t r)+1.*0.7*sin(\t r)});
\draw [color=qqqqff] (1.5,-1.7)-- (4.8,-1.7);
\draw (0.797887474898,-0.89) node[anchor=east] {$e_1$};
\draw (2.10133903417,-0.891021475194) node[anchor=east] {$e_2$};
\draw (3.6,-0.891021475194) node[anchor=east] {$e_g$};
\draw (3.0,-0.2) node[anchor=south] {$f_1$};
\draw (3.5,-0.4) node[anchor=south] {$f_2$};
\draw (4.2,-0.6) node[anchor=south] {$f_g$};
\begin{scriptsize}
\draw [fill=qqqqff] (3.6,1.6) circle (1pt);
\draw [fill=qqqqff] (3.8,1.6) circle (1pt);
\draw [fill=qqqqff] (4.,1.6) circle (1pt);
\draw [fill=qqqqff] (6.,1.6) circle (1pt);
\draw [fill=qqqqff] (6.2,1.6) circle (1pt);
\draw [fill=qqqqff] (6.4,1.6) circle (1pt);
\draw [fill=qqqqff] (2.8,-1.) circle (1pt);
\draw [fill=qqqqff] (3.,-1.) circle (1pt);
\draw [fill=qqqqff] (3.2,-1.) circle (1pt);
\draw [fill=qqqqff] (6.8,-1.) circle (1pt);
\draw [fill=qqqqff] (7.,-1.) circle (1pt);
\draw [fill=qqqqff] (7.2,-1.) circle (1pt);
\end{scriptsize}
\end{tikzpicture}
\caption{Homology basis}
\label{fig:basis}
\end{center}
\end{figure}

\begin{lemma}\label{lemma:basis}
Let $\mathcal{B}$ be a subset of $H_1(\Sigma_{2g+1};\mathbb{Z}_2)$ such that 
\begin{itemize}
\item $B_1, B_2, \cdots, B_{2g}, a_{g+1}, b_{g+1} \in \mathcal{B}$, 
      and
\item for $1 \leq i \leq g$, one of $\{ a_i, e_i\}$ and one of $\{b_i, f_i\}$ 
are in $\mathcal{B}$, where $e_i$ is a simple closed curve representing 
$a_i + a_{g+1}$ and $f_i$ is a simple closed curve representing $b_i + b_{g+1}$ as in Figure~\ref{fig:basis}.
\end{itemize}
Then $\mathcal{B}$ is a basis of $H_1(\Sigma_{2g+1};\mathbb{Z}_2)$.
\end{lemma}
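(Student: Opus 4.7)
First observe the cardinality count. The set $\mathcal{B}$ contains the $2g$ curves $B_1,\dots,B_{2g}$, the two curves $a_{g+1},b_{g+1}$, and for each $1\le i\le g$ one element from $\{a_i,e_i\}$ and one from $\{b_i,f_i\}$, giving $|\mathcal{B}|=2g+2+2g=4g+2$. Since $\dim_{\mathbb{Z}_2} H_1(\Sigma_{2g+1};\mathbb{Z}_2)=4g+2$, it suffices to show that $\mathcal{B}$ is linearly independent over $\mathbb{Z}_2$.

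Next I would perform a reduction. From the construction of $e_i$ and $f_i$ in Figure~\ref{fig:basis} we have the identities
\[
[e_i]=[a_i]+[a_{g+1}], \qquad [f_i]=[b_i]+[b_{g+1}] \qquad \text{in } H_1(\Sigma_{2g+1};\mathbb{Z}_2).
\]
Since $a_{g+1}$ and $b_{g+1}$ already belong to $\mathcal{B}$, an elementary row operation (adding $[a_{g+1}]$ or $[b_{g+1}]$ to another element) preserves both the span and linear independence. Consequently it is enough to prove the statement for the normalized set
\[
\mathcal{B}' := \{B_1,\dots,B_{2g}\}\cup\{a_1,\dots,a_{g+1}\}\cup\{b_1,\dots,b_{g+1}\},
\]
which has the same size $4g+2$.

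Then I would compute the homology classes $[B_j]\in H_1(\Sigma_{2g+1};\mathbb{Z}_2)$ directly from Figure~\ref{fig:basis}. Writing the $(4g+2)\times(4g+2)$ matrix expressing $\mathcal{B}'$ in terms of the standard symplectic basis $\{a_1,b_1,\dots,a_{2g+1},b_{2g+1}\}$, the rows corresponding to $a_1,\dots,a_{g+1},b_1,\dots,b_{g+1}$ contribute the identity block on those coordinates. Therefore the matrix has full rank over $\mathbb{Z}_2$ if and only if the $2g\times 2g$ matrix obtained by projecting $[B_1],\dots,[B_{2g}]$ onto the complementary subspace
\[
\bigl\langle a_{g+2},\dots,a_{2g+1},b_{g+2},\dots,b_{2g+1}\bigr\rangle
\]
is invertible. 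Reading off the $B_j$ from the figure, this reduction matrix is essentially a permutation (up to lower order terms supported in the already-covered span of $\{a_i,b_i\}_{i\le g+1}$), so invertibility follows.

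\textbf{Main obstacle.} The delicate step is the second: reading off the homology classes of the chain curves $B_1,\dots,B_{2g}$ precisely enough to verify the rank condition. The key feature that must be extracted from Figure~\ref{fig:basis} is that each $B_j$ contributes at least one generator with index strictly larger than $g+1$, and that the collection of such contributions yields a triangular (hence invertible) pattern on $\langle a_{g+2},\dots,a_{2g+1},b_{g+2},\dots,b_{2g+1}\rangle$. Once this geometric verification is done, linear independence of $\mathcal{B}'$, and hence of $\mathcal{B}$, is immediate.
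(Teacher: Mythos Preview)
Your plan is sound and essentially dual to the paper's argument: you aim to show linear independence and then invoke the cardinality count, whereas the paper shows spanning and then invokes the cardinality count. Your reduction from $\mathcal{B}$ to the normalized set $\mathcal{B}'$ via the relations $[e_i]=[a_i]+[a_{g+1}]$ and $[f_i]=[b_i]+[b_{g+1}]$ is exactly the same move the paper makes (phrased there as ``$a_i\in\mathcal{B}$ or $a_i=e_i-a_{g+1}\in\mathrm{Span}(\mathcal{B})$'').

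The genuine gap is precisely what you flag as the ``main obstacle'': you assert that the projection of $[B_1],\dots,[B_{2g}]$ onto $\langle a_{g+2},\dots,a_{2g+1},b_{g+2},\dots,b_{2g+1}\rangle$ is triangular, but you never verify it. This is the entire content of the lemma, and ``reading off from the figure'' is not a proof. The paper supplies the missing computation explicitly: for $1\le i\le g$,
\[
B_{2i-1}=a_i+a_{i+1}+\cdots+a_{2g+2-i}+b_i+b_{2g+2-i},\qquad B_{2i-1}=B_{2i}+a_i+a_{2g+2-i}
\]
in $H_1(\Sigma_{2g+1};\mathbb{Z}_2)$. From these one sees immediately (by descending induction on $i$ from $g$ to $1$) that $a_{2g+2-i}$ and $b_{2g+2-i}$ lie in $\mathrm{Span}(\mathcal{B}')$, which is the paper's route; equivalently, these formulas give exactly the triangular pattern you need for your projection matrix. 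Either way, the argument is not complete until these homology relations are written down.
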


\begin{proof}
For $1 \leq i \leq g$, we have $a_i \in \mathcal{B}$ or 
$a_i = e_i - a_{g+1} \in \text{Span}(\mathcal{B})$ and 
$b_i \in \mathcal{B}$ or $b_i = f_i - b_{g+1} \in \text{Span}(\mathcal{B})$.
In $H_1(\Sigma_{2g+1};\mathbb{Z}_2)$, we also get
\begin{align*}
B_{2i-1} &= B_{2i} + a_i + a_{2g+2-i} \\
B_{2i-1} &= a_{i} + a_{i+1} + \cdots + a_{2g+2-i} + b_i + b_{2g+2-i} 
\end{align*}
for $1 \leq i \leq g$.
Therefore 
\[
a_1, b_1, a_2, b_2, \cdots, a_{2g+1}, b_{2g+1} \in \text{Span}(\mathcal{B}).
\]
Since $|\mathcal{B}| = 4g+2 = \mathrm{dim} H_1(\Sigma_{2g+1};\mathbb{Z}_2)$, 
$\mathcal{B}$ is a basis of $H_1(\Sigma_{2g+1};\mathbb{Z}_2)$.
\end{proof}

\begin{figure}[htbp]
\begin{center}
\definecolor{qqqqff}{rgb}{0.,0.,1.}
\definecolor{ffqqqq}{rgb}{1.,0.,0.}
\begin{tikzpicture}[line cap=round,line join=round,>=triangle 45,x=0.25cm,y=0.25cm]
\clip(-3.21305268906,-16.5) rectangle (26.944448688,3.3);
\draw [shift={(7.,-2.)},line width=1.2pt]  plot[domain=0.:pi,variable=\t]({1.*1.*cos(\t r)+0.*1.*sin(\t r)},{0.*1.*cos(\t r)+1.*1.*sin(\t r)});
\draw [shift={(5.,0.5)}] plot[domain=-0.643501108793:0.643501108793,variable=\t]({1.*2.5*cos(\t r)+0.*2.5*sin(\t r)},{0.*2.5*cos(\t r)+1.*2.5*sin(\t r)});
\draw [shift={(9.,0.5)},dash pattern=on 2pt off 2pt]  plot[domain=2.4980915448:3.78509376238,variable=\t]({1.*2.5*cos(\t r)+0.*2.5*sin(\t r)},{0.*2.5*cos(\t r)+1.*2.5*sin(\t r)});
\draw [shift={(7.,-2.)},line width=1.2pt]  plot[domain=-3.14159265359:0.,variable=\t]({1.*1.*cos(\t r)+0.*1.*sin(\t r)},{0.*1.*cos(\t r)+1.*1.*sin(\t r)});
\draw [shift={(7.,-2.)}] plot[domain=0.:pi,variable=\t]({1.*2.*cos(\t r)+0.*2.*sin(\t r)},{0.*2.*cos(\t r)+1.*2.*sin(\t r)});
\draw [shift={(7.,-2.)}] plot[domain=-3.14159265359:0.,variable=\t]({1.*2.*cos(\t r)+0.*2.*sin(\t r)},{0.*2.*cos(\t r)+1.*2.*sin(\t r)});
\draw [shift={(2.,-2.)},line width=1.2pt]  plot[domain=0.:pi,variable=\t]({1.*1.*cos(\t r)+0.*1.*sin(\t r)},{0.*1.*cos(\t r)+1.*1.*sin(\t r)});
\draw [shift={(0.,0.5)}] plot[domain=-0.643501108793:0.643501108793,variable=\t]({1.*2.5*cos(\t r)+0.*2.5*sin(\t r)},{0.*2.5*cos(\t r)+1.*2.5*sin(\t r)});
\draw [shift={(4.,0.5)},dash pattern=on 2pt off 2pt]  plot[domain=2.4980915448:3.78509376238,variable=\t]({1.*2.5*cos(\t r)+0.*2.5*sin(\t r)},{0.*2.5*cos(\t r)+1.*2.5*sin(\t r)});
\draw [shift={(2.,-2.)},line width=1.2pt]  plot[domain=-3.14159265359:0.,variable=\t]({1.*1.*cos(\t r)+0.*1.*sin(\t r)},{0.*1.*cos(\t r)+1.*1.*sin(\t r)});
\draw [shift={(2.,-2.)}] plot[domain=0.:pi,variable=\t]({1.*2.*cos(\t r)+0.*2.*sin(\t r)},{0.*2.*cos(\t r)+1.*2.*sin(\t r)});
\draw [shift={(2.,-2.)}] plot[domain=-3.14159265359:0.,variable=\t]({1.*2.*cos(\t r)+0.*2.*sin(\t r)},{0.*2.*cos(\t r)+1.*2.*sin(\t r)});
\draw [shift={(12.,-2.)},line width=1.2pt]  plot[domain=0.:pi,variable=\t]({1.*1.*cos(\t r)+0.*1.*sin(\t r)},{0.*1.*cos(\t r)+1.*1.*sin(\t r)});
\draw [shift={(10.,0.5)}] plot[domain=-0.643501108793:0.643501108793,variable=\t]({1.*2.5*cos(\t r)+0.*2.5*sin(\t r)},{0.*2.5*cos(\t r)+1.*2.5*sin(\t r)});
\draw [shift={(14.,0.5)},dash pattern=on 2pt off 2pt]  plot[domain=2.4980915448:3.78509376238,variable=\t]({1.*2.5*cos(\t r)+0.*2.5*sin(\t r)},{0.*2.5*cos(\t r)+1.*2.5*sin(\t r)});
\draw [shift={(12.,-2.)},line width=1.2pt]  plot[domain=-3.14159265359:0.,variable=\t]({1.*1.*cos(\t r)+0.*1.*sin(\t r)},{0.*1.*cos(\t r)+1.*1.*sin(\t r)});
\draw [shift={(12.,-2.)}] plot[domain=0.:pi,variable=\t]({1.*2.*cos(\t r)+0.*2.*sin(\t r)},{0.*2.*cos(\t r)+1.*2.*sin(\t r)});
\draw [shift={(12.,-2.)}] plot[domain=-3.14159265359:0.,variable=\t]({1.*2.*cos(\t r)+0.*2.*sin(\t r)},{0.*2.*cos(\t r)+1.*2.*sin(\t r)});
\draw [shift={(17.,-2.)},line width=1.2pt]  plot[domain=0.:pi,variable=\t]({1.*1.*cos(\t r)+0.*1.*sin(\t r)},{0.*1.*cos(\t r)+1.*1.*sin(\t r)});
\draw [shift={(15.,0.5)}] plot[domain=-0.643501108793:0.643501108793,variable=\t]({1.*2.5*cos(\t r)+0.*2.5*sin(\t r)},{0.*2.5*cos(\t r)+1.*2.5*sin(\t r)});
\draw [shift={(19.,0.5)},dash pattern=on 2pt off 2pt]  plot[domain=2.4980915448:3.78509376238,variable=\t]({1.*2.5*cos(\t r)+0.*2.5*sin(\t r)},{0.*2.5*cos(\t r)+1.*2.5*sin(\t r)});
\draw [shift={(17.,-2.)},line width=1.2pt]  plot[domain=-3.14159265359:0.,variable=\t]({1.*1.*cos(\t r)+0.*1.*sin(\t r)},{0.*1.*cos(\t r)+1.*1.*sin(\t r)});
\draw [shift={(17.,-2.)}] plot[domain=0.:pi,variable=\t]({1.*2.*cos(\t r)+0.*2.*sin(\t r)},{0.*2.*cos(\t r)+1.*2.*sin(\t r)});
\draw [shift={(17.,-2.)}] plot[domain=-3.14159265359:0.,variable=\t]({1.*2.*cos(\t r)+0.*2.*sin(\t r)},{0.*2.*cos(\t r)+1.*2.*sin(\t r)});
\draw [shift={(22.,-2.)},line width=1.2pt]  plot[domain=0.:pi,variable=\t]({1.*1.*cos(\t r)+0.*1.*sin(\t r)},{0.*1.*cos(\t r)+1.*1.*sin(\t r)});
\draw [shift={(20.,0.5)}] plot[domain=-0.643501108793:0.643501108793,variable=\t]({1.*2.5*cos(\t r)+0.*2.5*sin(\t r)},{0.*2.5*cos(\t r)+1.*2.5*sin(\t r)});
\draw [shift={(24.,0.5)},dash pattern=on 2pt off 2pt]  plot[domain=2.4980915448:3.78509376238,variable=\t]({1.*2.5*cos(\t r)+0.*2.5*sin(\t r)},{0.*2.5*cos(\t r)+1.*2.5*sin(\t r)});
\draw [shift={(22.,-2.)},line width=1.2pt]  plot[domain=-3.14159265359:0.,variable=\t]({1.*1.*cos(\t r)+0.*1.*sin(\t r)},{0.*1.*cos(\t r)+1.*1.*sin(\t r)});
\draw [shift={(22.,-2.)}] plot[domain=0.:pi,variable=\t]({1.*2.*cos(\t r)+0.*2.*sin(\t r)},{0.*2.*cos(\t r)+1.*2.*sin(\t r)});
\draw [shift={(22.,-2.)}] plot[domain=-3.14159265359:0.,variable=\t]({1.*2.*cos(\t r)+0.*2.*sin(\t r)},{0.*2.*cos(\t r)+1.*2.*sin(\t r)});
\draw [shift={(4.5,-2.5989359497)}] plot[domain=0.379894704408:2.76169794918,variable=\t]({1.*1.61515456593*cos(\t r)+0.*1.61515456593*sin(\t r)},{0.*1.61515456593*cos(\t r)+1.*1.61515456593*sin(\t r)});
\draw [shift={(4.5,-1.37347612273)},dash pattern=on 2pt off 2pt]  plot[domain=3.53724909539:5.88752886538,variable=\t]({1.*1.62558671525*cos(\t r)+0.*1.62558671525*sin(\t r)},{0.*1.62558671525*cos(\t r)+1.*1.62558671525*sin(\t r)});
\draw [shift={(7.,-2.)}] plot[domain=-1.57079632679:1.57079632679,variable=\t]({1.*2.5*cos(\t r)+0.*2.5*sin(\t r)},{0.*2.5*cos(\t r)+1.*2.5*sin(\t r)});
\draw [shift={(2.,-2.)}] plot[domain=1.57079632679:4.71238898038,variable=\t]({1.*2.5*cos(\t r)+0.*2.5*sin(\t r)},{0.*2.5*cos(\t r)+1.*2.5*sin(\t r)});
\draw (2.,0.5)-- (7.,0.5);
\draw (2.,-4.5)-- (7.,-4.5);
\draw [shift={(2.,-2.)}] plot[domain=1.57079632679:4.71238898038,variable=\t]({1.*4.*cos(\t r)+0.*4.*sin(\t r)},{0.*4.*cos(\t r)+1.*4.*sin(\t r)});
\draw [shift={(22.,-2.)}] plot[domain=-1.57079632679:1.57079632679,variable=\t]({1.*4.*cos(\t r)+0.*4.*sin(\t r)},{0.*4.*cos(\t r)+1.*4.*sin(\t r)});
\draw (2.,2.)-- (22.,2.);
\draw (2.,-6.)-- (22.,-6.);
\draw (2.0200021116,3.8) node[anchor=north ] {$b_1$};
\draw (6.98331181945,3.8) node[anchor=north ] {$b_2$};
\draw (12.0005705459,3.8) node[anchor=north ] {$b_3$};
\draw (17.0178292723,3.8) node[anchor=north ] {$b_4$};
\draw (21.9811389801,3.8) node[anchor=north ] {$b_5$};
\draw (1.8,-2.7) node[anchor=north west] {$a_1$};
\draw (6.7,-2.7) node[anchor=north west] {$a_2$};
\draw (11.7,-2.7) node[anchor=north west] {$a_3$};
\draw (16.7,-2.7) node[anchor=north west] {$a_4$};
\draw (21.7,-2.7) node[anchor=north west] {$a_5$};
\draw (4.0,-2.7) node[anchor=north west] {$c_2$};
\draw (4.01611579845,2.20617102412) node[anchor=north west] {$d_1$};
\draw [shift={(2.,-12.)}] plot[domain=1.57079632679:4.71238898038,variable=\t]({1.*4.*cos(\t r)+0.*4.*sin(\t r)},{0.*4.*cos(\t r)+1.*4.*sin(\t r)});
\draw [shift={(2.,-12.)},line width=1.2pt]  plot[domain=0.:pi,variable=\t]({1.*1.*cos(\t r)+0.*1.*sin(\t r)},{0.*1.*cos(\t r)+1.*1.*sin(\t r)});
\draw [shift={(2.,-12.)},line width=1.2pt]  plot[domain=-3.14159265359:0.,variable=\t]({1.*1.*cos(\t r)+0.*1.*sin(\t r)},{0.*1.*cos(\t r)+1.*1.*sin(\t r)});
\draw [shift={(7.,-12.)},line width=1.2pt]  plot[domain=0.:pi,variable=\t]({1.*1.*cos(\t r)+0.*1.*sin(\t r)},{0.*1.*cos(\t r)+1.*1.*sin(\t r)});
\draw [shift={(7.,-12.)},line width=1.2pt]  plot[domain=-3.14159265359:0.,variable=\t]({1.*1.*cos(\t r)+0.*1.*sin(\t r)},{0.*1.*cos(\t r)+1.*1.*sin(\t r)});
\draw [shift={(12.,-12.)},line width=1.2pt]  plot[domain=0.:pi,variable=\t]({1.*1.*cos(\t r)+0.*1.*sin(\t r)},{0.*1.*cos(\t r)+1.*1.*sin(\t r)});
\draw [shift={(12.,-12.)},line width=1.2pt]  plot[domain=-3.14159265359:0.,variable=\t]({1.*1.*cos(\t r)+0.*1.*sin(\t r)},{0.*1.*cos(\t r)+1.*1.*sin(\t r)});
\draw [shift={(17.,-12.)},line width=1.2pt]  plot[domain=0.:pi,variable=\t]({1.*1.*cos(\t r)+0.*1.*sin(\t r)},{0.*1.*cos(\t r)+1.*1.*sin(\t r)});
\draw [shift={(17.,-12.)},line width=1.2pt]  plot[domain=-3.14159265359:0.,variable=\t]({1.*1.*cos(\t r)+0.*1.*sin(\t r)},{0.*1.*cos(\t r)+1.*1.*sin(\t r)});
\draw [shift={(22.,-12.)},line width=1.2pt]  plot[domain=0.:pi,variable=\t]({1.*1.*cos(\t r)+0.*1.*sin(\t r)},{0.*1.*cos(\t r)+1.*1.*sin(\t r)});
\draw [shift={(22.,-12.)},line width=1.2pt]  plot[domain=-3.14159265359:0.,variable=\t]({1.*1.*cos(\t r)+0.*1.*sin(\t r)},{0.*1.*cos(\t r)+1.*1.*sin(\t r)});
\draw [shift={(22.,-12.)}] plot[domain=-1.57079632679:1.57079632679,variable=\t]({1.*4.*cos(\t r)+0.*4.*sin(\t r)},{0.*4.*cos(\t r)+1.*4.*sin(\t r)});
\draw (2.,-8.)-- (22.,-8.);
\draw (2.,-16.)-- (22.,-16.);
\draw [shift={(7.,-16.25)}] plot[domain=0.80978357257:2.33180908102,variable=\t]({1.*7.25*cos(\t r)+0.*7.25*sin(\t r)},{0.*7.25*cos(\t r)+1.*7.25*sin(\t r)});
\draw [shift={(7.,-17.47547805)},dash pattern=on 2pt off 2pt]  plot[domain=0.91327317777:2.22831947582,variable=\t]({1.*8.18118670953*cos(\t r)+0.*8.18118670953*sin(\t r)},{0.*8.18118670953*cos(\t r)+1.*8.18118670953*sin(\t r)});
\draw [shift={(8.64412906171,-12.9234837532)},dash pattern=on 2pt off 2pt]  plot[domain=0.373584362287:2.27805096505,variable=\t]({1.*2.53040512967*cos(\t r)+0.*2.53040512967*sin(\t r)},{0.*2.53040512967*cos(\t r)+1.*2.53040512967*sin(\t r)});
\draw [shift={(8.83333333333,-12.1666666667)}] plot[domain=0.0767718912698:2.57486343607,variable=\t]({1.*2.1730674684*cos(\t r)+0.*2.1730674684*sin(\t r)},{0.*2.1730674684*cos(\t r)+1.*2.1730674684*sin(\t r)});
\draw [shift={(2.,-12.)},color=ffqqqq]  plot[domain=0.:pi,variable=\t]({1.*2.*cos(\t r)+0.*2.*sin(\t r)},{0.*2.*cos(\t r)+1.*2.*sin(\t r)});
\draw [shift={(12.,-12.)},color=qqqqff]  plot[domain=0.:pi,variable=\t]({1.*1.5*cos(\t r)+0.*1.5*sin(\t r)},{0.*1.5*cos(\t r)+1.*1.5*sin(\t r)});
\draw [shift={(7.,-12.)},color=qqqqff]  plot[domain=0.:pi,variable=\t]({1.*1.5*cos(\t r)+0.*1.5*sin(\t r)},{0.*1.5*cos(\t r)+1.*1.5*sin(\t r)});
\draw [shift={(12.,-12.)},color=ffqqqq]  plot[domain=0.:pi,variable=\t]({1.*2.*cos(\t r)+0.*2.*sin(\t r)},{0.*2.*cos(\t r)+1.*2.*sin(\t r)});
\draw [shift={(9.5,-12.)},color=qqqqff]  plot[domain=-3.14159265359:0.,variable=\t]({1.*1.*cos(\t r)+0.*1.*sin(\t r)},{0.*1.*cos(\t r)+1.*1.*sin(\t r)});
\draw [shift={(7.,-12.)},color=qqqqff]  plot[domain=pi:4.71238898038,variable=\t]({1.*1.5*cos(\t r)+0.*1.5*sin(\t r)},{0.*1.5*cos(\t r)+1.*1.5*sin(\t r)});
\draw [shift={(12.,-12.)},color=qqqqff]  plot[domain=-1.57079632679:0.,variable=\t]({1.*1.5*cos(\t r)+0.*1.5*sin(\t r)},{0.*1.5*cos(\t r)+1.*1.5*sin(\t r)});
\draw [color=qqqqff] (7.,-13.5)-- (12.,-13.5);
\draw [shift={(7.,-10.75)},color=ffqqqq]  plot[domain=3.53638377329:5.88839418748,variable=\t]({1.*3.25*cos(\t r)+0.*3.25*sin(\t r)},{0.*3.25*cos(\t r)+1.*3.25*sin(\t r)});
\draw [shift={(3.33336991808,-11.9444200546)},color=ffqqqq]  plot[domain=3.15826490927:4.300925605,variable=\t]({1.*3.33383325035*cos(\t r)+0.*3.33383325035*sin(\t r)},{0.*3.33383325035*cos(\t r)+1.*3.33383325035*sin(\t r)});
\draw [shift={(10.5067060694,-11.8378040463)},color=ffqqqq]  plot[domain=5.15357744114:6.23678796613,variable=\t]({1.*3.49705733624*cos(\t r)+0.*3.49705733624*sin(\t r)},{0.*3.49705733624*cos(\t r)+1.*3.49705733624*sin(\t r)});
\draw [shift={(7.,16.1787769456)},color=ffqqqq]  plot[domain=4.5533773666:4.87140059417,variable=\t]({1.*31.57714572*cos(\t r)+0.*31.57714572*sin(\t r)},{0.*31.57714572*cos(\t r)+1.*31.57714572*sin(\t r)});
\draw (8.00834317215,-7.5) node[anchor=north west] {$f_1$};
\draw (8.8,-9.9) node[anchor=north west] {$f_2$};
\draw (2.0200021116,-13.3311463222) node[anchor=north west] {$e_1$};
\draw (10.004456859,-13.3311463222) node[anchor=north west] {$e_2$};
\end{tikzpicture}

\caption{Simple closed curves on $\Sigma_{2g+1}$ with $g=2$}
\label{fig:SCC}
\end{center}
\end{figure}
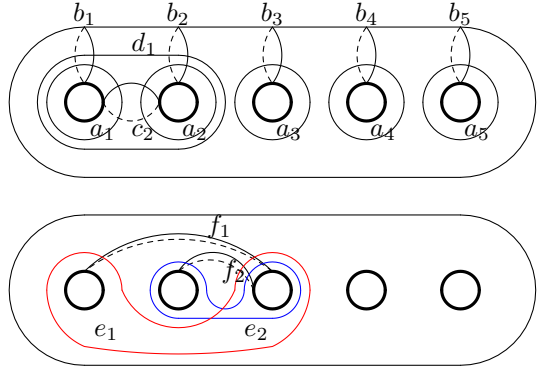

\begin{proposition}\label{proposition:step1}
For each integer $n \in \mathbb{Z}$,
a knot surgery $4$-manifold  $E(2)_{K_n}$ admits at least 
two nonisomorphic genus $5$ Lefschetz fibrations over $S^2$.
\end{proposition}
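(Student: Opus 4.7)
The approach is to exhibit two monodromy factorizations $W_+$ and $W_-$ corresponding to genus-$5$ Lefschetz fibration structures on $E(2)_{K_n}$ with respect to the same chosen generic fiber $F = \Sigma_5 = \Sigma_2 \sharp \Sigma_1 \sharp \Sigma_2$, and then to show by Humphries' graph method that the two monodromy groups differ. By Lemma~\ref{lemma:mon-Group}, this directly implies that the two Lefschetz fibrations are nonisomorphic.

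To produce the two factorizations, I would use the two unknotted curves $c$ (red) and $d$ (blue) displayed in Figure~\ref{fig:Kn}, each of which is a Stallings twist curve presenting $K_n$ as an $n$-fold Stallings twist of $K_0 = 3_1\sharp 3_1^*$. By Lemma~\ref{lemma:Harer}(2) the corresponding monodromies on the genus-$2$ fiber surface $\Sigma_2^1$ are
\[
\phi^c_{K_n} \;=\; t_c^{\,n}\circ\phi_{K_0}, \qquad \phi^d_{K_n} \;=\; t_d^{\,n}\circ\phi_{K_0},
\]
where $\phi_{K_0}$ is written as an explicit product of Dehn twists coming from the Hopf plumbing description of $3_1\sharp 3_1^*$. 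Extending each via $\Phi^{\bullet}_{K_n} = \phi^{\bullet}_{K_n}\oplus\mathrm{id}\oplus\mathrm{id}$ and substituting into Theorem~\ref{theorem:FS2004} yields
\[
W_+ \;=\; \Phi^c_{K_n}(\eta_{1,2}^2)\cdot\eta_{1,2}^2, \qquad W_- \;=\; \Phi^d_{K_n}(\eta_{1,2}^2)\cdot\eta_{1,2}^2,
\]
both taken with respect to the same fixed generic fiber $F$.

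Next, I would apply Humphries' method. Using Lemma~\ref{lemma:basis}, I would select a basis $\mathcal{B}$ of $H_1(\Sigma_5;\mathbb{Z}_2)$ whose associated graph $\Gamma$ simultaneously satisfies $\chi_\Gamma(c)=1$, $\chi_\Gamma(d)=0$, and $\chi_\Gamma(\gamma)=1$ for each curve $\gamma \in \{B_0,B_1,\ldots,B_5,b_3,b_3'\}$ and for each $\Phi_{K_0}$-image thereof. Corollary~\ref{corollary:Humphries}(1) then shows that the $t_c^{\,n}$-factor in $W_+$ preserves $\chi_\Gamma=1$ on every vanishing cycle, so by Lemma~\ref{lemma:chi=1} the monodromy group $G_F(W_+)$ sits inside the proper subgroup $G_{\Gamma,5}\subset\mathcal{M}_5$. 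On the other hand, Corollary~\ref{corollary:Humphries}(2) applied to $t_d^{\,n}$ produces at least one vanishing cycle of $W_-$ with $\chi_\Gamma=0$, and Lemma~\ref{lemma:chi=1} places the corresponding Dehn twist outside $G_{\Gamma,5}$. Hence $G_F(W_+)\neq G_F(W_-)$.

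The main obstacle is the existence of a basis $\mathcal{B}$ enjoying all four $\chi_\Gamma$-properties above: $\chi_\Gamma(c)=1$, $\chi_\Gamma(d)=0$, and $\chi_\Gamma=1$ on all reference vanishing cycles of $\Phi_{K_0}(\eta_{1,2}^2)\cdot\eta_{1,2}^2$. Confirming this requires explicit intersection computations on $\Sigma_5$ using the positions of $c,d,B_i,b_3,b_3',a_i,b_i,e_i,f_i$ drawn in Figures~\ref{fig:generator},~\ref{fig:basis} and~\ref{fig:SCC}, together with Corollary~\ref{corollary:Humphries} to propagate $\chi_\Gamma$ through the short Hopf-plumbing factorization of $\phi_{K_0}$. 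Since $\phi_{K_0}$ involves only a few Dehn twists, the verification reduces to a finite combinatorial check, and the freedom offered by Lemma~\ref{lemma:basis} in choosing between $\{a_i,e_i\}$ and $\{b_i,f_i\}$ is precisely what breaks the $c\leftrightarrow d$ symmetry.
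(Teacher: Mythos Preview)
Your strategy has a genuine gap: it breaks down whenever $n$ is even. Humphries' invariant $\chi_\Gamma$ is computed from $\mathbb{Z}_2$-homology classes, and Corollary~\ref{corollary:Humphries}(2) gives
\[
\chi_\Gamma\bigl(t_d^{\,n}(\gamma)\bigr)=\chi_\Gamma(\gamma)+n\cdot i_2(d,\gamma)\pmod 2.
\]
Thus for even $n$ the map $t_d^{\,n}$ acts trivially on $\chi_\Gamma$, and every vanishing cycle of $W_-$ has the same $\chi_\Gamma$-value as the corresponding cycle of $\Phi_{K_0}(\eta_{1,2}^2)\cdot\eta_{1,2}^2$. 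If you have arranged the latter to all satisfy $\chi_\Gamma=1$, then $G_F(W_-)\le G_{\Gamma,5}$ as well, and the argument yields nothing. The extreme case $n=0$ makes the failure transparent: there $W_+$ and $W_-$ are literally the same factorization.

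The paper circumvents this by choosing a different pair of presentations. Writing $K_{p,q}$ for the knot obtained from $K_0$ by $p$ Stallings twists along $c$ and $q$ along $d$ (so $K_{p,q}\sim K_{p+q}$), it compares the factorizations coming from $(p,q)=(n,0)$ and $(p,q)=(n-1,1)$. These have distinct parities $(\varepsilon_p,\varepsilon_q)$ for \emph{every} $n$, and the paper builds a separate basis $\mathcal{B}_{\varepsilon_p+2\varepsilon_q}$ for each parity class, always with $\chi_\Gamma(c)=\chi_\Gamma(d)=0$. The distinction is then detected not by asymmetrizing $c$ versus $d$ in a single graph, as you propose, but by checking that certain explicit cycles such as $\Phi_{K_{r,s}}(B_1)$ or $\Phi_{K_{r,s}}(B_2)$ fall outside $G_{\Gamma(\mathcal{B}_{\varepsilon_p+2\varepsilon_q})}$ precisely when $(r,s)\not\equiv(p,q)\pmod 2$. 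Your idea of using the $\{a_i,e_i\}$ and $\{b_i,f_i\}$ freedom from Lemma~\ref{lemma:basis} is exactly right, but it should be deployed to produce this family of bases rather than a single one breaking the $c\leftrightarrow d$ symmetry.
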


\begin{proof}
Let $(p, q) \in \mathbb{Z}^2$ and let $K_{p,q}$ be a knot obtained from $K_0$ by performing $\textrm{sign}(p)$ Stallings twists $|p|$ times along $c_2$  
and $\textrm{sign}(q)$ Stallings twists $|q|$ times along $d_1$ in Figure~\ref{fig:SCC} (corresponding to $c$ and $d$ in Figure~\ref{fig:Kn}), then we have
\[
K_{p,q} \sim K_{p+q}
\]
and 
\[
\phi_{K_{p,q}} = t_{d_1}^q \circ t_{c_2}^p \circ t_{a_2}^{-1}\circ t_{b_2}^{-1} 
\circ t_{a_1} \circ t_{b_1}.
\]
Since $K_n \sim K_{p,q}$ whenever $n=p+q$,
\[
\phi_{K_{p,q}} =  t_{d_1}^q \circ t_{c_2}^p \circ t_{a_2}^{-1}\circ t_{b_2}^{-1} 
\circ t_{a_1} \circ t_{b_1}
\]
is a monodromy of $K_n$ and 
\[
\Phi_{K_{p,q}} (\eta_{1, 2}^2) \cdot \eta_{1,2}^2
\]
is a monodromy factorization of $E(2)_{K_n}$.  Actually 
\begin{align*}
\phi_{K_{p,q}} &= t_{d_1}^q \circ t_{c_2}^{p+q} \circ t_{c_2}^{-q} \circ t_{a_2}^{-1}\circ t_{b_2}^{-1} 
\circ t_{a_1} \circ t_{b_1} \\
&= t_{d_1}^q  \circ (t_{c_2}^{p+q} \circ t_{a_2}^{-1}\circ t_{b_2}^{-1} 
\circ t_{a_1} \circ t_{b_1}) \circ t_{(t_{a_2}^{-1}\circ t_{b_2}^{-1} 
\circ t_{a_1} \circ t_{b_1})^{-1}(c_2)}^{-q} \\
&=  t_{d_1}^q \circ (t_{c_2}^n  \circ t_{a_2}^{-1}\circ t_{b_2}^{-1} 
\circ t_{a_1} \circ t_{b_1}) \circ t_{d_1}^{-q}
\end{align*}
because $t_{b_1}^{-1}\circ t_{b_2} \circ t_{a_1}^{-1} \circ t_{a_2} (c_2) = d_1$. 

Let $\varepsilon_p \equiv p \pmod{2}$ and $\varepsilon_q \equiv q \pmod{2}$  
with $\varepsilon_p, \varepsilon_q \in \{ 0, 1 \}$.
In order to use Humphries' graph method, we want to find a basis 
$\mathcal{B}_{\varepsilon_p + 2 \varepsilon_q}$ of $H_1(\Sigma_5;\mathbb{Z}_2)$ such that
\begin{itemize}
\item  $\{B_1, B_2, B_3, B_4, b_3, a_3\} \subset 
       \mathcal{B}_{\varepsilon_p + 2 \varepsilon_q}$ and
\item $G_F(\Phi_{K_{p,q}} (\eta_{1, 2}^2) \cdot \eta_{1,2}^2) 
      \leq G_{\Gamma(\mathcal{B}_{\varepsilon_p + 2 \varepsilon_q})}$.
\end{itemize}

In $H_1(\Sigma_5;\mathbb{Z}_2)$, we get 
\begin{align*}
\Phi_{K_{p,q}}(B_0) &= B_0 + b_1 + b_2 + a_1 + a_2, \\
\Phi_{K_{p,q}}(B_1) &= B_1 + b_1 + b_2 + a_2 + \varepsilon_p c_2 + \varepsilon_q d_1, \\
\Phi_{K_{p,q}}(B_2) &= B_2 + b_2 + a_1 + a_2 + \varepsilon_p c_2,  \\
\Phi_{K_{p,q}}(B_3) &= B_3 + b_2 + \varepsilon_p c_2, \\
\Phi_{K_{p,q}}(B_4) &= B_4 + a_2  + \varepsilon_p c_2 + \varepsilon_q d_1
\end{align*}
because we have, for $0 \leq i \leq 4$,
\begin{align*}
\Phi_{K_{p,q}}(B_i) &= B_i + i_2(B_i, b_1) b_1 + i_2(t_{b_1}(B_i), a_1) a_1 + i_2(t_{a_1}\circ t_{b_1}(B_i), b_2)b_2 \\
& + i_2(t_{b_2}^{-1} \circ t_{a_1} \circ t_{b_1}(B_i), a_2) a_2\\
& + \varepsilon_p i_2(t_{a_2}^{-1}\circ t_{b_2}^{-1}\circ t_{a_1} \circ t_{b_1}(B_i), c_2) c_2  \\
&+\varepsilon_q i_2(t_{c_2}^p \circ t_{a_2}^{-1}\circ t_{b_2}^{-1}\circ t_{a_1} \circ t_{b_1}(B_i), d_1) d_1.
\end{align*}

Since $\Phi_{K_{p,q}}(B_i) \in G_F(\Phi_{K_{p,q}} (\eta_{1, 2}^2) \cdot \eta_{1,2}^2)$ 
for $0 \leq i \leq 4$,  Corollary~\ref{corollary:Humphries} implies that
\begin{enumerate}[label=(\alph*)]
\item even number of $\{ b_1, b_2, a_1, a_2\}$ has $\chi_\Gamma = 0$,
\item even number of $\{b_1, b_2,  a_2,  \varepsilon_p c_2, \varepsilon_q d_1\}$ 
has $\chi_\Gamma = 0$,
\item even number of $\{b_2, a_1, a_2,  \varepsilon_p c_2\}$ has $\chi_\Gamma = 0$,
\item even number of $\{b_2,  \varepsilon_p c_2\}$ has $\chi_\Gamma = 0$,
\item even number of $\{a_2,  \varepsilon_p c_2, \varepsilon_q d_1\}$ has 
$\chi_\Gamma = 0$.
\end{enumerate}
As a convention, if $\varepsilon_p = 0$ or $\varepsilon_q = 0$, then the corresponding 
$c_2$ or $d_1$ is omitted from the set. 
We may assume that $\chi_\Gamma(c_2) = \chi_\Gamma(d_1) = 0$ for each graph 
$\Gamma(\mathcal{B}_{\varepsilon_p + 2 \varepsilon_q})$ ~\cite{Park-Yun:2011}.
Then (a)--(e) has a unique solution for each 
$(\varepsilon_p, \varepsilon_q)\in \{0, 1\} \times \{0, 1\}$ as follows:
\begin{itemize}
\item $a_1, a_2, b_1, b_2 \in G_{\Gamma(\mathcal{B}_0)}$,
\item $a_1, a_2, b_1, b_2 \not\in G_{\Gamma(\mathcal{B}_1)}$,
\item $b_1, b_2 \in G_{\Gamma(\mathcal{B}_2)}$ and 
$a_1, a_2 \not\in G_{\Gamma(\mathcal{B}_2)}$,
\item $a_1, a_2 \in G_{\Gamma(\mathcal{B}_3)}$ and 
$b_1, b_2 \not\in G_{\Gamma(\mathcal{B}_3)}$.
\end{itemize}

Let $e_i$ be a simple closed curve on $\Sigma_5$ representing 
$a_i + a_3 \in H_1(\Sigma_5; \mathbb{Z}_2)$ 
and let $f_i$ be a simple closed curve on $\Sigma_5$ representing 
$b_i + b_3 \in H_1(\Sigma_5; \mathbb{Z}_2)$ for $1 \leq i \leq 2$ as in Figure~\ref{fig:SCC}. 
Then, by Lemma~\ref{lemma:basis}, we can select a basis $\mathcal{B}_j$ as follows:
\begin{itemize}
\item $\mathcal{B}_0 = \{ B_1, B_2, B_3, B_4, b_3, a_3, a_1, a_2, b_1, b_2\}$,
\item $\mathcal{B}_1 = \{ B_1, B_2, B_3, B_4, b_3, a_3, e_1, e_2, f_1, f_2\}$,
\item $\mathcal{B}_2 = \{ B_1, B_2, B_3, B_4, b_3, a_3, e_1, e_2, b_1, b_2\}$,
\item $\mathcal{B}_3 = \{ B_1, B_2, B_3, B_4, b_3, a_3, a_1, a_2, f_1, f_2\}$.
\end{itemize}

Now we will show that 
\[
G_F(\Phi_{K_{p,q}} (\eta_{1, 2}^2) \cdot \eta_{1,2}^2) 
\leq G_{\Gamma(\mathcal{B}_{\varepsilon_p + 2 \varepsilon_q})}
\]
for each $(p,q) \in \mathbb{Z}^2$. To do this we only need to show that 
$B_0, B_5 \in G_{\Gamma(\mathcal{B}_{\varepsilon_p + 2 \varepsilon_q})}$ 
for each $(\varepsilon_p, \varepsilon_q)\in \{0, 1\} \times \{0, 1\}$. 
Since $B_5 = a_3 \in G_{\Gamma(\mathcal{B}_{\varepsilon_p + 2 \varepsilon_q})}$, 
it remains to show that 
\[
B_0 = B_1 + B_2 + B_3 + B_4 + a_3 
\in G_{\Gamma(\mathcal{B}_{\varepsilon_p + 2 \varepsilon_q})}.
\]
But it is clear because $\overline{B_0}$ is isotopic to disjoint union of five vertices
\[
\overline{B_0} \simeq B_1\cup B_2 \cup B_3 \cup B_4 \cup a_3
\]
in each graph $\Gamma(\mathcal{B}_j)$ ($0 \leq j \leq 3$) and therefore 
$\chi_{\Gamma(\mathcal{B}_j)}(B_0) = 1$.

Let us observe that
\begin{itemize}
\item $\chi_{\Gamma(\mathcal{B}_j)}(c_2) = 0 = \chi_{\Gamma(\mathcal{B}_j)}(d_1)$ 
for $0 \leq j \leq 3$
\item $i_2(\Phi_{K_{p,q}}(B_1), c_2) = 1$  and $i_2(\Phi_{K_{p,q}}(B_1), d_1) = 1$ 
for each $(p, q) \in \mathbb{Z}^2$
\item $i_2(\Phi_{K_{p,q}}(B_2), c_2) = 1$  and $i_2(\Phi_{K_{p,q}}(B_2), d_1) = 0$ 
for each $(p, q) \in \mathbb{Z}^2$.
\end{itemize}
It implies that
\begin{itemize}
\item $\Phi_{K_{p,q}}(B_1), \Phi_{K_{p,q}}(B_2) 
\in G_F(\Phi_{K_{p,q}} (\eta_{1, 2}^2) \cdot \eta_{1,2}^2) 
\leq G_{\Gamma(\mathcal{B}_{\varepsilon_p + 2 \varepsilon_q})}$
\item if $(p-r, q-s) \equiv (1,0) \text{ or } (0,1) \pmod{2}$, then
\(
\Phi_{K_{r,s}}(B_1) \not\in G_{\Gamma(\mathcal{B}_{\varepsilon_p + 2 \varepsilon_q})}
\)
\item  if $(p-r, q-s) \equiv (1,1) \pmod{2}$, then
\(
\Phi_{K_{r,s}}(B_2) \not\in G_{\Gamma(\mathcal{B}_{\varepsilon_p + 2 \varepsilon_q})}
\).
\end{itemize}
Therefore, whenever $(r, s) \not\equiv (p,q) \pmod{2}$, we have
\[
G_F(\Phi_{K_{p,q}} (\eta_{1, 2}^2) \cdot \eta_{1,2}^2) 
\ne G_F(\Phi_{K_{r,s}} (\eta_{1, 2}^2) \cdot \eta_{1,2}^2).
\]

Finally, since $(n, 0) \not\equiv (n-1, 1) \pmod{2}$ for each integer $n$, it is clear that $E(2)_{K_n}$ has at least two nonisomorphic genus $5$ 
Lefschetz fibration structures.
\end{proof}

Now we extend this result on $E(2)_K$ to the case of a family of connected sums of Stallings knots, which is following:

\begin{theorem}\label{theorem:main}
For each integer $n > 0$ and $(m_1, m_2, \cdots, m_n) \in \mathbb{Z}^n$,  a knot surgery $4$-manifold
\[
E(2)_{K_{m_1} \sharp K_{m_2} \sharp \cdots \sharp K_{m_n}}
\]
admits at least $2^n$ nonisomorphic genus $(4n+1)$  Lefschetz fibrations over $S^2$.
Here $K_{m_i} (1 \leq i \leq n)$ denotes a knot obtained by performing $|m_i|$ left/right handed full twist on a slice knot $K_0=3_1 \sharp 3^{*}_1$ as in Figure~\ref{fig:Kn}.
\end{theorem}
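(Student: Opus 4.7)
The plan is to generalize the argument of Proposition~\ref{proposition:step1} by performing the Stallings-twist realization on each summand independently. First, I write $K_{m_i} = K_{p_i, q_i}$ with $p_i + q_i = m_i$, so that $K := K_{m_1}\sharp\cdots\sharp K_{m_n}$ is realized as $K_{p_1,q_1}\sharp \cdots\sharp K_{p_n,q_n}$. The monodromy $\phi_K$ factors as the commuting composition $\phi_{K_{p_1,q_1}}\circ \cdots\circ \phi_{K_{p_n,q_n}}$, with each factor supported on a distinct genus-$2$ piece of the fiber surface $\Sigma^1_{2n}$ of $K$, which decomposes as a boundary connected sum of $n$ copies of $\Sigma^1_2$. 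Theorem~\ref{theorem:FS2004} then supplies a monodromy factorization $\Phi_K(\eta_{1,2n}^2)\cdot \eta_{1,2n}^2$ for $E(2)_K$ on the generic fiber $\Sigma_{4n+1}$.

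Next, for each parity vector $\boldsymbol{\varepsilon}=(\varepsilon_1,\ldots,\varepsilon_n)\in\{0,1\}^n$, representing the parity class of $(q_1,\ldots,q_n)$ (the parity of $p_i$ being then forced by $m_i$), I construct a basis $\mathcal{B}_{\boldsymbol{\varepsilon}}$ of $H_1(\Sigma_{4n+1};\mathbb{Z}_2)$ consisting of $\{B_1,\ldots,B_{4n},a_{2n+1},b_{2n+1}\}$ together with, for each summand $i$, the local basis elements chosen according exactly to the prescription of Proposition~\ref{proposition:step1} applied with parameters $(\varepsilon_{p_i},\varepsilon_{q_i})$ in the $i$-th piece. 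Concretely, the local elements are one pair from $\{\{a_{2i-1},a_{2i}\},\{e_{2i-1},e_{2i}\}\}$ and one pair from $\{\{b_{2i-1},b_{2i}\},\{f_{2i-1},f_{2i}\}\}$, where $e_k, f_k$ are as in Lemma~\ref{lemma:basis}. That lemma shows this collection is indeed a basis.

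The central step is the containment $G_F(\Phi_K(\eta_{1,2n}^2)\cdot \eta_{1,2n}^2)\leq G_{\Gamma(\mathcal{B}_{\boldsymbol{\varepsilon}})}$ together with the pairwise distinctness of these groups for different $\boldsymbol{\varepsilon}$. Since the supports of the factors $\phi_{K_{p_i,q_i}}$ are disjoint, each $\Phi_K(B_k)$ decomposes in $H_1(\Sigma_{4n+1};\mathbb{Z}_2)$ as $B_k$ plus a sum of local correction terms, one per summand that $B_k$ traverses, each correction being exactly of the form computed in Proposition~\ref{proposition:step1}. Enforcing $\chi_\Gamma(\Phi_K(B_k))=1$ via Corollary~\ref{corollary:Humphries}, together with the convention $\chi_\Gamma(c_2^{(i)})=\chi_\Gamma(d_1^{(i)})=0$, produces $n$ decoupled copies of the local parity system (a)--(e) of Proposition~\ref{proposition:step1}, each having the unique local solution specified by $\mathcal{B}_{\boldsymbol{\varepsilon}}$. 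The vanishing cycles $B_0$, $B_{4n+1}$, $b_{2n+1}$, $b'_{2n+1}$ are handled exactly as in the single-summand case, since $\overline{B_0}$ again decomposes into disjoint vertices. For distinctness, if $\boldsymbol{\varepsilon}$ and $\boldsymbol{\varepsilon}'$ disagree in the $i$-th coordinate, the same local analysis provides some $\Phi_K(B_{2i-1})$ or $\Phi_K(B_{2i})$ that lies in $G_{\Gamma(\mathcal{B}_{\boldsymbol{\varepsilon}})}$ but not in $G_{\Gamma(\mathcal{B}_{\boldsymbol{\varepsilon}'})}$, since the relevant $\mathbb{Z}_2$-intersection numbers with $c_2^{(i)}$ or $d_1^{(i)}$ flip when the local basis swaps $\{a,b\}$-type elements for $\{e,f\}$-type ones.

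The principal challenge is verifying that no cross-interaction arises between different summands in either the $\chi_\Gamma$ computation or the decoupling of the constraints (a)--(e). This is where the boundary connected sum decomposition of $\Sigma^1_{2n}$ becomes essential: the curves $\{c_2^{(i)},d_1^{(i)},a_{2i-1},a_{2i},b_{2i-1},b_{2i}\}$ all lie inside the $i$-th local piece and are disjoint from the corresponding curves of any other summand, so the constraint system genuinely splits into $n$ independent blocks. Granted this, the $n$-summand statement reduces to $n$ parallel copies of Proposition~\ref{proposition:step1}, and since for each fixed $m_i$ there are exactly two allowable parity classes for $(p_i,q_i)$, we obtain the desired $2^n$ mutually nonisomorphic genus-$(4n+1)$ Lefschetz fibration structures on $E(2)_K$.
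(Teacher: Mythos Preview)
Your approach is essentially the same as the paper's, and the overall architecture is correct. However, the justification you give for the ``principal challenge''---the decoupling of the constraint system---does not go through as stated. Disjointness of the local curves $\{c_2^{(i)}, d_1^{(i)}, a_{2i-1}, a_{2i}, b_{2i-1}, b_{2i}\}$ is not sufficient, because the curves $B_k$ are \emph{not} local: $B_{4(j-1)+k}$ traverses summands $j, j{+}1, \ldots, n$, so in $H_1(\Sigma_{4n+1};\mathbb{Z}_2)$ one has
\[
\Phi_K(B_{4(j-1)+k}) = B_{4(j-1)+k} + (\text{local $j$-terms}) + \sum_{i=j+1}^{n}(a_{2i-1}+a_{2i}+b_{2i-1}+b_{2i}),
\]
and the constraint $\chi_\Gamma(\Phi_K(B_{4(j-1)+k}))=1$ genuinely couples block $j$ to all later blocks. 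The paper resolves this by an inductive argument: one first solves the system for the last block $j=n$, observes that in each of the four cases the number of elements among $\{a_{2n-1},a_{2n},b_{2n-1},b_{2n}\}$ with $\chi_\Gamma=0$ is always \emph{even} (namely $0$, $4$, $2$, or $2$), so the tail sum contributes trivially to the parity conditions for block $n{-}1$, and so on down to $j=1$. This evenness is the actual mechanism for decoupling, and it needs to be checked; your disjointness argument does not supply it.

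A minor point: in your distinctness step, the curves you should be testing for the $i$-th summand are $\Phi_K(B_{4(i-1)+1})$ and $\Phi_K(B_{4(i-1)+2})$, not $\Phi_K(B_{2i-1})$ and $\Phi_K(B_{2i})$.
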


\begin{proof}
Let us decompose each $m_i$ as a sum of two integers $(p_i, q_i) \in\mathbb{Z}^2$ 
such that $m_i = p_i + q_i$. 
Let $\varepsilon_{p_i}=\varepsilon_{2i -1} \equiv p_i \pmod{2}$ 
and $\varepsilon_{q_i}=\varepsilon_{2i} \equiv q_i \pmod{2}$,
where $\varepsilon_{p_i}, \varepsilon_{q_i} \in \{ 0, 1\}$ for $1 \leq i \leq n$ and $\varepsilon_{i} \in \{ 0, 1\}$ for $1 \leq i \leq 2n$.
Since $K_{m_1} \sharp K_{m_2} \sharp \cdots \sharp K_{m_n}$ 
is a fibered knot of genus $2n$ and 
\[
K_{m_1} \sharp K_{m_2} \sharp \cdots \sharp K_{m_n} 
\sim K_{p_1, q_1} \sharp K_{p_2, q_2} \sharp \cdots \sharp K_{p_n,q_n}
\] 
whenever $m_i = p_i + q_i$ for $1 \leq i \leq n$, we get a monodromy map
\[
\phi_{p_1, q_1, \cdots, p_n, q_n} := 
\prod_{i=1}^n (t_{d_{2i-1}}^{q_i} \circ t_{c_{2i}}^{p_i} \circ t_{a_{2i}}^{-1} 
\circ t_{b_{2i}}^{-1} \circ t_{a_{2i-1}} \circ t_{b_{2i-1}}).
\]

\begin{figure}[htbp]
\begin{center}

\begin{tikzpicture}[line cap=round,line join=round,>=triangle 45,x=0.8cm,y=0.8cm]
\clip(0.8,-8.2) rectangle (16.6,1.27437289327);
\draw [shift={(1.4,0.2)}] plot[domain=0.:3.14159265359,variable=\t]({1.*0.2*cos(\t r)+0.*0.2*sin(\t r)},{0.*0.2*cos(\t r)+1.*0.2*sin(\t r)});
\draw [shift={(1.4,0.2)}] plot[domain=-3.14159265359:0.,variable=\t]({1.*0.2*cos(\t r)+0.*0.2*sin(\t r)},{0.*0.2*cos(\t r)+1.*0.2*sin(\t r)});
\draw [shift={(2.2,0.2)}] plot[domain=0.:pi,variable=\t]({1.*0.2*cos(\t r)+0.*0.2*sin(\t r)},{0.*0.2*cos(\t r)+1.*0.2*sin(\t r)});
\draw [shift={(2.2,0.2)}] plot[domain=-3.14159265359:0.,variable=\t]({1.*0.2*cos(\t r)+0.*0.2*sin(\t r)},{0.*0.2*cos(\t r)+1.*0.2*sin(\t r)});
\draw [shift={(3.,0.2)}] plot[domain=0.:3.14159265359,variable=\t]({1.*0.2*cos(\t r)+0.*0.2*sin(\t r)},{0.*0.2*cos(\t r)+1.*0.2*sin(\t r)});
\draw [shift={(3.,0.2)}] plot[domain=-3.14159265359:0.,variable=\t]({1.*0.2*cos(\t r)+0.*0.2*sin(\t r)},{0.*0.2*cos(\t r)+1.*0.2*sin(\t r)});
\draw [shift={(3.8,0.2)}] plot[domain=0.:3.14159265359,variable=\t]({1.*0.2*cos(\t r)+0.*0.2*sin(\t r)},{0.*0.2*cos(\t r)+1.*0.2*sin(\t r)});
\draw [shift={(3.8,0.2)}] plot[domain=-3.14159265359:0.,variable=\t]({1.*0.2*cos(\t r)+0.*0.2*sin(\t r)},{0.*0.2*cos(\t r)+1.*0.2*sin(\t r)});
\draw [shift={(4.6,0.2)}] plot[domain=0.:3.14159265359,variable=\t]({1.*0.2*cos(\t r)+0.*0.2*sin(\t r)},{0.*0.2*cos(\t r)+1.*0.2*sin(\t r)});
\draw [shift={(4.6,0.2)}] plot[domain=-3.14159265359:0.,variable=\t]({1.*0.2*cos(\t r)+0.*0.2*sin(\t r)},{0.*0.2*cos(\t r)+1.*0.2*sin(\t r)});
\draw [shift={(5.4,0.2)}] plot[domain=0.:3.14159265359,variable=\t]({1.*0.2*cos(\t r)+0.*0.2*sin(\t r)},{0.*0.2*cos(\t r)+1.*0.2*sin(\t r)});
\draw [shift={(5.4,0.2)}] plot[domain=-3.14159265359:0.,variable=\t]({1.*0.2*cos(\t r)+0.*0.2*sin(\t r)},{0.*0.2*cos(\t r)+1.*0.2*sin(\t r)});
\draw [shift={(6.2,0.2)}] plot[domain=0.:3.14159265359,variable=\t]({1.*0.2*cos(\t r)+0.*0.2*sin(\t r)},{0.*0.2*cos(\t r)+1.*0.2*sin(\t r)});
\draw [shift={(6.2,0.2)}] plot[domain=-3.14159265359:0.,variable=\t]({1.*0.2*cos(\t r)+0.*0.2*sin(\t r)},{0.*0.2*cos(\t r)+1.*0.2*sin(\t r)});
\draw [shift={(7.,0.2)}] plot[domain=0.:3.14159265359,variable=\t]({1.*0.2*cos(\t r)+0.*0.2*sin(\t r)},{0.*0.2*cos(\t r)+1.*0.2*sin(\t r)});
\draw [shift={(7.,0.2)}] plot[domain=-3.14159265359:0.,variable=\t]({1.*0.2*cos(\t r)+0.*0.2*sin(\t r)},{0.*0.2*cos(\t r)+1.*0.2*sin(\t r)});
\draw [shift={(7.8,0.2)}] plot[domain=0.:3.14159265359,variable=\t]({1.*0.2*cos(\t r)+0.*0.2*sin(\t r)},{0.*0.2*cos(\t r)+1.*0.2*sin(\t r)});
\draw [shift={(7.8,0.2)}] plot[domain=-3.14159265359:0.,variable=\t]({1.*0.2*cos(\t r)+0.*0.2*sin(\t r)},{0.*0.2*cos(\t r)+1.*0.2*sin(\t r)});
\draw [shift={(7.8,0.2)}] plot[domain=-1.57079632679:1.57079632679,variable=\t]({1.*0.6*cos(\t r)+0.*0.6*sin(\t r)},{0.*0.6*cos(\t r)+1.*0.6*sin(\t r)});
\draw [shift={(1.4,0.2)}] plot[domain=1.57079632679:4.71238898038,variable=\t]({1.*0.6*cos(\t r)+0.*0.6*sin(\t r)},{0.*0.6*cos(\t r)+1.*0.6*sin(\t r)});
\draw (1.4,0.8)-- (7.8,0.8);
\draw (1.4,-0.4)-- (7.8,-0.4);
\draw [shift={(1.4,-1.6)}] plot[domain=0.:pi,variable=\t]({1.*0.2*cos(\t r)+0.*0.2*sin(\t r)},{0.*0.2*cos(\t r)+1.*0.2*sin(\t r)});
\draw [shift={(1.4,-1.6)}] plot[domain=-3.14159265359:0.,variable=\t]({1.*0.2*cos(\t r)+0.*0.2*sin(\t r)},{0.*0.2*cos(\t r)+1.*0.2*sin(\t r)});
\draw [shift={(2.2,-1.6)}] plot[domain=0.:3.14159265359,variable=\t]({1.*0.2*cos(\t r)+0.*0.2*sin(\t r)},{0.*0.2*cos(\t r)+1.*0.2*sin(\t r)});
\draw [shift={(2.2,-1.6)}] plot[domain=-3.14159265359:0.,variable=\t]({1.*0.2*cos(\t r)+0.*0.2*sin(\t r)},{0.*0.2*cos(\t r)+1.*0.2*sin(\t r)});
\draw [shift={(3.,-1.6)}] plot[domain=0.:pi,variable=\t]({1.*0.2*cos(\t r)+0.*0.2*sin(\t r)},{0.*0.2*cos(\t r)+1.*0.2*sin(\t r)});
\draw [shift={(3.,-1.6)}] plot[domain=-3.14159265359:0.,variable=\t]({1.*0.2*cos(\t r)+0.*0.2*sin(\t r)},{0.*0.2*cos(\t r)+1.*0.2*sin(\t r)});
\draw [shift={(3.8,-1.6)}] plot[domain=0.:3.14159265359,variable=\t]({1.*0.2*cos(\t r)+0.*0.2*sin(\t r)},{0.*0.2*cos(\t r)+1.*0.2*sin(\t r)});
\draw [shift={(3.8,-1.6)}] plot[domain=-3.14159265359:0.,variable=\t]({1.*0.2*cos(\t r)+0.*0.2*sin(\t r)},{0.*0.2*cos(\t r)+1.*0.2*sin(\t r)});
\draw [shift={(4.6,-1.6)}] plot[domain=0.:3.14159265359,variable=\t]({1.*0.2*cos(\t r)+0.*0.2*sin(\t r)},{0.*0.2*cos(\t r)+1.*0.2*sin(\t r)});
\draw [shift={(4.6,-1.6)}] plot[domain=-3.14159265359:0.,variable=\t]({1.*0.2*cos(\t r)+0.*0.2*sin(\t r)},{0.*0.2*cos(\t r)+1.*0.2*sin(\t r)});
\draw [shift={(5.4,-1.6)}] plot[domain=0.:3.14159265359,variable=\t]({1.*0.2*cos(\t r)+0.*0.2*sin(\t r)},{0.*0.2*cos(\t r)+1.*0.2*sin(\t r)});
\draw [shift={(5.4,-1.6)}] plot[domain=-3.14159265359:0.,variable=\t]({1.*0.2*cos(\t r)+0.*0.2*sin(\t r)},{0.*0.2*cos(\t r)+1.*0.2*sin(\t r)});
\draw [shift={(6.2,-1.6)}] plot[domain=0.:3.14159265359,variable=\t]({1.*0.2*cos(\t r)+0.*0.2*sin(\t r)},{0.*0.2*cos(\t r)+1.*0.2*sin(\t r)});
\draw [shift={(6.2,-1.6)}] plot[domain=-3.14159265359:0.,variable=\t]({1.*0.2*cos(\t r)+0.*0.2*sin(\t r)},{0.*0.2*cos(\t r)+1.*0.2*sin(\t r)});
\draw [shift={(7.,-1.6)}] plot[domain=0.:3.14159265359,variable=\t]({1.*0.2*cos(\t r)+0.*0.2*sin(\t r)},{0.*0.2*cos(\t r)+1.*0.2*sin(\t r)});
\draw [shift={(7.,-1.6)}] plot[domain=-3.14159265359:0.,variable=\t]({1.*0.2*cos(\t r)+0.*0.2*sin(\t r)},{0.*0.2*cos(\t r)+1.*0.2*sin(\t r)});
\draw [shift={(7.8,-1.6)}] plot[domain=0.:3.14159265359,variable=\t]({1.*0.2*cos(\t r)+0.*0.2*sin(\t r)},{0.*0.2*cos(\t r)+1.*0.2*sin(\t r)});
\draw [shift={(7.8,-1.6)}] plot[domain=-3.14159265359:0.,variable=\t]({1.*0.2*cos(\t r)+0.*0.2*sin(\t r)},{0.*0.2*cos(\t r)+1.*0.2*sin(\t r)});
\draw [shift={(7.8,-1.6)}] plot[domain=-1.57079632679:1.57079632679,variable=\t]({1.*0.6*cos(\t r)+0.*0.6*sin(\t r)},{0.*0.6*cos(\t r)+1.*0.6*sin(\t r)});
\draw [shift={(1.4,-1.6)}] plot[domain=1.57079632679:4.71238898038,variable=\t]({1.*0.6*cos(\t r)+0.*0.6*sin(\t r)},{0.*0.6*cos(\t r)+1.*0.6*sin(\t r)});
\draw (1.4,-1.)-- (7.8,-1.);
\draw (1.4,-2.2)-- (7.8,-2.2);
\draw [shift={(1.4,-3.4)}] plot[domain=0.:3.14159265359,variable=\t]({1.*0.2*cos(\t r)+0.*0.2*sin(\t r)},{0.*0.2*cos(\t r)+1.*0.2*sin(\t r)});
\draw [shift={(1.4,-3.4)}] plot[domain=-3.14159265359:0.,variable=\t]({1.*0.2*cos(\t r)+0.*0.2*sin(\t r)},{0.*0.2*cos(\t r)+1.*0.2*sin(\t r)});
\draw [shift={(2.2,-3.4)}] plot[domain=0.:3.14159265359,variable=\t]({1.*0.2*cos(\t r)+0.*0.2*sin(\t r)},{0.*0.2*cos(\t r)+1.*0.2*sin(\t r)});
\draw [shift={(2.2,-3.4)}] plot[domain=-3.14159265359:0.,variable=\t]({1.*0.2*cos(\t r)+0.*0.2*sin(\t r)},{0.*0.2*cos(\t r)+1.*0.2*sin(\t r)});
\draw [shift={(3.,-3.4)}] plot[domain=0.:pi,variable=\t]({1.*0.2*cos(\t r)+0.*0.2*sin(\t r)},{0.*0.2*cos(\t r)+1.*0.2*sin(\t r)});
\draw [shift={(3.,-3.4)}] plot[domain=-3.14159265359:0.,variable=\t]({1.*0.2*cos(\t r)+0.*0.2*sin(\t r)},{0.*0.2*cos(\t r)+1.*0.2*sin(\t r)});
\draw [shift={(3.8,-3.4)}] plot[domain=0.:3.14159265359,variable=\t]({1.*0.2*cos(\t r)+0.*0.2*sin(\t r)},{0.*0.2*cos(\t r)+1.*0.2*sin(\t r)});
\draw [shift={(3.8,-3.4)}] plot[domain=-3.14159265359:0.,variable=\t]({1.*0.2*cos(\t r)+0.*0.2*sin(\t r)},{0.*0.2*cos(\t r)+1.*0.2*sin(\t r)});
\draw [shift={(4.6,-3.4)}] plot[domain=0.:3.14159265359,variable=\t]({1.*0.2*cos(\t r)+0.*0.2*sin(\t r)},{0.*0.2*cos(\t r)+1.*0.2*sin(\t r)});
\draw [shift={(4.6,-3.4)}] plot[domain=-3.14159265359:0.,variable=\t]({1.*0.2*cos(\t r)+0.*0.2*sin(\t r)},{0.*0.2*cos(\t r)+1.*0.2*sin(\t r)});
\draw [shift={(5.4,-3.4)}] plot[domain=0.:3.14159265359,variable=\t]({1.*0.2*cos(\t r)+0.*0.2*sin(\t r)},{0.*0.2*cos(\t r)+1.*0.2*sin(\t r)});
\draw [shift={(5.4,-3.4)}] plot[domain=-3.14159265359:0.,variable=\t]({1.*0.2*cos(\t r)+0.*0.2*sin(\t r)},{0.*0.2*cos(\t r)+1.*0.2*sin(\t r)});
\draw [shift={(6.2,-3.4)}] plot[domain=0.:3.14159265359,variable=\t]({1.*0.2*cos(\t r)+0.*0.2*sin(\t r)},{0.*0.2*cos(\t r)+1.*0.2*sin(\t r)});
\draw [shift={(6.2,-3.4)}] plot[domain=-3.14159265359:0.,variable=\t]({1.*0.2*cos(\t r)+0.*0.2*sin(\t r)},{0.*0.2*cos(\t r)+1.*0.2*sin(\t r)});
\draw [shift={(7.,-3.4)}] plot[domain=0.:3.14159265359,variable=\t]({1.*0.2*cos(\t r)+0.*0.2*sin(\t r)},{0.*0.2*cos(\t r)+1.*0.2*sin(\t r)});
\draw [shift={(7.,-3.4)}] plot[domain=-3.14159265359:0.,variable=\t]({1.*0.2*cos(\t r)+0.*0.2*sin(\t r)},{0.*0.2*cos(\t r)+1.*0.2*sin(\t r)});
\draw [shift={(7.8,-3.4)}] plot[domain=0.:3.14159265359,variable=\t]({1.*0.2*cos(\t r)+0.*0.2*sin(\t r)},{0.*0.2*cos(\t r)+1.*0.2*sin(\t r)});
\draw [shift={(7.8,-3.4)}] plot[domain=-3.14159265359:0.,variable=\t]({1.*0.2*cos(\t r)+0.*0.2*sin(\t r)},{0.*0.2*cos(\t r)+1.*0.2*sin(\t r)});
\draw [shift={(7.8,-3.4)}] plot[domain=-1.57079632679:1.57079632679,variable=\t]({1.*0.6*cos(\t r)+0.*0.6*sin(\t r)},{0.*0.6*cos(\t r)+1.*0.6*sin(\t r)});
\draw [shift={(1.4,-3.4)}] plot[domain=1.57079632679:4.71238898038,variable=\t]({1.*0.6*cos(\t r)+0.*0.6*sin(\t r)},{0.*0.6*cos(\t r)+1.*0.6*sin(\t r)});
\draw (1.4,-2.8)-- (7.8,-2.8);
\draw (1.4,-4.)-- (7.8,-4.);
\draw [shift={(1.4,-5.2)}] plot[domain=0.:3.14159265359,variable=\t]({1.*0.2*cos(\t r)+0.*0.2*sin(\t r)},{0.*0.2*cos(\t r)+1.*0.2*sin(\t r)});
\draw [shift={(1.4,-5.2)}] plot[domain=-3.14159265359:0.,variable=\t]({1.*0.2*cos(\t r)+0.*0.2*sin(\t r)},{0.*0.2*cos(\t r)+1.*0.2*sin(\t r)});
\draw [shift={(2.2,-5.2)}] plot[domain=0.:pi,variable=\t]({1.*0.2*cos(\t r)+0.*0.2*sin(\t r)},{0.*0.2*cos(\t r)+1.*0.2*sin(\t r)});
\draw [shift={(2.2,-5.2)}] plot[domain=-3.14159265359:0.,variable=\t]({1.*0.2*cos(\t r)+0.*0.2*sin(\t r)},{0.*0.2*cos(\t r)+1.*0.2*sin(\t r)});
\draw [shift={(3.,-5.2)}] plot[domain=0.:3.14159265359,variable=\t]({1.*0.2*cos(\t r)+0.*0.2*sin(\t r)},{0.*0.2*cos(\t r)+1.*0.2*sin(\t r)});
\draw [shift={(3.,-5.2)}] plot[domain=-3.14159265359:0.,variable=\t]({1.*0.2*cos(\t r)+0.*0.2*sin(\t r)},{0.*0.2*cos(\t r)+1.*0.2*sin(\t r)});
\draw [shift={(3.8,-5.2)}] plot[domain=0.:pi,variable=\t]({1.*0.2*cos(\t r)+0.*0.2*sin(\t r)},{0.*0.2*cos(\t r)+1.*0.2*sin(\t r)});
\draw [shift={(3.8,-5.2)}] plot[domain=-3.14159265359:0.,variable=\t]({1.*0.2*cos(\t r)+0.*0.2*sin(\t r)},{0.*0.2*cos(\t r)+1.*0.2*sin(\t r)});
\draw [shift={(4.6,-5.2)}] plot[domain=0.:pi,variable=\t]({1.*0.2*cos(\t r)+0.*0.2*sin(\t r)},{0.*0.2*cos(\t r)+1.*0.2*sin(\t r)});
\draw [shift={(4.6,-5.2)}] plot[domain=-3.14159265359:0.,variable=\t]({1.*0.2*cos(\t r)+0.*0.2*sin(\t r)},{0.*0.2*cos(\t r)+1.*0.2*sin(\t r)});
\draw [shift={(5.4,-5.2)}] plot[domain=0.:3.14159265359,variable=\t]({1.*0.2*cos(\t r)+0.*0.2*sin(\t r)},{0.*0.2*cos(\t r)+1.*0.2*sin(\t r)});
\draw [shift={(5.4,-5.2)}] plot[domain=-3.14159265359:0.,variable=\t]({1.*0.2*cos(\t r)+0.*0.2*sin(\t r)},{0.*0.2*cos(\t r)+1.*0.2*sin(\t r)});
\draw [shift={(6.2,-5.2)}] plot[domain=0.:3.14159265359,variable=\t]({1.*0.2*cos(\t r)+0.*0.2*sin(\t r)},{0.*0.2*cos(\t r)+1.*0.2*sin(\t r)});
\draw [shift={(6.2,-5.2)}] plot[domain=-3.14159265359:0.,variable=\t]({1.*0.2*cos(\t r)+0.*0.2*sin(\t r)},{0.*0.2*cos(\t r)+1.*0.2*sin(\t r)});
\draw [shift={(7.,-5.2)}] plot[domain=0.:pi,variable=\t]({1.*0.2*cos(\t r)+0.*0.2*sin(\t r)},{0.*0.2*cos(\t r)+1.*0.2*sin(\t r)});
\draw [shift={(7.,-5.2)}] plot[domain=-3.14159265359:0.,variable=\t]({1.*0.2*cos(\t r)+0.*0.2*sin(\t r)},{0.*0.2*cos(\t r)+1.*0.2*sin(\t r)});
\draw [shift={(7.8,-5.2)}] plot[domain=0.:pi,variable=\t]({1.*0.2*cos(\t r)+0.*0.2*sin(\t r)},{0.*0.2*cos(\t r)+1.*0.2*sin(\t r)});
\draw [shift={(7.8,-5.2)}] plot[domain=-3.14159265359:0.,variable=\t]({1.*0.2*cos(\t r)+0.*0.2*sin(\t r)},{0.*0.2*cos(\t r)+1.*0.2*sin(\t r)});
\draw [shift={(7.8,-5.2)}] plot[domain=-1.57079632679:1.57079632679,variable=\t]({1.*0.6*cos(\t r)+0.*0.6*sin(\t r)},{0.*0.6*cos(\t r)+1.*0.6*sin(\t r)});
\draw [shift={(1.4,-5.2)}] plot[domain=1.57079632679:4.71238898038,variable=\t]({1.*0.6*cos(\t r)+0.*0.6*sin(\t r)},{0.*0.6*cos(\t r)+1.*0.6*sin(\t r)});
\draw (1.4,-4.6)-- (7.8,-4.6);
\draw (1.4,-5.8)-- (7.8,-5.8);
\draw [shift={(9.6,0.2)}] plot[domain=0.:3.14159265359,variable=\t]({1.*0.2*cos(\t r)+0.*0.2*sin(\t r)},{0.*0.2*cos(\t r)+1.*0.2*sin(\t r)});
\draw [shift={(9.6,0.2)}] plot[domain=-3.14159265359:0.,variable=\t]({1.*0.2*cos(\t r)+0.*0.2*sin(\t r)},{0.*0.2*cos(\t r)+1.*0.2*sin(\t r)});
\draw [shift={(10.4,0.2)}] plot[domain=0.:pi,variable=\t]({1.*0.2*cos(\t r)+0.*0.2*sin(\t r)},{0.*0.2*cos(\t r)+1.*0.2*sin(\t r)});
\draw [shift={(10.4,0.2)}] plot[domain=-3.14159265359:0.,variable=\t]({1.*0.2*cos(\t r)+0.*0.2*sin(\t r)},{0.*0.2*cos(\t r)+1.*0.2*sin(\t r)});
\draw [shift={(11.2,0.2)}] plot[domain=0.:pi,variable=\t]({1.*0.2*cos(\t r)+0.*0.2*sin(\t r)},{0.*0.2*cos(\t r)+1.*0.2*sin(\t r)});
\draw [shift={(11.2,0.2)}] plot[domain=-3.14159265359:0.,variable=\t]({1.*0.2*cos(\t r)+0.*0.2*sin(\t r)},{0.*0.2*cos(\t r)+1.*0.2*sin(\t r)});
\draw [shift={(12.,0.2)}] plot[domain=0.:3.14159265359,variable=\t]({1.*0.2*cos(\t r)+0.*0.2*sin(\t r)},{0.*0.2*cos(\t r)+1.*0.2*sin(\t r)});
\draw [shift={(12.,0.2)}] plot[domain=-3.14159265359:0.,variable=\t]({1.*0.2*cos(\t r)+0.*0.2*sin(\t r)},{0.*0.2*cos(\t r)+1.*0.2*sin(\t r)});
\draw [shift={(12.8,0.2)}] plot[domain=0.:pi,variable=\t]({1.*0.2*cos(\t r)+0.*0.2*sin(\t r)},{0.*0.2*cos(\t r)+1.*0.2*sin(\t r)});
\draw [shift={(12.8,0.2)}] plot[domain=-3.14159265359:0.,variable=\t]({1.*0.2*cos(\t r)+0.*0.2*sin(\t r)},{0.*0.2*cos(\t r)+1.*0.2*sin(\t r)});
\draw [shift={(13.6,0.2)}] plot[domain=0.:3.14159265359,variable=\t]({1.*0.2*cos(\t r)+0.*0.2*sin(\t r)},{0.*0.2*cos(\t r)+1.*0.2*sin(\t r)});
\draw [shift={(13.6,0.2)}] plot[domain=-3.14159265359:0.,variable=\t]({1.*0.2*cos(\t r)+0.*0.2*sin(\t r)},{0.*0.2*cos(\t r)+1.*0.2*sin(\t r)});
\draw [shift={(14.4,0.2)}] plot[domain=0.:3.14159265359,variable=\t]({1.*0.2*cos(\t r)+0.*0.2*sin(\t r)},{0.*0.2*cos(\t r)+1.*0.2*sin(\t r)});
\draw [shift={(14.4,0.2)}] plot[domain=-3.14159265359:0.,variable=\t]({1.*0.2*cos(\t r)+0.*0.2*sin(\t r)},{0.*0.2*cos(\t r)+1.*0.2*sin(\t r)});
\draw [shift={(15.2,0.2)}] plot[domain=0.:pi,variable=\t]({1.*0.2*cos(\t r)+0.*0.2*sin(\t r)},{0.*0.2*cos(\t r)+1.*0.2*sin(\t r)});
\draw [shift={(15.2,0.2)}] plot[domain=-3.14159265359:0.,variable=\t]({1.*0.2*cos(\t r)+0.*0.2*sin(\t r)},{0.*0.2*cos(\t r)+1.*0.2*sin(\t r)});
\draw [shift={(16.,0.2)}] plot[domain=0.:3.14159265359,variable=\t]({1.*0.2*cos(\t r)+0.*0.2*sin(\t r)},{0.*0.2*cos(\t r)+1.*0.2*sin(\t r)});
\draw [shift={(16.,0.2)}] plot[domain=-3.14159265359:0.,variable=\t]({1.*0.2*cos(\t r)+0.*0.2*sin(\t r)},{0.*0.2*cos(\t r)+1.*0.2*sin(\t r)});
\draw [shift={(16.,0.2)}] plot[domain=-1.57079632679:1.57079632679,variable=\t]({1.*0.6*cos(\t r)+0.*0.6*sin(\t r)},{0.*0.6*cos(\t r)+1.*0.6*sin(\t r)});
\draw [shift={(9.6,0.2)}] plot[domain=1.57079632679:4.71238898038,variable=\t]({1.*0.6*cos(\t r)+0.*0.6*sin(\t r)},{0.*0.6*cos(\t r)+1.*0.6*sin(\t r)});
\draw (9.6,0.8)-- (16.,0.8);
\draw (9.6,-0.4)-- (16.,-0.4);
\draw [shift={(9.6,-1.6)}] plot[domain=0.:3.14159265359,variable=\t]({1.*0.2*cos(\t r)+0.*0.2*sin(\t r)},{0.*0.2*cos(\t r)+1.*0.2*sin(\t r)});
\draw [shift={(9.6,-1.6)}] plot[domain=-3.14159265359:0.,variable=\t]({1.*0.2*cos(\t r)+0.*0.2*sin(\t r)},{0.*0.2*cos(\t r)+1.*0.2*sin(\t r)});
\draw [shift={(10.4,-1.6)}] plot[domain=0.:3.14159265359,variable=\t]({1.*0.2*cos(\t r)+0.*0.2*sin(\t r)},{0.*0.2*cos(\t r)+1.*0.2*sin(\t r)});
\draw [shift={(10.4,-1.6)}] plot[domain=-3.14159265359:0.,variable=\t]({1.*0.2*cos(\t r)+0.*0.2*sin(\t r)},{0.*0.2*cos(\t r)+1.*0.2*sin(\t r)});
\draw [shift={(11.2,-1.6)}] plot[domain=0.:3.14159265359,variable=\t]({1.*0.2*cos(\t r)+0.*0.2*sin(\t r)},{0.*0.2*cos(\t r)+1.*0.2*sin(\t r)});
\draw [shift={(11.2,-1.6)}] plot[domain=-3.14159265359:0.,variable=\t]({1.*0.2*cos(\t r)+0.*0.2*sin(\t r)},{0.*0.2*cos(\t r)+1.*0.2*sin(\t r)});
\draw [shift={(12.,-1.6)}] plot[domain=0.:3.14159265359,variable=\t]({1.*0.2*cos(\t r)+0.*0.2*sin(\t r)},{0.*0.2*cos(\t r)+1.*0.2*sin(\t r)});
\draw [shift={(12.,-1.6)}] plot[domain=-3.14159265359:0.,variable=\t]({1.*0.2*cos(\t r)+0.*0.2*sin(\t r)},{0.*0.2*cos(\t r)+1.*0.2*sin(\t r)});
\draw [shift={(12.8,-1.6)}] plot[domain=0.:3.14159265359,variable=\t]({1.*0.2*cos(\t r)+0.*0.2*sin(\t r)},{0.*0.2*cos(\t r)+1.*0.2*sin(\t r)});
\draw [shift={(12.8,-1.6)}] plot[domain=-3.14159265359:0.,variable=\t]({1.*0.2*cos(\t r)+0.*0.2*sin(\t r)},{0.*0.2*cos(\t r)+1.*0.2*sin(\t r)});
\draw [shift={(13.6,-1.6)}] plot[domain=0.:3.14159265359,variable=\t]({1.*0.2*cos(\t r)+0.*0.2*sin(\t r)},{0.*0.2*cos(\t r)+1.*0.2*sin(\t r)});
\draw [shift={(13.6,-1.6)}] plot[domain=-3.14159265359:0.,variable=\t]({1.*0.2*cos(\t r)+0.*0.2*sin(\t r)},{0.*0.2*cos(\t r)+1.*0.2*sin(\t r)});
\draw [shift={(14.4,-1.6)}] plot[domain=0.:3.14159265359,variable=\t]({1.*0.2*cos(\t r)+0.*0.2*sin(\t r)},{0.*0.2*cos(\t r)+1.*0.2*sin(\t r)});
\draw [shift={(14.4,-1.6)}] plot[domain=-3.14159265359:0.,variable=\t]({1.*0.2*cos(\t r)+0.*0.2*sin(\t r)},{0.*0.2*cos(\t r)+1.*0.2*sin(\t r)});
\draw [shift={(15.2,-1.6)}] plot[domain=0.:3.14159265359,variable=\t]({1.*0.2*cos(\t r)+0.*0.2*sin(\t r)},{0.*0.2*cos(\t r)+1.*0.2*sin(\t r)});
\draw [shift={(15.2,-1.6)}] plot[domain=-3.14159265359:0.,variable=\t]({1.*0.2*cos(\t r)+0.*0.2*sin(\t r)},{0.*0.2*cos(\t r)+1.*0.2*sin(\t r)});
\draw [shift={(16.,-1.6)}] plot[domain=0.:pi,variable=\t]({1.*0.2*cos(\t r)+0.*0.2*sin(\t r)},{0.*0.2*cos(\t r)+1.*0.2*sin(\t r)});
\draw [shift={(16.,-1.6)}] plot[domain=-3.14159265359:0.,variable=\t]({1.*0.2*cos(\t r)+0.*0.2*sin(\t r)},{0.*0.2*cos(\t r)+1.*0.2*sin(\t r)});
\draw [shift={(16.,-1.6)}] plot[domain=-1.57079632679:1.57079632679,variable=\t]({1.*0.6*cos(\t r)+0.*0.6*sin(\t r)},{0.*0.6*cos(\t r)+1.*0.6*sin(\t r)});
\draw [shift={(9.6,-1.6)}] plot[domain=1.57079632679:4.71238898038,variable=\t]({1.*0.6*cos(\t r)+0.*0.6*sin(\t r)},{0.*0.6*cos(\t r)+1.*0.6*sin(\t r)});
\draw (9.6,-1.)-- (16.,-1.);
\draw (9.6,-2.2)-- (16.,-2.2);
\draw [shift={(9.6,-3.4)}] plot[domain=0.:pi,variable=\t]({1.*0.2*cos(\t r)+0.*0.2*sin(\t r)},{0.*0.2*cos(\t r)+1.*0.2*sin(\t r)});
\draw [shift={(9.6,-3.4)}] plot[domain=-3.14159265359:0.,variable=\t]({1.*0.2*cos(\t r)+0.*0.2*sin(\t r)},{0.*0.2*cos(\t r)+1.*0.2*sin(\t r)});
\draw [shift={(10.4,-3.4)}] plot[domain=0.:pi,variable=\t]({1.*0.2*cos(\t r)+0.*0.2*sin(\t r)},{0.*0.2*cos(\t r)+1.*0.2*sin(\t r)});
\draw [shift={(10.4,-3.4)}] plot[domain=-3.14159265359:0.,variable=\t]({1.*0.2*cos(\t r)+0.*0.2*sin(\t r)},{0.*0.2*cos(\t r)+1.*0.2*sin(\t r)});
\draw [shift={(11.2,-3.4)}] plot[domain=0.:3.14159265359,variable=\t]({1.*0.2*cos(\t r)+0.*0.2*sin(\t r)},{0.*0.2*cos(\t r)+1.*0.2*sin(\t r)});
\draw [shift={(11.2,-3.4)}] plot[domain=-3.14159265359:0.,variable=\t]({1.*0.2*cos(\t r)+0.*0.2*sin(\t r)},{0.*0.2*cos(\t r)+1.*0.2*sin(\t r)});
\draw [shift={(12.,-3.4)}] plot[domain=0.:3.14159265359,variable=\t]({1.*0.2*cos(\t r)+0.*0.2*sin(\t r)},{0.*0.2*cos(\t r)+1.*0.2*sin(\t r)});
\draw [shift={(12.,-3.4)}] plot[domain=-3.14159265359:0.,variable=\t]({1.*0.2*cos(\t r)+0.*0.2*sin(\t r)},{0.*0.2*cos(\t r)+1.*0.2*sin(\t r)});
\draw [shift={(12.8,-3.4)}] plot[domain=0.:3.14159265359,variable=\t]({1.*0.2*cos(\t r)+0.*0.2*sin(\t r)},{0.*0.2*cos(\t r)+1.*0.2*sin(\t r)});
\draw [shift={(12.8,-3.4)}] plot[domain=-3.14159265359:0.,variable=\t]({1.*0.2*cos(\t r)+0.*0.2*sin(\t r)},{0.*0.2*cos(\t r)+1.*0.2*sin(\t r)});
\draw [shift={(13.6,-3.4)}] plot[domain=0.:pi,variable=\t]({1.*0.2*cos(\t r)+0.*0.2*sin(\t r)},{0.*0.2*cos(\t r)+1.*0.2*sin(\t r)});
\draw [shift={(13.6,-3.4)}] plot[domain=-3.14159265359:0.,variable=\t]({1.*0.2*cos(\t r)+0.*0.2*sin(\t r)},{0.*0.2*cos(\t r)+1.*0.2*sin(\t r)});
\draw [shift={(14.4,-3.4)}] plot[domain=0.:3.14159265359,variable=\t]({1.*0.2*cos(\t r)+0.*0.2*sin(\t r)},{0.*0.2*cos(\t r)+1.*0.2*sin(\t r)});
\draw [shift={(14.4,-3.4)}] plot[domain=-3.14159265359:0.,variable=\t]({1.*0.2*cos(\t r)+0.*0.2*sin(\t r)},{0.*0.2*cos(\t r)+1.*0.2*sin(\t r)});
\draw [shift={(15.2,-3.4)}] plot[domain=0.:3.14159265359,variable=\t]({1.*0.2*cos(\t r)+0.*0.2*sin(\t r)},{0.*0.2*cos(\t r)+1.*0.2*sin(\t r)});
\draw [shift={(15.2,-3.4)}] plot[domain=-3.14159265359:0.,variable=\t]({1.*0.2*cos(\t r)+0.*0.2*sin(\t r)},{0.*0.2*cos(\t r)+1.*0.2*sin(\t r)});
\draw [shift={(16.,-3.4)}] plot[domain=0.:3.14159265359,variable=\t]({1.*0.2*cos(\t r)+0.*0.2*sin(\t r)},{0.*0.2*cos(\t r)+1.*0.2*sin(\t r)});
\draw [shift={(16.,-3.4)}] plot[domain=-3.14159265359:0.,variable=\t]({1.*0.2*cos(\t r)+0.*0.2*sin(\t r)},{0.*0.2*cos(\t r)+1.*0.2*sin(\t r)});
\draw [shift={(16.,-3.4)}] plot[domain=-1.57079632679:1.57079632679,variable=\t]({1.*0.6*cos(\t r)+0.*0.6*sin(\t r)},{0.*0.6*cos(\t r)+1.*0.6*sin(\t r)});
\draw [shift={(9.6,-3.4)}] plot[domain=1.57079632679:4.71238898038,variable=\t]({1.*0.6*cos(\t r)+0.*0.6*sin(\t r)},{0.*0.6*cos(\t r)+1.*0.6*sin(\t r)});
\draw (9.6,-2.8)-- (16.,-2.8);
\draw (9.6,-4.)-- (16.,-4.);
\draw [shift={(9.6,-5.2)}] plot[domain=0.:3.14159265359,variable=\t]({1.*0.2*cos(\t r)+0.*0.2*sin(\t r)},{0.*0.2*cos(\t r)+1.*0.2*sin(\t r)});
\draw [shift={(9.6,-5.2)}] plot[domain=-3.14159265359:0.,variable=\t]({1.*0.2*cos(\t r)+0.*0.2*sin(\t r)},{0.*0.2*cos(\t r)+1.*0.2*sin(\t r)});
\draw [shift={(10.4,-5.2)}] plot[domain=0.:3.14159265359,variable=\t]({1.*0.2*cos(\t r)+0.*0.2*sin(\t r)},{0.*0.2*cos(\t r)+1.*0.2*sin(\t r)});
\draw [shift={(10.4,-5.2)}] plot[domain=-3.14159265359:0.,variable=\t]({1.*0.2*cos(\t r)+0.*0.2*sin(\t r)},{0.*0.2*cos(\t r)+1.*0.2*sin(\t r)});
\draw [shift={(11.2,-5.2)}] plot[domain=0.:3.14159265359,variable=\t]({1.*0.2*cos(\t r)+0.*0.2*sin(\t r)},{0.*0.2*cos(\t r)+1.*0.2*sin(\t r)});
\draw [shift={(11.2,-5.2)}] plot[domain=-3.14159265359:0.,variable=\t]({1.*0.2*cos(\t r)+0.*0.2*sin(\t r)},{0.*0.2*cos(\t r)+1.*0.2*sin(\t r)});
\draw [shift={(12.,-5.2)}] plot[domain=0.:pi,variable=\t]({1.*0.2*cos(\t r)+0.*0.2*sin(\t r)},{0.*0.2*cos(\t r)+1.*0.2*sin(\t r)});
\draw [shift={(12.,-5.2)}] plot[domain=-3.14159265359:0.,variable=\t]({1.*0.2*cos(\t r)+0.*0.2*sin(\t r)},{0.*0.2*cos(\t r)+1.*0.2*sin(\t r)});
\draw [shift={(12.8,-5.2)}] plot[domain=0.:3.14159265359,variable=\t]({1.*0.2*cos(\t r)+0.*0.2*sin(\t r)},{0.*0.2*cos(\t r)+1.*0.2*sin(\t r)});
\draw [shift={(12.8,-5.2)}] plot[domain=-3.14159265359:0.,variable=\t]({1.*0.2*cos(\t r)+0.*0.2*sin(\t r)},{0.*0.2*cos(\t r)+1.*0.2*sin(\t r)});
\draw [shift={(13.6,-5.2)}] plot[domain=0.:3.14159265359,variable=\t]({1.*0.2*cos(\t r)+0.*0.2*sin(\t r)},{0.*0.2*cos(\t r)+1.*0.2*sin(\t r)});
\draw [shift={(13.6,-5.2)}] plot[domain=-3.14159265359:0.,variable=\t]({1.*0.2*cos(\t r)+0.*0.2*sin(\t r)},{0.*0.2*cos(\t r)+1.*0.2*sin(\t r)});
\draw [shift={(14.4,-5.2)}] plot[domain=0.:3.14159265359,variable=\t]({1.*0.2*cos(\t r)+0.*0.2*sin(\t r)},{0.*0.2*cos(\t r)+1.*0.2*sin(\t r)});
\draw [shift={(14.4,-5.2)}] plot[domain=-3.14159265359:0.,variable=\t]({1.*0.2*cos(\t r)+0.*0.2*sin(\t r)},{0.*0.2*cos(\t r)+1.*0.2*sin(\t r)});
\draw [shift={(15.2,-5.2)}] plot[domain=0.:pi,variable=\t]({1.*0.2*cos(\t r)+0.*0.2*sin(\t r)},{0.*0.2*cos(\t r)+1.*0.2*sin(\t r)});
\draw [shift={(15.2,-5.2)}] plot[domain=-3.14159265359:0.,variable=\t]({1.*0.2*cos(\t r)+0.*0.2*sin(\t r)},{0.*0.2*cos(\t r)+1.*0.2*sin(\t r)});
\draw [shift={(16.,-5.2)}] plot[domain=0.:3.14159265359,variable=\t]({1.*0.2*cos(\t r)+0.*0.2*sin(\t r)},{0.*0.2*cos(\t r)+1.*0.2*sin(\t r)});
\draw [shift={(16.,-5.2)}] plot[domain=-3.14159265359:0.,variable=\t]({1.*0.2*cos(\t r)+0.*0.2*sin(\t r)},{0.*0.2*cos(\t r)+1.*0.2*sin(\t r)});
\draw [shift={(16.,-5.2)}] plot[domain=-1.57079632679:1.57079632679,variable=\t]({1.*0.6*cos(\t r)+0.*0.6*sin(\t r)},{0.*0.6*cos(\t r)+1.*0.6*sin(\t r)});
\draw [shift={(9.6,-5.2)}] plot[domain=1.57079632679:4.71238898038,variable=\t]({1.*0.6*cos(\t r)+0.*0.6*sin(\t r)},{0.*0.6*cos(\t r)+1.*0.6*sin(\t r)});
\draw (9.6,-4.6)-- (16.,-4.6);
\draw (9.6,-5.8)-- (16.,-5.8);
\draw [shift={(1.8,0.3)},dash pattern=on 1pt off 1pt]  plot[domain=3.78509376238:5.63968419839,variable=\t]({1.*0.5*cos(\t r)+0.*0.5*sin(\t r)},{0.*0.5*cos(\t r)+1.*0.5*sin(\t r)});
\draw [shift={(2.6,0.3)}] plot[domain=3.78509376238:5.63968419839,variable=\t]({1.*0.5*cos(\t r)+0.*0.5*sin(\t r)},{0.*0.5*cos(\t r)+1.*0.5*sin(\t r)});
\draw [shift={(3.4,0.3)},dash pattern=on 1pt off 1pt]  plot[domain=3.78509376238:5.63968419839,variable=\t]({1.*0.5*cos(\t r)+0.*0.5*sin(\t r)},{0.*0.5*cos(\t r)+1.*0.5*sin(\t r)});
\draw [shift={(4.00963681174,0.0253679664386)}] plot[domain=-3.02116905014:0.143161706395,variable=\t]({1.*0.211166111295*cos(\t r)+0.*0.211166111295*sin(\t r)},{0.*0.211166111295*cos(\t r)+1.*0.211166111295*sin(\t r)});
\draw [shift={(4.60464555162,0.190947601665)}] plot[domain=1.58215270051:3.47907705673,variable=\t]({1.*0.409078776929*cos(\t r)+0.*0.409078776929*sin(\t r)},{0.*0.409078776929*cos(\t r)+1.*0.409078776929*sin(\t r)});
\draw [shift={(1.46485781721,-0.15)}] plot[domain=1.97890584735:4.30427945983,variable=\t]({1.*0.163421346382*cos(\t r)+0.*0.163421346382*sin(\t r)},{0.*0.163421346382*cos(\t r)+1.*0.163421346382*sin(\t r)});
\draw (1.4,-0.3)-- (7.8,-0.3);
\draw [shift={(7.75,0.15)}] plot[domain=-1.46013910562:1.46013910562,variable=\t]({1.*0.452769256907*cos(\t r)+0.*0.452769256907*sin(\t r)},{0.*0.452769256907*cos(\t r)+1.*0.452769256907*sin(\t r)});
\draw (4.6,0.6)-- (7.8,0.6);
\draw [shift={(1.15575613652,-1.66726840957)},dash pattern=on 1pt off 1pt]  plot[domain=0.830380494905:2.95471326748,variable=\t]({1.*0.362060033146*cos(\t r)+0.*0.362060033146*sin(\t r)},{0.*0.362060033146*cos(\t r)+1.*0.362060033146*sin(\t r)});
\draw [shift={(1.13310939327,-1.60067182019)}] plot[domain=3.13957584082:5.64170101116,variable=\t]({1.*0.333110070738*cos(\t r)+0.*0.333110070738*sin(\t r)},{0.*0.333110070738*cos(\t r)+1.*0.333110070738*sin(\t r)});
\draw [shift={(1.54088270844,-1.5457323914)}] plot[domain=-0.253548906183:2.33927556227,variable=\t]({1.*0.202696490943*cos(\t r)+0.*0.202696490943*sin(\t r)},{0.*0.202696490943*cos(\t r)+1.*0.202696490943*sin(\t r)});
\draw [shift={(2.12252702077,-1.63277716314)}] plot[domain=3.04794541694:4.3903458049,variable=\t]({1.*0.387124634624*cos(\t r)+0.*0.387124634624*sin(\t r)},{0.*0.387124634624*cos(\t r)+1.*0.387124634624*sin(\t r)});
\draw [shift={(1.8,-1.5)},dash pattern=on 1pt off 1pt]  plot[domain=3.78509376238:5.63968419839,variable=\t]({1.*0.5*cos(\t r)+0.*0.5*sin(\t r)},{0.*0.5*cos(\t r)+1.*0.5*sin(\t r)});
\draw [shift={(2.6,-1.5)}] plot[domain=3.78509376238:5.63968419839,variable=\t]({1.*0.5*cos(\t r)+0.*0.5*sin(\t r)},{0.*0.5*cos(\t r)+1.*0.5*sin(\t r)});
\draw [shift={(3.4,-1.5)},dash pattern=on 1pt off 1pt]  plot[domain=3.78509376238:5.63968419839,variable=\t]({1.*0.5*cos(\t r)+0.*0.5*sin(\t r)},{0.*0.5*cos(\t r)+1.*0.5*sin(\t r)});
\draw [shift={(9.86170715716,134.704386086)}] plot[domain=4.65494343949:4.69294934124,variable=\t]({1.*136.930258214*cos(\t r)+0.*136.930258214*sin(\t r)},{0.*136.930258214*cos(\t r)+1.*136.930258214*sin(\t r)});
\draw [shift={(3.94652346566,-1.8096036616)}] plot[domain=-3.20704254322:0.145296834103,variable=\t]({1.*0.146837857198*cos(\t r)+0.*0.146837857198*sin(\t r)},{0.*0.146837857198*cos(\t r)+1.*0.146837857198*sin(\t r)});
\draw [shift={(4.70184243243,-1.80161348912)}] plot[domain=1.7384887266:3.11984313642,variable=\t]({1.*0.610172656981*cos(\t r)+0.*0.610172656981*sin(\t r)},{0.*0.610172656981*cos(\t r)+1.*0.610172656981*sin(\t r)});
\draw (4.6,-1.2)-- (7.8,-1.2);
\draw [shift={(7.83328010111,-1.5)}] plot[domain=-1.68127826773:1.68127826773,variable=\t]({1.*0.301840297393*cos(\t r)+0.*0.301840297393*sin(\t r)},{0.*0.301840297393*cos(\t r)+1.*0.301840297393*sin(\t r)});
\draw [shift={(7.07511397046,-1.36267095569)},dash pattern=on 1pt off 1pt]  plot[domain=4.86044566243:5.74033750543,variable=\t]({1.*0.846591075329*cos(\t r)+0.*0.846591075329*sin(\t r)},{0.*0.846591075329*cos(\t r)+1.*0.846591075329*sin(\t r)});
\draw [shift={(1.23504070966,-3.42991858068)}] plot[domain=3.01498248749:5.48250025528,variable=\t]({1.*0.236937242042*cos(\t r)+0.*0.236937242042*sin(\t r)},{0.*0.236937242042*cos(\t r)+1.*0.236937242042*sin(\t r)});
\draw [shift={(1.42899394244,-3.42899394244)}] plot[domain=1.63827960215:3.07410937823,variable=\t]({1.*0.42997261698*cos(\t r)+0.*0.42997261698*sin(\t r)},{0.*0.42997261698*cos(\t r)+1.*0.42997261698*sin(\t r)});
\draw [shift={(1.35566223854,-3.44433776146)}] plot[domain=0.0994547024676:1.47134162433,variable=\t]({1.*0.44654438005*cos(\t r)+0.*0.44654438005*sin(\t r)},{0.*0.44654438005*cos(\t r)+1.*0.44654438005*sin(\t r)});
\draw [shift={(2.29120683405,-3.41431422071)}] plot[domain=3.11245997477:4.52463516328,variable=\t]({1.*0.491415354592*cos(\t r)+0.*0.491415354592*sin(\t r)},{0.*0.491415354592*cos(\t r)+1.*0.491415354592*sin(\t r)});
\draw [shift={(1.8,-3.3)},dash pattern=on 1pt off 1pt]  plot[domain=3.78509376238:5.63968419839,variable=\t]({1.*0.5*cos(\t r)+0.*0.5*sin(\t r)},{0.*0.5*cos(\t r)+1.*0.5*sin(\t r)});
\draw [shift={(2.6,-3.3)}] plot[domain=3.78509376238:5.63968419839,variable=\t]({1.*0.5*cos(\t r)+0.*0.5*sin(\t r)},{0.*0.5*cos(\t r)+1.*0.5*sin(\t r)});
\draw [shift={(3.4,-3.3)},dash pattern=on 1pt off 1pt]  plot[domain=3.78509376238:5.63968419839,variable=\t]({1.*0.5*cos(\t r)+0.*0.5*sin(\t r)},{0.*0.5*cos(\t r)+1.*0.5*sin(\t r)});
\draw [shift={(3.95005130793,-3.51739045595)}] plot[domain=3.64485187617:6.04182561719,variable=\t]({1.*0.171288446132*cos(\t r)+0.*0.171288446132*sin(\t r)},{0.*0.171288446132*cos(\t r)+1.*0.171288446132*sin(\t r)});
\draw [shift={(4.64289508628,-3.52577884202)}] plot[domain=1.65219993644:3.2033417008,variable=\t]({1.*0.527525714203*cos(\t r)+0.*0.527525714203*sin(\t r)},{0.*0.527525714203*cos(\t r)+1.*0.527525714203*sin(\t r)});
\draw (2.1994828513,-3.89709342668)-- (7.2,-4.);
\draw (4.6,-3.)-- (7.2,-3.);
\draw [shift={(7.03835669957,-3.56164330043)}] plot[domain=0.280230774874:1.29056555192,variable=\t]({1.*0.584441402954*cos(\t r)+0.*0.584441402954*sin(\t r)},{0.*0.584441402954*cos(\t r)+1.*0.584441402954*sin(\t r)});
\draw [shift={(7.13250577281,-3.52167051521)},dash pattern=on 1pt off 1pt]  plot[domain=-1.43061772399:0.254612516893,variable=\t]({1.*0.483067869686*cos(\t r)+0.*0.483067869686*sin(\t r)},{0.*0.483067869686*cos(\t r)+1.*0.483067869686*sin(\t r)});
\draw [shift={(1.4,-7.)}] plot[domain=0.:pi,variable=\t]({1.*0.2*cos(\t r)+0.*0.2*sin(\t r)},{0.*0.2*cos(\t r)+1.*0.2*sin(\t r)});
\draw [shift={(1.4,-7.)}] plot[domain=-3.14159265359:0.,variable=\t]({1.*0.2*cos(\t r)+0.*0.2*sin(\t r)},{0.*0.2*cos(\t r)+1.*0.2*sin(\t r)});
\draw [shift={(2.2,-7.)}] plot[domain=0.:3.14159265359,variable=\t]({1.*0.2*cos(\t r)+0.*0.2*sin(\t r)},{0.*0.2*cos(\t r)+1.*0.2*sin(\t r)});
\draw [shift={(2.2,-7.)}] plot[domain=-3.14159265359:0.,variable=\t]({1.*0.2*cos(\t r)+0.*0.2*sin(\t r)},{0.*0.2*cos(\t r)+1.*0.2*sin(\t r)});
\draw [shift={(3.,-7.)}] plot[domain=0.:3.14159265359,variable=\t]({1.*0.2*cos(\t r)+0.*0.2*sin(\t r)},{0.*0.2*cos(\t r)+1.*0.2*sin(\t r)});
\draw [shift={(3.,-7.)}] plot[domain=-3.14159265359:0.,variable=\t]({1.*0.2*cos(\t r)+0.*0.2*sin(\t r)},{0.*0.2*cos(\t r)+1.*0.2*sin(\t r)});
\draw [shift={(3.8,-7.)}] plot[domain=0.:3.14159265359,variable=\t]({1.*0.2*cos(\t r)+0.*0.2*sin(\t r)},{0.*0.2*cos(\t r)+1.*0.2*sin(\t r)});
\draw [shift={(3.8,-7.)}] plot[domain=-3.14159265359:0.,variable=\t]({1.*0.2*cos(\t r)+0.*0.2*sin(\t r)},{0.*0.2*cos(\t r)+1.*0.2*sin(\t r)});
\draw [shift={(4.6,-7.)}] plot[domain=0.:pi,variable=\t]({1.*0.2*cos(\t r)+0.*0.2*sin(\t r)},{0.*0.2*cos(\t r)+1.*0.2*sin(\t r)});
\draw [shift={(4.6,-7.)}] plot[domain=-3.14159265359:0.,variable=\t]({1.*0.2*cos(\t r)+0.*0.2*sin(\t r)},{0.*0.2*cos(\t r)+1.*0.2*sin(\t r)});
\draw [shift={(5.4,-7.)}] plot[domain=0.:3.14159265359,variable=\t]({1.*0.2*cos(\t r)+0.*0.2*sin(\t r)},{0.*0.2*cos(\t r)+1.*0.2*sin(\t r)});
\draw [shift={(5.4,-7.)}] plot[domain=-3.14159265359:0.,variable=\t]({1.*0.2*cos(\t r)+0.*0.2*sin(\t r)},{0.*0.2*cos(\t r)+1.*0.2*sin(\t r)});
\draw [shift={(6.2,-7.)}] plot[domain=0.:3.14159265359,variable=\t]({1.*0.2*cos(\t r)+0.*0.2*sin(\t r)},{0.*0.2*cos(\t r)+1.*0.2*sin(\t r)});
\draw [shift={(6.2,-7.)}] plot[domain=-3.14159265359:0.,variable=\t]({1.*0.2*cos(\t r)+0.*0.2*sin(\t r)},{0.*0.2*cos(\t r)+1.*0.2*sin(\t r)});
\draw [shift={(7.,-7.)}] plot[domain=0.:3.14159265359,variable=\t]({1.*0.2*cos(\t r)+0.*0.2*sin(\t r)},{0.*0.2*cos(\t r)+1.*0.2*sin(\t r)});
\draw [shift={(7.,-7.)}] plot[domain=-3.14159265359:0.,variable=\t]({1.*0.2*cos(\t r)+0.*0.2*sin(\t r)},{0.*0.2*cos(\t r)+1.*0.2*sin(\t r)});
\draw [shift={(7.8,-7.)}] plot[domain=0.:3.14159265359,variable=\t]({1.*0.2*cos(\t r)+0.*0.2*sin(\t r)},{0.*0.2*cos(\t r)+1.*0.2*sin(\t r)});
\draw [shift={(7.8,-7.)}] plot[domain=-3.14159265359:0.,variable=\t]({1.*0.2*cos(\t r)+0.*0.2*sin(\t r)},{0.*0.2*cos(\t r)+1.*0.2*sin(\t r)});
\draw [shift={(7.8,-7.)}] plot[domain=-1.57079632679:1.57079632679,variable=\t]({1.*0.6*cos(\t r)+0.*0.6*sin(\t r)},{0.*0.6*cos(\t r)+1.*0.6*sin(\t r)});
\draw [shift={(1.4,-7.)}] plot[domain=1.57079632679:4.71238898038,variable=\t]({1.*0.6*cos(\t r)+0.*0.6*sin(\t r)},{0.*0.6*cos(\t r)+1.*0.6*sin(\t r)});
\draw (1.4,-6.4)-- (7.8,-6.4);
\draw (1.4,-7.6)-- (7.8,-7.6);
\draw [shift={(9.6,-7.)}] plot[domain=0.:3.14159265359,variable=\t]({1.*0.2*cos(\t r)+0.*0.2*sin(\t r)},{0.*0.2*cos(\t r)+1.*0.2*sin(\t r)});
\draw [shift={(9.6,-7.)}] plot[domain=-3.14159265359:0.,variable=\t]({1.*0.2*cos(\t r)+0.*0.2*sin(\t r)},{0.*0.2*cos(\t r)+1.*0.2*sin(\t r)});
\draw [shift={(10.4,-7.)}] plot[domain=0.:3.14159265359,variable=\t]({1.*0.2*cos(\t r)+0.*0.2*sin(\t r)},{0.*0.2*cos(\t r)+1.*0.2*sin(\t r)});
\draw [shift={(10.4,-7.)}] plot[domain=-3.14159265359:0.,variable=\t]({1.*0.2*cos(\t r)+0.*0.2*sin(\t r)},{0.*0.2*cos(\t r)+1.*0.2*sin(\t r)});
\draw [shift={(11.2,-7.)}] plot[domain=0.:3.14159265359,variable=\t]({1.*0.2*cos(\t r)+0.*0.2*sin(\t r)},{0.*0.2*cos(\t r)+1.*0.2*sin(\t r)});
\draw [shift={(11.2,-7.)}] plot[domain=-3.14159265359:0.,variable=\t]({1.*0.2*cos(\t r)+0.*0.2*sin(\t r)},{0.*0.2*cos(\t r)+1.*0.2*sin(\t r)});
\draw [shift={(12.,-7.)}] plot[domain=0.:3.14159265359,variable=\t]({1.*0.2*cos(\t r)+0.*0.2*sin(\t r)},{0.*0.2*cos(\t r)+1.*0.2*sin(\t r)});
\draw [shift={(12.,-7.)}] plot[domain=-3.14159265359:0.,variable=\t]({1.*0.2*cos(\t r)+0.*0.2*sin(\t r)},{0.*0.2*cos(\t r)+1.*0.2*sin(\t r)});
\draw [shift={(12.8,-7.)}] plot[domain=0.:3.14159265359,variable=\t]({1.*0.2*cos(\t r)+0.*0.2*sin(\t r)},{0.*0.2*cos(\t r)+1.*0.2*sin(\t r)});
\draw [shift={(12.8,-7.)}] plot[domain=-3.14159265359:0.,variable=\t]({1.*0.2*cos(\t r)+0.*0.2*sin(\t r)},{0.*0.2*cos(\t r)+1.*0.2*sin(\t r)});
\draw [shift={(13.6,-7.)}] plot[domain=0.:3.14159265359,variable=\t]({1.*0.2*cos(\t r)+0.*0.2*sin(\t r)},{0.*0.2*cos(\t r)+1.*0.2*sin(\t r)});
\draw [shift={(13.6,-7.)}] plot[domain=-3.14159265359:0.,variable=\t]({1.*0.2*cos(\t r)+0.*0.2*sin(\t r)},{0.*0.2*cos(\t r)+1.*0.2*sin(\t r)});
\draw [shift={(14.4,-7.)}] plot[domain=0.:3.14159265359,variable=\t]({1.*0.2*cos(\t r)+0.*0.2*sin(\t r)},{0.*0.2*cos(\t r)+1.*0.2*sin(\t r)});
\draw [shift={(14.4,-7.)}] plot[domain=-3.14159265359:0.,variable=\t]({1.*0.2*cos(\t r)+0.*0.2*sin(\t r)},{0.*0.2*cos(\t r)+1.*0.2*sin(\t r)});
\draw [shift={(15.2,-7.)}] plot[domain=0.:3.14159265359,variable=\t]({1.*0.2*cos(\t r)+0.*0.2*sin(\t r)},{0.*0.2*cos(\t r)+1.*0.2*sin(\t r)});
\draw [shift={(15.2,-7.)}] plot[domain=-3.14159265359:0.,variable=\t]({1.*0.2*cos(\t r)+0.*0.2*sin(\t r)},{0.*0.2*cos(\t r)+1.*0.2*sin(\t r)});
\draw [shift={(16.,-7.)}] plot[domain=0.:3.14159265359,variable=\t]({1.*0.2*cos(\t r)+0.*0.2*sin(\t r)},{0.*0.2*cos(\t r)+1.*0.2*sin(\t r)});
\draw [shift={(16.,-7.)}] plot[domain=-3.14159265359:0.,variable=\t]({1.*0.2*cos(\t r)+0.*0.2*sin(\t r)},{0.*0.2*cos(\t r)+1.*0.2*sin(\t r)});
\draw [shift={(16.,-7.)}] plot[domain=-1.57079632679:1.57079632679,variable=\t]({1.*0.6*cos(\t r)+0.*0.6*sin(\t r)},{0.*0.6*cos(\t r)+1.*0.6*sin(\t r)});
\draw [shift={(9.6,-7.)}] plot[domain=1.57079632679:4.71238898038,variable=\t]({1.*0.6*cos(\t r)+0.*0.6*sin(\t r)},{0.*0.6*cos(\t r)+1.*0.6*sin(\t r)});
\draw (9.6,-6.4)-- (16.,-6.4);
\draw (9.6,-7.6)-- (16.,-7.6);
\draw [shift={(2.84412764367,-5.310844427)}] plot[domain=2.94147096474:5.76362503686,variable=\t]({1.*0.179568671171*cos(\t r)+0.*0.179568671171*sin(\t r)},{0.*0.179568671171*cos(\t r)+1.*0.179568671171*sin(\t r)});
\draw [shift={(2.21185560043,-5.25229387043)}] plot[domain=-0.0500458332301:1.30436151112,variable=\t]({1.*0.45685913469*cos(\t r)+0.*0.45685913469*sin(\t r)},{0.*0.45685913469*cos(\t r)+1.*0.45685913469*sin(\t r)});
\draw [shift={(2.20324069166,-5.19368932288)}] plot[domain=1.24545879202:3.15724127767,variable=\t]({1.*0.403290069379*cos(\t r)+0.*0.403290069379*sin(\t r)},{0.*0.403290069379*cos(\t r)+1.*0.403290069379*sin(\t r)});
\draw [shift={(2.20234461682,-5.19765538318)}] plot[domain=3.14741997223:4.70656166174,variable=\t]({1.*0.402351448259*cos(\t r)+0.*0.402351448259*sin(\t r)},{0.*0.402351448259*cos(\t r)+1.*0.402351448259*sin(\t r)});
\draw [shift={(3.4,-5.1)},dash pattern=on 1pt off 1pt]  plot[domain=3.78509376238:5.63968419839,variable=\t]({1.*0.5*cos(\t r)+0.*0.5*sin(\t r)},{0.*0.5*cos(\t r)+1.*0.5*sin(\t r)});
\draw [shift={(3.95168170858,-5.28412323312)}] plot[domain=-2.48922513648:0.273225539431,variable=\t]({1.*0.190878929749*cos(\t r)+0.*0.190878929749*sin(\t r)},{0.*0.190878929749*cos(\t r)+1.*0.190878929749*sin(\t r)});
\draw [shift={(4.52504369272,-5.18521232012)}] plot[domain=1.37861345854:3.26268318374,variable=\t]({1.*0.39243723011*cos(\t r)+0.*0.39243723011*sin(\t r)},{0.*0.39243723011*cos(\t r)+1.*0.39243723011*sin(\t r)});
\draw [shift={(6.86186813493,38.7945967856)}] plot[domain=4.66055132272:4.71608864562,variable=\t]({1.*43.6532348899*cos(\t r)+0.*43.6532348899*sin(\t r)},{0.*43.6532348899*cos(\t r)+1.*43.6532348899*sin(\t r)});
\draw [shift={(7.05172294218,-5.06016856884)}] plot[domain=-0.756092505714:1.71036228958,variable=\t]({1.*0.203810978647*cos(\t r)+0.*0.203810978647*sin(\t r)},{0.*0.203810978647*cos(\t r)+1.*0.203810978647*sin(\t r)});
\draw [shift={(7.0011693984,-5.46705646613)},dash pattern=on 1pt off 1pt]  plot[domain=-1.57430861485:0.930807506054,variable=\t]({1.*0.332945587499*cos(\t r)+0.*0.332945587499*sin(\t r)},{0.*0.332945587499*cos(\t r)+1.*0.332945587499*sin(\t r)});
\draw (2.2,-5.6)-- (7.,-5.8);
\draw [shift={(2.64235700078,-7.00921136885)},dash pattern=on 1pt off 1pt]  plot[domain=1.89374894546:3.12725368394,variable=\t]({1.*0.642423042683*cos(\t r)+0.*0.642423042683*sin(\t r)},{0.*0.642423042683*cos(\t r)+1.*0.642423042683*sin(\t r)});
\draw [shift={(2.40765444411,-6.70156613328)}] plot[domain=-0.589446383789:1.46895638004,variable=\t]({1.*0.303136747533*cos(\t r)+0.*0.303136747533*sin(\t r)},{0.*0.303136747533*cos(\t r)+1.*0.303136747533*sin(\t r)});
\draw [shift={(2.85368297531,-7.01041985871)}] plot[domain=2.51544748968:5.36969366108,variable=\t]({1.*0.239477142304*cos(\t r)+0.*0.239477142304*sin(\t r)},{0.*0.239477142304*cos(\t r)+1.*0.239477142304*sin(\t r)});
\draw [shift={(3.4,-6.9)},dash pattern=on 1pt off 1pt]  plot[domain=3.78509376238:5.63968419839,variable=\t]({1.*0.5*cos(\t r)+0.*0.5*sin(\t r)},{0.*0.5*cos(\t r)+1.*0.5*sin(\t r)});
\draw [shift={(3.96726779722,-7.09916842837)}] plot[domain=3.68407958752:6.03822128702,variable=\t]({1.*0.195308785837*cos(\t r)+0.*0.195308785837*sin(\t r)},{0.*0.195308785837*cos(\t r)+1.*0.195308785837*sin(\t r)});
\draw [shift={(4.64628041063,-7.09054746747)}] plot[domain=1.664862305:3.25546676311,variable=\t]({1.*0.492725779973*cos(\t r)+0.*0.492725779973*sin(\t r)},{0.*0.492725779973*cos(\t r)+1.*0.492725779973*sin(\t r)});
\draw [shift={(2.24489756347,-7.19777799524)}] plot[domain=2.46223787069:4.54286794338,variable=\t]({1.*0.314787153479*cos(\t r)+0.*0.314787153479*sin(\t r)},{0.*0.314787153479*cos(\t r)+1.*0.314787153479*sin(\t r)});
\draw (2.19178973748,-7.5080528908)-- (6.4,-7.6);
\draw (4.6,-6.6)-- (6.2,-6.6);
\draw [shift={(6.23381715131,-7.19927427304)}] plot[domain=0.338420729923:1.62716671657,variable=\t]({1.*0.600227668514*cos(\t r)+0.*0.600227668514*sin(\t r)},{0.*0.600227668514*cos(\t r)+1.*0.600227668514*sin(\t r)});
\draw [shift={(6.15337379735,-7.00224919823)},dash pattern=on 1pt off 1pt]  plot[domain=-1.17948355232:0.00347834522041,variable=\t]({1.*0.646630114399*cos(\t r)+0.*0.646630114399*sin(\t r)},{0.*0.646630114399*cos(\t r)+1.*0.646630114399*sin(\t r)});
\draw [shift={(11.6,0.3)},dash pattern=on 1pt off 1pt]  plot[domain=3.78509376238:5.63968419839,variable=\t]({1.*0.5*cos(\t r)+0.*0.5*sin(\t r)},{0.*0.5*cos(\t r)+1.*0.5*sin(\t r)});
\draw [shift={(11.091959188,0.6)},dash pattern=on 1pt off 1pt]  plot[domain=-1.07550508669:1.07550508669,variable=\t]({1.*0.22731655694*cos(\t r)+0.*0.22731655694*sin(\t r)},{0.*0.22731655694*cos(\t r)+1.*0.22731655694*sin(\t r)});
\draw [shift={(11.1726809256,0.4)}] plot[domain=1.50260453772:4.78058076946,variable=\t]({1.*0.400931829402*cos(\t r)+0.*0.400931829402*sin(\t r)},{0.*0.400931829402*cos(\t r)+1.*0.400931829402*sin(\t r)});
\draw [shift={(11.3504410098,0.455704007952)}] plot[domain=0.57660474968:3.49621129799,variable=\t]({1.*0.160422672774*cos(\t r)+0.*0.160422672774*sin(\t r)},{0.*0.160422672774*cos(\t r)+1.*0.160422672774*sin(\t r)});
\draw [shift={(12.7113349326,0.940033753317)}] plot[domain=3.45456114194:4.47133193267,variable=\t]({1.*1.28902452509*cos(\t r)+0.*1.28902452509*sin(\t r)},{0.*1.28902452509*cos(\t r)+1.*1.28902452509*sin(\t r)});
\draw [shift={(12.1425206687,0.0614244163826)}] plot[domain=3.54852257229:6.20819803398,variable=\t]({1.*0.155193749664*cos(\t r)+0.*0.155193749664*sin(\t r)},{0.*0.155193749664*cos(\t r)+1.*0.155193749664*sin(\t r)});
\draw [shift={(12.8965204628,0.00703850188178)}] plot[domain=1.73215804231:3.07035782848,variable=\t]({1.*0.600765792966*cos(\t r)+0.*0.600765792966*sin(\t r)},{0.*0.600765792966*cos(\t r)+1.*0.600765792966*sin(\t r)});
\draw (12.8,0.6)-- (14.0495768083,0.69627682927);
\draw [shift={(14.1115288288,0.211611119414)}] plot[domain=-0.0237658514341:1.69793112698,variable=\t]({1.*0.488609151737*cos(\t r)+0.*0.488609151737*sin(\t r)},{0.*0.488609151737*cos(\t r)+1.*0.488609151737*sin(\t r)});
\draw [shift={(14.1494099235,0.0670600509879)},dash pattern=on 1pt off 1pt]  plot[domain=-1.46290097028:0.286895763184,variable=\t]({1.*0.469791918904*cos(\t r)+0.*0.469791918904*sin(\t r)},{0.*0.469791918904*cos(\t r)+1.*0.469791918904*sin(\t r)});
\draw (12.4036070822,-0.311720134782)-- (14.2,-0.4);
\draw [shift={(13.6674432714,-1.74883509295)}] plot[domain=0.272519716492:1.69306773,variable=\t]({1.*0.552963429268*cos(\t r)+0.*0.552963429268*sin(\t r)},{0.*0.552963429268*cos(\t r)+1.*0.552963429268*sin(\t r)});
\draw [shift={(13.3358690313,-1.3358690313)},dash pattern=on 1pt off 1pt]  plot[domain=5.00903104989:5.98654323767,variable=\t]({1.*0.903596978576*cos(\t r)+0.*0.903596978576*sin(\t r)},{0.*0.903596978576*cos(\t r)+1.*0.903596978576*sin(\t r)});
\draw [shift={(12.,-3.4)}] plot[domain=1.57079632679:4.71238898038,variable=\t]({1.*0.4*cos(\t r)+0.*0.4*sin(\t r)},{0.*0.4*cos(\t r)+1.*0.4*sin(\t r)});
\draw [shift={(13.237994438,-27.6050383422)}] plot[domain=1.55595161022:1.62106860503,variable=\t]({1.*24.6361632981*cos(\t r)+0.*24.6361632981*sin(\t r)},{0.*24.6361632981*cos(\t r)+1.*24.6361632981*sin(\t r)});
\draw [shift={(13.6053551647,-3.23000918079)}] plot[domain=-0.717888170758:1.57720943668,variable=\t]({1.*0.258425019169*cos(\t r)+0.*0.258425019169*sin(\t r)},{0.*0.258425019169*cos(\t r)+1.*0.258425019169*sin(\t r)});
\draw [shift={(13.5428820686,-3.64762735621)},dash pattern=on 1pt off 1pt]  plot[domain=-1.41009878568:0.766597676882,variable=\t]({1.*0.356971901106*cos(\t r)+0.*0.356971901106*sin(\t r)},{0.*0.356971901106*cos(\t r)+1.*0.356971901106*sin(\t r)});
\draw [shift={(13.8916564715,4.83325177217)}] plot[domain=4.49668501912:4.67938295255,variable=\t]({1.*8.83806541998*cos(\t r)+0.*8.83806541998*sin(\t r)},{0.*8.83806541998*cos(\t r)+1.*8.83806541998*sin(\t r)});
\draw [shift={(12.0446193221,-4.94820644069)},dash pattern=on 1pt off 1pt]  plot[domain=1.69824222973:3.94144196866,variable=\t]({1.*0.351053570322*cos(\t r)+0.*0.351053570322*sin(\t r)},{0.*0.351053570322*cos(\t r)+1.*0.351053570322*sin(\t r)});
\draw [shift={(11.86,-6.86)}] plot[domain=0.822880600089:1.50892848093,variable=\t]({1.*2.26433213112*cos(\t r)+0.*2.26433213112*sin(\t r)},{0.*2.26433213112*cos(\t r)+1.*2.26433213112*sin(\t r)});
\draw [shift={(12.9033183525,-5.30221223501)},dash pattern=on 1pt off 1pt]  plot[domain=-1.37896205464:0.202956847541,variable=\t]({1.*0.507089735587*cos(\t r)+0.*0.507089735587*sin(\t r)},{0.*0.507089735587*cos(\t r)+1.*0.507089735587*sin(\t r)});
\draw [shift={(11.8656208584,-5.4)}] plot[domain=1.88783338487:4.39535192231,variable=\t]({1.*0.210490135306*cos(\t r)+0.*0.210490135306*sin(\t r)},{0.*0.210490135306*cos(\t r)+1.*0.210490135306*sin(\t r)});
\draw [shift={(13.0225048259,-1.9649710449)}] plot[domain=4.3879597866:4.70652081934,variable=\t]({1.*3.83509498626*cos(\t r)+0.*3.83509498626*sin(\t r)},{0.*3.83509498626*cos(\t r)+1.*3.83509498626*sin(\t r)});
\draw [shift={(10.,-7.18290483755)}] plot[domain=0.740781809811:2.40081084378,variable=\t]({1.*0.271024315515*cos(\t r)+0.*0.271024315515*sin(\t r)},{0.*0.271024315515*cos(\t r)+1.*0.271024315515*sin(\t r)});
\draw [shift={(11.6,-7.11098938077)}] plot[domain=0.506633282441:2.63495937115,variable=\t]({1.*0.228732688184*cos(\t r)+0.*0.228732688184*sin(\t r)},{0.*0.228732688184*cos(\t r)+1.*0.228732688184*sin(\t r)});
\draw [shift={(10.,-6.88323397358)},dash pattern=on 1pt off 1pt]  plot[domain=3.67003769038:5.75474027039,variable=\t]({1.*0.231590813562*cos(\t r)+0.*0.231590813562*sin(\t r)},{0.*0.231590813562*cos(\t r)+1.*0.231590813562*sin(\t r)});
\draw [shift={(11.6,-6.79398846144)},dash pattern=on 1pt off 1pt]  plot[domain=3.94179605958:5.48298190119,variable=\t]({1.*0.287124979793*cos(\t r)+0.*0.287124979793*sin(\t r)},{0.*0.287124979793*cos(\t r)+1.*0.287124979793*sin(\t r)});
\draw [shift={(9.6,-7.)}] plot[domain=1.57079632679:4.71238898038,variable=\t]({1.*0.4*cos(\t r)+0.*0.4*sin(\t r)},{0.*0.4*cos(\t r)+1.*0.4*sin(\t r)});
\draw (9.6,-6.6)-- (10.4,-6.6);
\draw (9.6,-7.4)-- (10.4,-7.4);
\draw (11.2,-6.6)-- (12.,-6.6);
\draw (11.2,-7.4)-- (12.,-7.4);
\draw [shift={(10.3050301141,-7.)}] plot[domain=-1.33768779287:1.33768779287,variable=\t]({1.*0.411119543711*cos(\t r)+0.*0.411119543711*sin(\t r)},{0.*0.411119543711*cos(\t r)+1.*0.411119543711*sin(\t r)});
\draw [shift={(11.2805659651,-7.)}] plot[domain=1.76955200867:4.51363329851,variable=\t]({1.*0.408032933386*cos(\t r)+0.*0.408032933386*sin(\t r)},{0.*0.408032933386*cos(\t r)+1.*0.408032933386*sin(\t r)});
\draw [shift={(12.,-7.)}] plot[domain=-1.57079632679:1.57079632679,variable=\t]({1.*0.4*cos(\t r)+0.*0.4*sin(\t r)},{0.*0.4*cos(\t r)+1.*0.4*sin(\t r)});
\draw (9.8,-7) node[anchor=north west] {$c_2$};
\draw (11.3,-7) node[anchor=north west] {$c_4$};
\draw (9.85705947656,-6.02537301504) node[anchor=north west] {$d_1$};
\draw (11.4224316845,-6.00939982924) node[anchor=north west] {$d_3$};
\draw (12.0613591163,-5.65798974175) node[anchor=north west] {$\Phi_{0,0,0,0}(B_8)$};
\draw (12.0453859305,-3.85301974692) node[anchor=north west] {$\Phi_{0,0,0,0}(B_7)$};
\draw (12.0613591163,-2.06402293788) node[anchor=north west] {$\Phi_{0,0,0,0}(B_6)$};
\draw (12.0453859305,-0.290999314637) node[anchor=north west] {$\Phi_{0,0,0,0}(B_5)$};
\draw (4.05879303299,-0.290999314637) node[anchor=north west] {$\Phi_{0,0,0,0}(B_0)$};
\draw (4.05879303299,-2.09596930947) node[anchor=north west] {$\Phi_{0,0,0,0}(B_1)$};
\draw (4.05879303299,-3.86899293271) node[anchor=north west] {$\Phi_{0,0,0,0}(B_2)$};
\draw (4.05879303299,-5.68993611334) node[anchor=north west] {$\Phi_{0,0,0,0}(B_3)$};
\draw (4.07476621878,-7.49490610817) node[anchor=north west] {$\Phi_{0,0,0,0}(B_4)$};
\draw [shift={(11.6,-1.5)},dash pattern=on 1pt off 1pt]  plot[domain=3.78509376238:5.63968419839,variable=\t]({1.*0.5*cos(\t r)+0.*0.5*sin(\t r)},{0.*0.5*cos(\t r)+1.*0.5*sin(\t r)});
\draw [shift={(12.1340061401,-1.71073136274)}] plot[domain=3.72923979005:6.16470004941,variable=\t]({1.*0.161017189109*cos(\t r)+0.*0.161017189109*sin(\t r)},{0.*0.161017189109*cos(\t r)+1.*0.161017189109*sin(\t r)});
\draw [shift={(12.8911459846,-1.79370930327)}] plot[domain=1.7231265864:3.0349345424,variable=\t]({1.*0.60066490433*cos(\t r)+0.*0.60066490433*sin(\t r)},{0.*0.60066490433*cos(\t r)+1.*0.60066490433*sin(\t r)});
\draw (12.8,-1.2)-- (13.6,-1.2);
\draw [shift={(11.0299780217,-1.68171841323)}] plot[domain=3.3274015868:5.67536179751,variable=\t]({1.*0.207118340245*cos(\t r)+0.*0.207118340245*sin(\t r)},{0.*0.207118340245*cos(\t r)+1.*0.207118340245*sin(\t r)});
\draw [shift={(11.2258655686,-1.58546373823)}] plot[domain=2.02311965877:3.4664293605,variable=\t]({1.*0.421483151335*cos(\t r)+0.*0.421483151335*sin(\t r)},{0.*0.421483151335*cos(\t r)+1.*0.421483151335*sin(\t r)});
\draw [shift={(11.2196210725,-1.51732665849)}] plot[domain=0.131038054063:2.09061228957,variable=\t]({1.*0.358284870949*cos(\t r)+0.*0.358284870949*sin(\t r)},{0.*0.358284870949*cos(\t r)+1.*0.358284870949*sin(\t r)});
\draw [shift={(12.1778216391,-1.47964966671)}] plot[domain=3.12643973763:4.42224323956,variable=\t]({1.*0.603056579088*cos(\t r)+0.*0.603056579088*sin(\t r)},{0.*0.603056579088*cos(\t r)+1.*0.603056579088*sin(\t r)});
\draw [shift={(14.982678579,22.267814363)}] plot[domain=4.59059609275:4.65593892069,variable=\t]({1.*24.5068508739*cos(\t r)+0.*24.5068508739*sin(\t r)},{0.*24.5068508739*cos(\t r)+1.*24.5068508739*sin(\t r)});
\end{tikzpicture}

\caption{Simple closed curves $\Phi_{K_0\sharp K_0}(B_i)$}
\label{fig:phi00}
\end{center}
\end{figure}
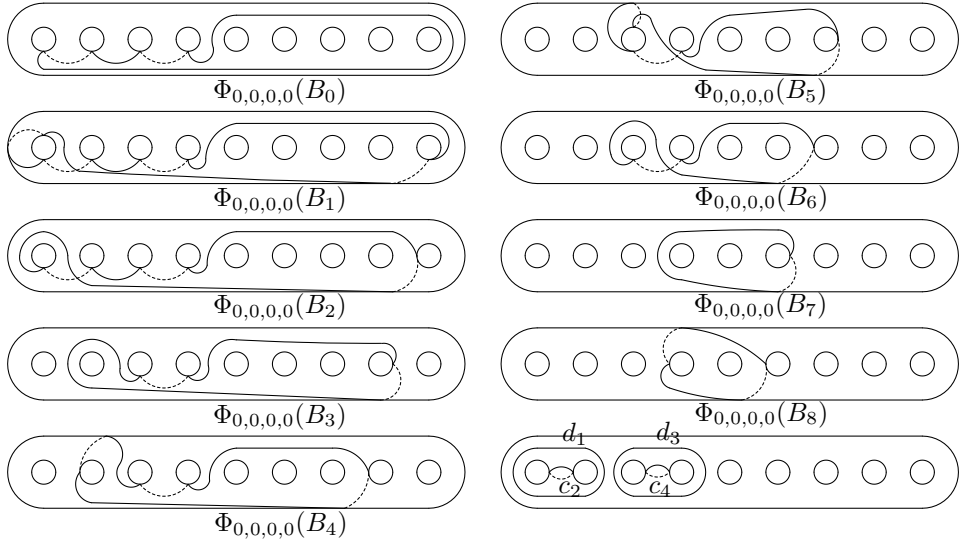

Note that we have the following relations in 
 $H_1(\Sigma_{4n+1}; \mathbb{Z}_2)$:
\[
B_0 = \sum_{i=1}^{4n} B_i + a_{2n+1}, \ \ B_{4n+1} = a_{2n+1}
\]
and from Figure~\ref{fig:phi00}, for $1 \leq j \leq n$,
\begin{align}
\Phi_{p_1, q_1, \cdots, p_n, q_n}(B_0) &= B_0 + 
\sum_{i=1}^n (a_{2i -1} + a_{2i} + b_{2i-1} + b_{2i}), \\
\Phi_{p_1, q_1, \cdots, p_n, q_n}(B_{4(j-1) + 1}) 
& = B_{4(j-1) +1} + b_{2j-1} + b_{2j} + a_{2j} + \varepsilon_{2j-1} c_{2j} \\
&+ \varepsilon_{2j} d_{2j -1} + 
\sum_{i=j+1}^n (a_{2i -1} + a_{2i} + b_{2i-1} + b_{2i}), \notag \\
\Phi_{p_1, q_1, \cdots, p_n, q_n}(B_{4(j-1) + 2}) 
& = B_{4(j-1) +2}  + b_{2j} + a_{2j-1} +  a_{2j} + \varepsilon_{2j-1} c_{2j}  \\
& + \sum_{i=j+1}^n (a_{2i -1} + a_{2i} + b_{2i-1} + b_{2i}),  \notag \\
\Phi_{p_1, q_1, \cdots, p_n, q_n}(B_{4(j-1) + 3}) 
& = B_{4(j-1) +3}  + b_{2j} + \varepsilon_{2j-1} c_{2j} \\
& + \sum_{i=j+1}^n (a_{2i -1} + a_{2i} + b_{2i-1} + b_{2i}), \notag \\
\Phi_{p_1, q_1, \cdots, p_n, q_n}(B_{4(j-1) + 4}) 
& = B_{4(j-1) +4}  + a_{2j} + \varepsilon_{2j-1} c_{2j} + \epsilon_{2j} d_{2j -1} \\
& + \sum_{i=j+1}^n (a_{2i -1} + a_{2i} + b_{2i-1} + b_{2i}). \notag
\end{align}

Now, for each $\sum_{i=1}^{2n} \varepsilon_{i} 2^{i-1} \in \{ 0, 1, \cdots, 2^{2n} -1\}$, 
we want to construct a basis $\mathcal{B}_{\sum_{i=1}^{2n} \varepsilon_{i} 2^{i-1}}$ 
of $H_1(\Sigma_{4n +1} ; \mathbb{Z}_2)$ which satisfies
\begin{itemize}
\item $\{ B_1, B_2, \cdots, B_{4n}, a_{2n+1}, b_{2n+1}\} 
\subset \mathcal{B}_{\sum_{i=1}^{2n} \varepsilon_{i} 2^{i-1}}$ and
\item $G_F(\Phi_{p_1, q_1, \cdots, p_n, q_n}(\eta_{1, 2n}^2) \cdot \eta_{1, 2n}^2 ) 
\leq G_{\Gamma( \mathcal{B}_{\sum_{i=1}^{2n} \varepsilon_{i} 2^{i-1}})}$.
\end{itemize}

Note that Equations (3.1)--(3.5)  and the second condition for 
$\mathcal{B}_{\sum_{i=1}^{2n} \varepsilon_{i} 2^{i-1}}$ imply (a)--(e) 
for $1 \leq j \leq n$, and we add one more condition (f) as follows:
\begin{enumerate}[label=(\alph*)]
\item even number of $\cup_{i=1}^n\{a_{2i-1}, a_{2i}, b_{2i-1}, b_{2i}\}$ 
has $\chi_{\Gamma(\mathcal{B}_{\sum_{i=1}^{2n} \varepsilon_{i} 2^{i-1}})} = 0$.
\item even number of $$\{b_{2j-1}, b_{2j}, a_{2j}, \varepsilon_{2j-1}c_{2j}, 
\varepsilon_{2j}d_{2j-1}\} \cup \cup_{i=j+1}^n\{a_{2i-1}, a_{2i}, b_{2i-1}, b_{2i}\}$$ 
has $\chi_{\Gamma(\mathcal{B}_{\sum_{i=1}^{2n} \varepsilon_{i} 2^{i-1}})} = 0$.
\item even number of $$\{ b_{2j}, a_{2j-1},a_{2j}, \varepsilon_{2j-1}c_{2j}\} 
\cup \cup_{i=j+1}^n\{a_{2i-1}, a_{2i}, b_{2i-1}, b_{2i}\}$$
has $\chi_{\Gamma(\mathcal{B}_{\sum_{i=1}^{2n} \varepsilon_{i} 2^{i-1}})} = 0$.
\item even number of $$\{b_{2j}, \varepsilon_{2j-1}c_{2j}\} 
\cup \cup_{i=j+1}^n\{a_{2i-1}, a_{2i}, b_{2i-1}, b_{2i}\}$$ 
has $\chi_{\Gamma(\mathcal{B}_{\sum_{i=1}^{2n} \varepsilon_{i} 2^{i-1}})} = 0$.
\item even number of 
$$\{a_{2j}, \varepsilon_{2j-1}c_{2j}, \varepsilon_{2j}d_{2j-1}\} 
\cup \cup_{i=j+1}^n\{a_{2i-1}, a_{2i}, b_{2i-1}, b_{2i}\}$$
has $\chi_{\Gamma(\mathcal{B}_{\sum_{i=1}^{2n} \varepsilon_{i} 2^{i-1}})} = 0$.
\item $\chi_{\Gamma(\mathcal{B}_{\sum_{i=1}^{2n} \varepsilon_{i} 2^{i-1}})} (c_{2j}) = 0 
= \chi_{\Gamma(\mathcal{B}_{\sum_{i=1}^{2n} \varepsilon_{i} 2^{i-1}})} (d_{2j-1})$
for $1 \leq j \leq n$.
\end{enumerate}
Then these systems of conditions give a unique solution and we can prove it 
by using induction on $n$, the number of connected summed Stallings knots. 
For example, $n=1$ case was already proved in Proposition~\ref{proposition:step1} above. 
Now we will show why the inductive step works.
Let us consider the following table:
\begin{table}[htdp]
%\caption{Table for inductive step}
\begin{center}
\begin{tabular}{c||cccccc|cccccc}
\hline
$\Phi_{p_1,q_1, p_2, q_2}(B_0)$ &
$b_1$ & $b_2$ & $a_1$ & $a_2$ & &  &$b_3$ &$b_4$ & $a_3$ & $a_4$ & & \\ \hline
$\Phi_{p_1,q_1, p_2, q_2}(B_1)$ &
$b_1$ & $b_2$ &   & $a_2$ & $\varepsilon_{p_1} c_2$ & $\varepsilon_{q_1} d_1$  
&$b_3$ &$b_4$ & $a_3$ & $a_4$ & &  \\
$\Phi_{p_1,q_1, p_2, q_2}(B_2)$ &
 & $b_2$ & $a_1$  & $a_2$ & $\varepsilon_{p_1} c_2$ & 
&$b_3$ &$b_4$ & $a_3$ & $a_4$ & &  \\
$\Phi_{p_1,q_1, p_2, q_2}(B_3)$ &
 &$b_2$ &   &  & $\varepsilon_{p_1} c_2$ &
&$b_3$ &$b_4$ & $a_3$ & $a_4$ & &  \\
$\Phi_{p_1,q_1, p_2, q_2}(B_4)$ &
&  &   & $a_2$ & $\varepsilon_{p_1} c_2$ & $\varepsilon_{q_1} d_1$  
&$b_3$ &$b_4$ & $a_3$ & $a_4$ & &  \\ \hline
$\Phi_{p_1,q_1, p_2, q_2}(B_5)$ &
& & & & & & 
$b_3$ & $b_4$ &   & $a_4$ & $\varepsilon_{p_2} c_4$ & $\varepsilon_{q_2} d_3$ \\ 
$\Phi_{p_1,q_1, p_2, q_2}(B_6)$ &
& & & & & & 
 & $b_4$ & $a_3$   & $a_4$ & $\varepsilon_{p_2} c_4$ &  \\ 
 $\Phi_{p_1,q_1, p_2, q_2}(B_7)$ &
 & & & & & & 
 & $b_4$ &   & & $\varepsilon_{p_2} c_4$ & \\ 
 $\Phi_{p_1,q_1, p_2, q_2}(B_8)$ &
 & & & & & & 
 &  &   & $a_4$ & $\varepsilon_{p_2} c_4$ & $\varepsilon_{q_2} d_3$ \\ \hline
\end{tabular}
\end{center}
\end{table}%

Then the last $4$ rows coming from $\Phi_{p_1,q_1, p_2, q_2}(B_j)$ 
($ 5 \leq j \leq 8$) has a unique solution for each given 
$(\varepsilon_{p_2}, \varepsilon_{q_2}) \in \{0,1\} \times \{0,1\}$. 
In each case even number of $\{b_3, b_4, a_3, a_4\}$ has modulo $2$ Euler number $0$. 
Therefore it does not give any effect to the solution of the first 4 rows 
coming from $\Phi_{p_1,q_1, p_2, q_2}(B_j)$ ($1 \leq j \leq 4$) 
and the solution has exactly same pattern as the solution for 
\[\{\Phi_{p_1, q_1}(B_1), \Phi_{p_1, q_1}(B_2), \Phi_{p_1, q_1}(B_3), 
\Phi_{p_1, q_1}(B_4)\}.\]
Note that the condition for $\Phi_{p_1,q_1, p_2, q_2}(B_0)$ is automatically satisfied. 

Therefore $\mathcal{B}_{\sum_{i=1}^{2n} \varepsilon_{i} 2^{i-1}}$ satisfies that, for $1 \leq i \leq n$,
\begin{itemize}
\item if $(\varepsilon_{2i-1}, \varepsilon_{2i}) \equiv (0,0) \pmod{2}$, then
\[
\{a_{2i-1}, a_{2i}, b_{2i-1}, b_{2i} \} 
\subset G_{\Gamma( \mathcal{B}_{\sum_{i=1}^{2n} \varepsilon_{i} 2^{i-1}})}
\]
\item if $(\varepsilon_{2i-1}, \varepsilon_{2i}) \equiv (1,0) \pmod{2}$, then
\[
a_{2i-1}, a_{2i}, b_{2i-1}, b_{2i} 
\not\in G_{\Gamma( \mathcal{B}_{\sum_{i=1}^{2n} \varepsilon_{i} 2^{i-1}})}
\]
\item if $(\varepsilon_{2i-1}, \varepsilon_{2i}) \equiv (0,1) \pmod{2}$, then
\[
\{b_{2i-1}, b_{2i} \} 
\subset G_{\Gamma( \mathcal{B}_{\sum_{i=1}^{2n} \varepsilon_{i} 2^{i-1}})}
        \ \mathrm{and} \ a_{2i-1}, a_{2i} \not\in G_{\Gamma( \mathcal{B}_{\sum_{i=1}^{2n} \varepsilon_{i} 2^{i-1}})}
\]
\item if $(\varepsilon_{2i-1}, \varepsilon_{2i}) \equiv (1,1) \pmod{2}$, then
\[
\{a_{2i-1}, a_{2i} \} 
\subset G_{\Gamma( \mathcal{B}_{\sum_{i=1}^{2n} \varepsilon_{i} 2^{i-1}})}
        \ \mathrm{and} \ b_{2i-1}, b_{2i} \not\in G_{\Gamma( \mathcal{B}_{\sum_{i=1}^{2n} \varepsilon_{i} 2^{i-1}})}.
\]
\end{itemize}

From this observation, we can define  
$\mathcal{B}_{\sum_{i=1}^{2n} \varepsilon_{i} 2^{i-1}}$ as follows:
\begin{itemize}
\item start from $\{ B_1, B_2, \cdots, B_{4n}, a_{2n+1}, b_{2n+1}\}$
\item for $1 \leq i \leq n$, 
\begin{itemize}
\item if  $(\varepsilon_{2i-1}, \varepsilon_{2i}) \equiv (0,0) \pmod{2}$, then add
\(
\{a_{2i-1}, a_{2i}, b_{2i-1}, b_{2i} \}
\)
\item if  $(\varepsilon_{2i-1}, \varepsilon_{2i}) \equiv (1,0) \pmod{2}$, then add
\(
\{e_{2i-1}, e_{2i}, f_{2i-1}, f_{2i} \}
\)
\item if  $(\varepsilon_{2i-1}, \varepsilon_{2i}) \equiv (0,1) \pmod{2}$, then add
\(
\{e_{2i-1}, e_{2i}, b_{2i-1}, b_{2i} \}
\)
\item if  $(\varepsilon_{2i-1}, \varepsilon_{2i}) \equiv (1,1) \pmod{2}$, then add
\(
\{a_{2i-1}, a_{2i}, f_{2i-1}, f_{2i} \}
\)
\end{itemize}
\end{itemize}
to the set, where
\begin{itemize}
\item $e_i$ is a simple closed curve representing $a_i + a_{2n+1}$ 
in $H_1(\Sigma_{4n + 1}; \mathbb{Z}_2)$
\item $f_i$ is a simple closed curve representing $b_i + b_{2n+1}$ 
in $H_1(\Sigma_{4n + 1}; \mathbb{Z}_2)$.
\end{itemize}
Then each resulting set $\mathcal{B}_i (0 \leq i \leq 2^{2n} -1)$ is a basis of $H_1(\Sigma_{4n+1}; \mathbb{Z}_2)$ by Lemma~\ref{lemma:basis} and it satisfies 
\[
G_F(\Phi_{p_1, q_1, \cdots, p_n, q_n}(\eta_{1, 2n}^2) \cdot \eta_{1, 2n}^2 ) 
\leq G_{\Gamma( \mathcal{B}_{\sum_{i=1}^{2n} \varepsilon_{i} 2^{i-1}})}.
\]

Now we will show that, if 
\( (p_1, q_1, \cdots, p_n, q_n) \not\equiv (r_1, s_1, \cdots, r_n, s_n) \pmod{2} \), 
\[
G_F(\Phi_{p_1, q_1, \cdots, p_n, q_n}(\eta_{1, 2n}^2) \cdot \eta_{1, 2n}^2 ) 
\ne G_F(\Phi_{r_1, s_1, \cdots, r_n, s_n}(\eta_{1, 2n}^2) \cdot \eta_{1, 2n}^2 ).
\]

Let us observe that, for $1 \leq j \leq n$,
\begin{itemize}
\item $i_2(\Phi_{p_1, q_1, \cdots, p_n, q_n}(B_{4(j-1) +1}), c_{2j}) = 
      1 = i_2(\Phi_{p_1, q_1, \cdots, p_n, q_n}(B_{4(j-1) +1}), d_{2j-1})$ 
\item $i_2(\Phi_{p_1, q_1, \cdots, p_n, q_n}(B_{4(j-1) +2}), c_{2j}) = 1$ and
      \\
      $i_2(\Phi_{p_1, q_1, \cdots, p_n, q_n}(B_{4(j-1) +2}), d_{2j-1}) = 0$ 
\item $\chi_{\Gamma(\mathcal{B}_i)}(c_{2j}) = 0 =   
      \chi_{\Gamma(\mathcal{B}_i)}(d_{2j-1})$ for $0 \leq i \leq 2^{2n} -1$.
\end{itemize}
Then this observation together with Corollary~\ref{corollary:Humphries} implies that
\begin{itemize}
\item if $(p_j, q_j) - (r_j, s_j) \equiv (1,0) \text{ or } (0,1) \pmod{2}$ 
for some $j \in \{1, 2, \cdots, n\}$, then
\[
\Phi_{p_1, q_1, \cdots, p_n, q_n}(B_{4(j-1) +1}) \in 
G_{\Gamma(\mathcal{B}_{\sum_{i=1}^n(\varepsilon_{p_i} + 2 \varepsilon_{q_i}) 2^{2(i-1)}})}
\]
and 
\[
\Phi_{r_1, s_1, \cdots, r_n, s_n}(B_{4(j-1) +1}) \not\in 
G_{\Gamma(\mathcal{B}_{\sum_{i=1}^n(\varepsilon_{p_i} + 2 \varepsilon_{q_i}) 2^{2(i-1)}})}
\]
\item if $(p_j, q_j) - (r_j, s_j) \equiv (1,1) \pmod{2} $ for some $j \in \{1, 2, \cdots, n\}$, 
then
\[
\Phi_{p_1, q_1, \cdots, p_n, q_n}(B_{4(j-1) +2}) \in 
G_{\Gamma(\mathcal{B}_{\sum_{i=1}^n(\varepsilon_{p_i} + 2 \varepsilon_{q_i}) 2^{2(i-1)}})}
\]
and 
\[
\Phi_{r_1, s_1, \cdots, r_n, s_n}(B_{4(j-1) +2}) \not\in 
G_{\Gamma(\mathcal{B}_{\sum_{i=1}^n(\varepsilon_{p_i} + 2 \varepsilon_{q_i}) 2^{2(i-1)}})}.
\]
\end{itemize}
It gives the conclusion.
\end{proof}

\begin{remark}
1. We can obtain similar results on $E(2)_K$ using a family of Kanenobu knot $K$ with a parity of type $(1,0)$ or $(0,1)$. \\
2. In Theorem~\ref{theorem:main} above  we constructed a family of simply connected minimal symplectic $4$-manifolds $E(2)_K$ admitting arbitrarily many nonisomorphic Lefschetz fibration structures over $S^2$ of the same genus fiber. 
The next goal in this direction is to show whether there exist a family of knot surgery $4$-manifolds $E(2)_K$ admitting infinitely many nonisomorphic Lefschetz fibrations. For this, let us consider  a family of knots $K_{p,q}$ used in the proof of Proposition~\ref{proposition:step1} above. Note that the knot  $K_{p,q}$ is obtained from $K_0$ by performing Stallings twists $p$ times along $c_2$  and $q$ times along $d_1$ in Figure~\ref{fig:SCC}. Then, for a fixed integer $n \in  \mathbb{Z}$, we would claim that $\{ E(2)_{K_{p,q}} \, |\, p+q = n \}$ admits mutually nonisomorphic Lefschetz fibration structures. We investigate this problem in the forthcoming paper. 
\end{remark}

%%%%%%%%%%%%%%%%%%%%%%%%%%%%%%%%%%%%%%%%%%%%
%\bibliography
%%%%%%%%%%%%%%%%%%%%%%%%%%%%%%%%%%%%%%%%%%%%

\providecommand{\bysame}{\leavevmode\hbox to3em{\hrulefill}\thinspace}
\providecommand{\MR}{\relax\ifhmode\unskip\space\fi MR }
% \MRhref is called by the amsart/book/proc definition of \MR.
\providecommand{\MRhref}[2]{%
  \href{http://www.ams.org/mathscinet-getitem?mr=#1}{#2}
}
\providecommand{\href}[2]{#2}


\begin{thebibliography}{Hum79}

\bibitem[Aur06]{Auroux:2006}
D.~Auroux, \emph{Mapping class group factorizations and symplectic 4-manifolds:
  some open problems}, Problems on mapping class groups and related topics,
  Proc. Sympos. Pure Math., vol.~74, Amer. Math. Soc., Providence, RI, 2006,
  pp.~123--132. \MR{2264537 (2007h:53134)}

\bibitem[Bay14]{Baykur:2014}
R.~Inanc Baykur, \emph{Inequivalent lefschetz fibrations and surgery
  equivalence of symplectic 4-manifolds}.

\bibitem[Don99]{Donaldson:99}
S.~K. Donaldson, \emph{Lefschetz pencils on symplectic manifolds}, J.
  Differential Geom. \textbf{53} (1999), no.~2, 205--236. \MR{MR1802722
  (2002g:53154)}

\bibitem[FS98]{FS:98}
R.~Fintushel and R.~Stern, \emph{Knots, links, and $4$-manifolds}, Invent.
  Math. \textbf{134} (1998), no.~2, 363--400. \MR{99j:57033}

\bibitem[FS04]{FS:2004}
\bysame, \emph{Families of simply connected 4-manifolds with the same
  {S}eiberg-{W}itten invariants}, Topology \textbf{43} (2004), no.~6,
  1449--1467. \MR{MR2081432 (2005d:57044)}

\bibitem[GS99]{GS:99}
R.~Gompf and A.~Stipsicz, \emph{$4$-manifolds and {K}irby calculus}, American
  Mathematical Society, Providence, RI, 1999. \MR{2000h:57038}

\bibitem[Gur04]{Gurtas:2004}
Y.~Gurtas, \emph{Positive {D}ehn twist expressions for some new involutions in
  mapping class group}, 2004, arXiv:math.GT/0404310.

\bibitem[Har82]{Harer:82}
J.~Harer, \emph{How to construct all fibered knots and links}, Topology
  \textbf{21} (1982), no.~3, 263--280. \MR{MR649758 (83e:57007)}

\bibitem[Hum79]{Humphries:79}
S.~Humphries, \emph{Generators for the mapping class group}, Topology of
  low-dimensional manifolds (Proc. Second Sussex Conf., Chelwood Gate, 1977),
  Lecture Notes in Math., vol. 722, Springer, Berlin, 1979, pp.~44--47.
  \MR{MR547453 (80i:57010)}

\bibitem[Kas80]{Kas:80}
A.~Kas, \emph{On the handlebody decomposition associated to a {L}efschetz
  fibration}, Pacific J. Math. \textbf{89} (1980), no.~1, 89--104. \MR{MR596919
  (82f:57012)}

\bibitem[Kor01]{Korkmaz:2001}
M.~Korkmaz, \emph{Noncomplex smooth 4-manifolds with {L}efschetz fibrations},
  Internat. Math. Res. Notices (2001), no.~3, 115--128. \MR{MR1810689
  (2001m:57036)}

\bibitem[Mat96]{Matsumoto:96}
Y.~Matsumoto, \emph{Lefschetz fibrations of genus two---a topological
  approach}, Topology and Teichm\"uller spaces (Katinkulta, 1995), World Sci.
  Publishing, River Edge, NJ, 1996, pp.~123--148.

\bibitem[PY09]{Park-Yun:2009}
J. Park and K.-H. Yun, \emph{Nonisomorphic {L}efschetz fibrations on knot
  surgery 4-manifolds}, Math. Ann. \textbf{345} (2009), no.~3, 581--597.
  \MR{2534109 (2010j:57027)}

\bibitem[PY11]{Park-Yun:2011}
\bysame, \emph{Lefschetz fibration structures on knot surgery 4-manifolds},
  Michigan Math. J. \textbf{60} (2011), no.~3, 525--544. \MR{2861087}

\bibitem[QW79]{Quach_Weber:79}
C{\^a}m~V{\^a}n Quach and C.~Weber, \emph{Une famille infinie de noeuds
  fibr\'es cobordants \`a z\'ero et ayant m\^eme polyn\^ome d'{A}lexander},
  Comment. Math. Helv. \textbf{54} (1979), no.~4, 562--566. \MR{81a:57012}

\bibitem[Sm98]{Smith:98}
Smith, I.: Symplectic geometry of {L}efschetz fibrations.
\newblock Ph.D. thesis, University of Oxford (1998)

\bibitem[St78]{Stallings:78}
J.~Stallings, \emph{Constructions of fibred knots and links}, Algebraic and
  geometric topology (Proc. Sympos. Pure Math., Stanford Univ., Stanford,
  Calif., 1976), Part 2, Proc. Sympos. Pure Math., XXXII, Amer. Math. Soc.,
  Providence, R.I., 1978, pp.~55--60. \MR{80e:57004}

\bibitem[Yun06]{Yun:2006}
K.-H. Yun, \emph{On the signature of a {L}efschetz fibration coming from an
  involution}, Topology Appl. \textbf{153} (2006), no.~12, 1994--2012.
  \MR{MR2237592}

\bibitem[Yun08]{Yun:2008}
\bysame, \emph{Twisted fiber sums of {F}intushel-{S}tern's knot surgery
  4-manifolds}, Trans. Amer. Math. Soc. \textbf{360} (2008), no.~11,
  5853--5868. \MR{MR2425694}

\end{thebibliography}
\end{document}